\newtheorem{prop}{Proposition}[section]
\newtheorem{defn}{Definition}[section]
\newtheorem{conj}{Conjecture}[section]
\newtheorem{lemma}{Lemma}[section]
\newtheorem{cor}{Corollary}[section]
\newtheorem{theorem}{Theorem}[section]
\newtheorem{remark}{Remark}[section]
\newcommand{\eps}{\epsilon}
\newcommand{\ZZ}{\mathbb{Z}}
\newcommand{\RR}{\mathbb{R}}
\newcommand{\QQQ}{\mathcal{Q}}
\newcommand{\tubes}{\mathbb{T}}
\newcommand{\dist}{\operatorname{dist}}
\newcommand{\ofine}{\operatorname{fine}}
\newcommand{\ocoarse}{\operatorname{coarse}}
\newcommand{\gain}{1/40}
\newcommand{\gainmm}{\frac{1}{40}}
\title{Polynomial Wolff axioms and Kakeya-type estimates in $\RR^4$ }
\author{
Larry Guth\thanks{Massachusetts Institute of Technology, Cambridge, MA,  {\sl lguth@math.mit.edu}.}
\and
Joshua Zahl\thanks{University of British Columbia, Vancouver, BC,  {\sl jzahl@math.ubc.ca}.}
}
\begin{document}
\maketitle
\begin{abstract}
We establish new linear and trilinear bounds for collections of tubes in $\mathbb{R}^4$ that satisfy the polynomial Wolff axioms. In brief, a collection of $\delta$--tubes satisfies the Wolff axioms if not too many tubes can be contained in the $\delta$--neighborhood of a plane. A collection of tubes satisfies the polynomial Wolff axioms if not too many tubes can be contained in the $\delta$--neighborhood of a low degree algebraic variety. 

First, we prove that if a set of $\delta^{-3}$ tubes in $\mathbb{R}^4$ satisfies the polynomial Wolff axioms, then the union of the tubes must have volume at least $\delta^{1-\gain}$. We also prove a more technical statement which is analogous to a maximal function estimate at dimension $3+\gain$.  Second, we prove that if a collection of $\delta^{-3}$ tubes in $\mathbb{R}^4$ satisfies the polynomial Wolff axioms, and if most triples of intersecting tubes point in three linearly independent directions, then the union of the tubes must have volume at least $\delta^{3/4}$. Again, we also prove a slightly more technical statement which is analogous to a maximal function estimate at dimension $3+1/4$. 

We conjecture that every Kakeya set satisfies the polynomial Wolff axioms, but we are unable to prove this. If our conjecture is correct, it implies a Kakeya maximal function estimate at dimension $3+\gain$, and in particular this implies that every Kakeya set in $\RR^4$ must have Hausdorff dimension at least $3+\gain$. This would be an improvement over the current best bound of 3, which was established by Wolff in 1995.
\end{abstract}

\section{Introduction}
A Kakeya set in $\RR^n$ is a compact subset of $\RR^n$ that contains a unit line segment pointing in every direction. The Kakeya conjecture asserts that every Kakeya set in $n$ dimensions must have Hausdorff dimension $n$. In $\RR^2$, the conjecture was solved by Davies \cite{Davies}. In three and higher dimensions the conjecture remains open, though there has been partial progress. See \cite{W2, KT} for a survey of progress on the Kakeya problem. 

A key step when proving Kakeya estimates is to first discretize the problem. After this step has been performed, the Kakeya set is replaced by a finite set of ``$\delta$--tubes'' ($\delta$ neighborhoods of line segments), which point in $\delta$--separated directions; here $\delta>0$ is a small parameter. The Kakeya problem is transformed into the question of estimating the volume of the union of these tubes (or more precisely, the union of certain subsets of these tubes, which are known as ``shadings'' of the tubes). 

In \cite{W}, Wolff proved that every Kakeya set and every Nikodym set (a closely related object) in $\RR^n$ must have Hausdorff dimension at least $\frac{n+2}{2}$. To handle both Kakeya sets and Nikodym sets simultaneously, Wolff considered a more general type of set that satisfied the ``Wolff axioms''; both Kakeya and Nikodym sets satisfy these axioms. A set of $\delta$--tubes is said to satisfy the Wolff axioms if the cardinality of the set of $\delta$ tubes is $\delta^{1-n}$, and at most $t/\delta$ tubes can be contained in the intersection of the $t$--neighborhood of a line with the $\delta$-neighborhood of a plane. Every set of $\delta$ tubes that point in $\delta$-separated directions obeys the Wolff axioms.

In three dimensions, it is conjectured that the union of any set of tubes satisfying the Wolff axioms must have volume close to 1 (for reference, if the tubes were disjoint, then their union would have volume roughly 1).  This is a deep conjecture which would imply the Kakeya conjecture in $\RR^3$. In four and higher dimensions, however, the Wolff axioms are not sufficient to force the total volume to be close to 1.  For instance, Wolff's bound asserts that the union of any set of tubes satisfying the Wolff axioms in $\RR^4$ must have volume at least $\delta$, and this is in fact best possible---the set of tubes lying near a quadric hypersurface in $\RR^4$ satisy the Wolff axioms, but the union of these tubes has volume $\delta$.  The above example suggests that in four and higher dimensions, Wolff's axioms should be extended to not only forbid many tubes from lying near a plane, but to also forbid many tubes from lying near a low degree algebraic variety.  We define a polynomial version of the Wolff axioms in this spirit.  The precise definition is given in Definition \ref{genWolffDefn} below.

We conjecture that if a set of $\delta$-tubes in $\RR^n$ obeys the polynomial Wolff axioms, then the union of the tubes has volume close to 1.  In this paper, we study the four-dimensional case, and we prove that a set of tubes obeying the polynomial Wolff axioms obeys stronger estimates than a set of tubes that only obeys the regular Wolff axioms.  We prove that the union of any set of $\delta^{-3}$ tubes in $\RR^4$ that satisfy the polynomial Wolff axioms must have volume at least $\delta^{1-\gain+\eps}$. A key ingredient is a new trilinear Kakeya-type bound in $\RR^4$. We believe this bound may be of independent interest. To establish the trilinear bound, we use a ``grains decomposition'' lemma for Kakeya-type sets in $\RR^n$, which is related to the grains decomposition from \cite{Guth}. This lemma says that if the union of a set of $\delta$--tubes in $\RR^n$ has small volume, then this arrangement of tubes must have algebraic structure. More precisely, the union of tubes can be covered by the $\delta$--neighborhoods of pieces of algebraic varieties, which are called ``grains.''

To state our results precisely, we will first need several definitions. Throughout the paper, we will assume that all points, sets, etc.~are contained in the ball centered at the origin of radius 2. 

\begin{defn}
A \emph{$\delta$--tube} is the $\delta$--neighborhood of a unit line segment (recall that $\delta$--tubes, like all objects in this proof, must be contained in $B(0,2)$). \end{defn}

\begin{defn}
A \emph{semi-algebraic set} is a set of the form 
\begin{equation}\label{semiAlgDef}
S=\{x\in\RR^n\colon P_1(x)=0,\ldots,P_{k}(x)=0,\ Q_1(x)>0,\ldots,Q_{\ell}(x)>0\},
\end{equation}
 where $P_1,\ldots,P_k,Q_1,\ldots,Q_\ell$ are polynomials. We define the complexity of $S$ to be  $\min \big(\deg(P_1)+\ldots+\deg(P_k)+\deg(Q_1)+\ldots+\deg(Q_\ell)\big)$, where the minimum is taken over all representations of $S$ of the form \eqref{semiAlgDef}
\end{defn}
\begin{defn}\label{genWolffDefn}
Let $\tubes$ be a set of $\delta$--tubes. We say that $\tubes$ satisfies the \emph{polynomial Wolff axioms} if for every semi-algebraic set $S\subset\RR^n$ of complexity at most $E$, and every $\delta\leq\lambda\leq 1$, 
\begin{equation}\label{generalizedWolffIneq}
\big|\big\{T\in\tubes:\ |T\cap S|\geq\lambda|T| \big\}\big| \leq K_E |S| \delta^{1-n}\lambda^{-n}.
\end{equation}
Here $|\{\ldots\}|$ denotes the numbers of tubes (i.e.~counting measure), while $|S|$ denotes the Lebesgue measure of $S$.
\end{defn}

Note that \eqref{generalizedWolffIneq} is only meaningful if $S$ has positive Lebesgue measure, i.e.~if the semi-algebraic set $S$ has dimension $n$. The following remark should give some intuition for what it means to satisfy the polynomial Wolff axioms. 

\begin{remark}\label{implicationsOFWolffAxioms}
Let $\tubes$ be a set of $\delta$--tubes in $\RR^n$ that satisfy the polynomial Wolff axioms. Then the following properties hold:
\begin{itemize}
\item The tubes in $\tubes$ are essentially distinct: If $T\in\tubes$, then at most $C$ tubes from $\tubes$ are contained in the $10\delta$ neighborhood of $T$ (denoted $N_{10\delta}(T)$), where $C$ depends only on the constants $K_E,\ E=1,\ldots,6$ from Definition \ref{genWolffDefn}. The reason that $C$ only depends on $K_E$ for $E=1,\ldots,6$ is that the set $N_{10\delta}(T)$ is a semi-algebraic set of complexity $\leq 6$. Often, we will not worry about the exact complexity of the semi-algebraic sets we encounter, so we will replace the number 6 by $O(1)$. 
\item More generally, at most $Ct_1\cdots t_{n-1}\delta^{1-n}$ tubes are contained in a rectangular prism of dimensions $1\times t_1\times\ldots\times t_{n-1}$, where $C$ depends only on the constants $K_E,\ E=1,\ldots,O(1)$ from Definition \ref{genWolffDefn}. Any set of tubes $\tubes$ with the property that at most $Ct_1\cdots t_{n-1}\delta^{1-n}$ tubes are contained in a rectangular prism of dimensions $1\times t_1\times\ldots\times t_{n-1}$ is said to satisfy the \emph{linear Wolff axioms}\footnote{Note that Wolff's original axioms only required that at most $t/\delta$ tubes are contained in a rectangular prism of dimensions $1\times t_1\times \delta\times\ldots\times\delta$. Thus the linear Wolff axioms are more restrictive than the original Wolff axioms. Unlike the original Wolff axioms, however, the linear Wolff axioms are preserved by re-scalings.}.
\item At most $C\delta^{2-n}$ tubes are contained in the $M\delta$--neighborhood of an algebraic hypersurface $Z(P)$, where $C$ depends only on $M$ and the constant $K_{E}$ from \eqref{generalizedWolffIneq} with $E=2\deg P$.
\item More generally, if $B$ is a ball of radius $r$ then at most $C(\delta/r)\delta^{2-n}$ tubes satisfy $T\cap B\subset N_{C\delta}(Z(P))$.
\item If $Z$ is a $\ell$-dimensional algebraic variety, then at most $C\delta^{1-\ell}$ tubes are contained in the $M\delta$ neighborhood of $Z$, where again $C$ depends only on $M$ and and the constants $K_E$ from \eqref{generalizedWolffIneq}, with $E=1,\ldots,O_{\deg(Z)}(1)$. 
\item More generally, if $B$ is a ball of radius $r$ and $Z$ is an $\ell$-dimensional  variety, then at most $C(\delta/r)^{\ell-n}\delta^{1-\ell}$ tubes satisfy $T\cap B\subset N_{M\delta}(Z)$.
\item Let $\delta<\rho\leq 1$. If we only consider those tubes lying in the $\rho$--neighborhood of a line segment (we will call this set a cylinder), and if we re-scale this cylinder to have dimensions $1\times 1\times\ldots\times 1$, then the re-scaled tubes will satisfy the polynomial Wolff axioms at scale $\delta/\rho$ $\phantom{}$\footnote{Technically this is a lie, since the re-scaled tubes won't actually be $\delta/\rho$ tubes. However, the rescaled tubes are contained in $C\delta/\rho$ tubes, so the issue is easily fixable.}. Furthermore, if $\{K_E^\prime\}$ are the constants for the (rescaled) set of tubes, then $K_E^\prime\lesssim K_E$ for each $E$, where the implicit constant is independent of $E$. 
\end{itemize}
\end{remark}

\begin{defn}\label{lessapproxDefn}
We say $A\lesssim B$ if $A\leq CB$. Here and throughout the paper, $C$ will denote a constant (independent of $\delta$) that is allowed to change from line to line. Numbered constants $C_0,C_1,$ etc.~will have specific meanings, and won't be allowed to change. While $C$ can be any constant, we will think of it as being large. We will use $c$ to denote a small (positive) constant, which is also allowed to change from line to line.

We say $A\lessapprox B$ if for every $\epsilon>0$, there exists a constant $C_{\epsilon}$ (independent of $\delta$) so that $A\leq C_\epsilon\delta^{-\epsilon}B$. 
\end{defn}

\begin{conj}\label{polyWolffConj} (Kakeya conjecture for the polynomial Wolff axioms) For every dimension $n$, there is a complexity $E$ so that the following holds.  If $\tubes$ is a set of $\delta$-tubes in $\RR^n$ obeying the polynomial Wolff axioms for semi-algebraic sets of complexity at most $E$, then

$$ \Big\| \sum_{T \in \tubes} \chi_T \Big\|_{p} \lessapprox 1, \textrm{ for } p = \frac{n}{n-1}. $$

This would imply that

$$ \Big| \bigcup_{T \in \tubes} T \Big| \gtrapprox 1. $$

\end{conj}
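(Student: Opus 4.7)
My plan would be to combine polynomial partitioning with induction on scales, exploiting the fact that the polynomial Wolff axioms are preserved under the rescaling that appears in the inductive step (last bullet of Remark \ref{implicationsOFWolffAxioms}) and give exceptionally strong control on tubes concentrated near low-degree algebraic varieties (third and fifth bullets). First I would reduce the $L^{n/(n-1)}$ bound to a family of super-level set estimates: it suffices to show $|X_\mu| \lessapprox \mu^{-n/(n-1)}$ for each dyadic $\mu \in [1, \delta^{1-n}]$, where $X_\mu = \{x \in B(0,2) : \sum_{T \in \tubes}\chi_T(x) \geq \mu\}$. Summing these bounds in $\mu$ recovers the desired $L^p$ estimate up to a logarithmic factor that is absorbed by $\lessapprox$.

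For a fixed $\mu$, I would apply polynomial partitioning at a degree $D = D(\mu,\delta)$ to be chosen, producing a polynomial $P$ of degree $D$ whose zero set divides $B(0,2)$ into $\sim D^n$ open cells, each meeting $X_\mu$ in volume $\sim |X_\mu|/D^n$. Each $T \in \tubes$ is then either (a) an \emph{algebraic tube}, essentially contained in $N_{C\delta}(Z(P))$, or (b) a \emph{cellular tube} entering at most $D+1$ cells by B\'ezout's theorem. The polynomial Wolff axioms bound the number of algebraic tubes by $\lesssim_D \delta^{2-n}$ and $|N_{C\delta}(Z(P))| \lesssim D\delta$, which makes their contribution to the $L^p$ norm negligible. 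For the cellular tubes, within each cell I would rescale to a unit ball; the rescaled tubes satisfy the polynomial Wolff axioms at the coarser scale $\sim \delta D$ with comparable constants, so the inductive hypothesis applies. Summing over the $\sim D^n$ cells and choosing $D$ to balance the two contributions should in principle close the induction on $\delta$.

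The main obstacle, and the reason the conjecture remains open, is closing the induction exactly at the critical exponent $p = n/(n-1)$. The polynomial partitioning argument is scale-invariant only at the endpoint, so any sub-polynomial loss per scale---from cellular tubes crossing multiple cells, from boundary layers near $Z(P)$, or from interpolating back to $L^p$---accumulates through $\log(1/\delta)$ iterations into a polynomial loss that destroys the bound. This is precisely the obstruction that forces the present paper to settle for the exponent $3 + 1/28$ in $\RR^4$ instead of the conjectural $4/3$. A proof of the full conjecture would likely require a new sharp multilinear Kakeya estimate adapted to the polynomial Wolff axioms, one that gains whenever transversality fails because the offending tubes must then cluster near a low-degree variety, together with a two-ends or bush-type reduction to enforce this dichotomy. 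Absent such an ingredient, the induction appears to lose at every scale, and the critical exponent stays out of reach.
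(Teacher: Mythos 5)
The statement you were given is a \emph{conjecture}, not a theorem: the paper explicitly labels it as such, offers no proof, and in fact the entire point of the paper is to establish strictly weaker partial results (Theorems \ref{mainThm} and \ref{3linKakeyaR4}) in the special case $n=4$. So there is no ``paper's own proof'' to compare against, and you are right not to claim one. Your write-up is an honest sketch of the natural polynomial-partitioning-plus-induction-on-scales strategy together with a correct diagnosis of why it does not close at the critical exponent $p = n/(n-1)$; this is consistent with the paper's own perspective, which circumvents the endpoint obstruction by settling for a non-sharp exponent and bringing in the grains decomposition (Proposition \ref{existsGrainsDecomp}) and the trilinear estimate (Theorem \ref{3linKakeyaR4}) as the extra ingredients that let an induction with losses still beat Wolff's $\frac{n+2}{2}$ bound.

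Two small remarks on your sketch. First, you compare ``the exponent $3+1/28$'' to ``the conjectural $4/3$''; these live on different scales --- $3+\gain$ is the dimension exponent, whose dual $L^p$ exponent is $85/57$, while $4/3$ is the conjectural $L^p$ exponent dual to dimension $4$ --- so the comparison should be either $3+\gain$ versus $4$ or $85/57$ versus $4/3$. Second, the step ``within each cell I would rescale to a unit ball'' elides a genuine difficulty: the cells produced by polynomial partitioning are not comparable to balls, and the restriction of a tube to a cell is not a tube, so making the inductive hypothesis apply cleanly cell-by-cell is itself nontrivial. The paper sidesteps this by not inducting directly on the partition but instead iterating the partitioning to extract grains (low-dimensional algebraic pieces) and then feeding those into the multilinear Kakeya machinery of Bourgain--Guth, which is a meaningfully different way of exploiting the algebraic structure than a naive cell-by-cell rescaling.
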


Before this paper, the only known result about the polynomial Wolff axioms is Wolff's original result from \cite{W}:
if $\tubes$ obeys the original Wolff axioms then

$$ \Big\| \sum_{T \in \tubes} \chi_T \Big\|_{p} \lessapprox 1,\quad \textrm{ for } p = \frac{n + 2}{n}.$$

This maximal function bound implies that Kakeya and Nikodym sets have Hausdorff dimension at least $\frac{n+2}{2}$.  In this paper, we prove some stronger estimates in the four-dimensional case.

\subsection{Linear Kakeya-type bounds in $\RR^4$}

Our first result is a maximal function estimate for sets of tubes that satisfy the polynomial Wolff axioms.

\begin{theorem}\label{mainThm}
Let $\tubes$ be a set of $\delta$--tubes in $\RR^4$ that satisfy the polynomial Wolff axioms. Then
\begin{equation}\label{maximalFnBd}
\Big\Vert\sum_{T\in\tubes}\chi_T\Big\Vert_{d/(d-1)} \lessapprox \big(\frac{1}{\delta}\big)^{n/d - 1},\quad d = 3 + \gainmm.
\end{equation}
 \end{theorem}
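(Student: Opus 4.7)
The plan is to reduce Theorem \ref{mainThm} to the trilinear Kakeya-type bound in $\RR^4$ (Theorem \ref{3linKakeyaR4}) by a broad/narrow decomposition in direction space. The gain over Wolff's classical $(n+2)/n$ exponent would come from the extra strength of the polynomial (as opposed to linear) Wolff axioms, which provide nontrivial bounds for tubes in the $\delta$-neighborhood of low-degree algebraic varieties, not merely of slabs.

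First I would pigeonhole dyadically: fix a multiplicity $\mu$ and a superlevel set $E_\mu = \{x : \sum_T \chi_T(x) \sim \mu\}$ of measure $V$, reducing matters to an estimate of the form $\mu^{85/57} V \lessapprox \delta^{-27/57}$. Next, introduce an angular scale $\alpha$ (to be optimized) and call $x \in E_\mu$ \emph{broad} if the $\sim \mu$ tube-directions through $x$ contain a triple of pairwise $\alpha$-separated directions spanning a $3$-plane of angular thickness $\geq \alpha$, and \emph{narrow} otherwise (so the directions lie in the $\alpha$-neighborhood of a $2$-plane in $S^3$, and the corresponding tubes lie in an $\alpha$-thick slab around some $3$-plane through $x$).

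On the broad part I would pigeonhole three direction caps spanning a $3$-plane and apply Theorem \ref{3linKakeyaR4} to the sub-configurations; a Bennett--Carbery--Tao style interpolation converts the trilinear $\delta^{3/4}$ bound into a linear estimate matching dimension $3+1/4$, up to an $\alpha^{-O(1)}$ loss. On the narrow part, I would cover $E_\mu^{\textrm{narrow}}$ by such $\alpha$-slabs; the second bullet of Remark \ref{implicationsOFWolffAxioms} bounds the tubes in each slab, and rescaling a slab to a unit cube (which preserves the polynomial Wolff axioms by the final bullet of the same remark) reduces the per-slab problem to an essentially $3$-dimensional Kakeya-type problem at scale $\delta/\alpha$, to which Wolff's original $L^{5/3}$ bound applies. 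Summing over slabs and optimizing $\alpha$ to balance the broad and narrow estimates should yield the gain $\gain = 1/28$ and the claimed $L^{85/57}$ bound.

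The main obstacle I anticipate is the narrow case. The slab direction depends on $x$, so the cover is not a bounded family of fixed slabs, and one must either pigeonhole the slab direction (likely at a loss that washes out the gain) or, more promisingly, replace the slab cover by a semi-algebraic grains-type cover built from the polynomial Wolff axioms — using the third, fifth and sixth bullets of Remark \ref{implicationsOFWolffAxioms} in place of the crude linear count. A second source of difficulty is the careful tracking of the $\alpha$-dependence through both the broad and narrow estimates: the specific exponent $27/85$ arises from a delicate balance between two competing losses, so any lossy intermediate step would fail to recover it.
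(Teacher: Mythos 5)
Your broad/narrow split in direction space is in the right spirit and your treatment of the broad part (pigeonhole caps, apply Theorem \ref{3linKakeyaR4}, translate back to a linear bound) does match the paper's use of Corollary \ref{volumeTrilinearCor}. But the narrow part contains the main difficulty, and there your plan diverges from what the paper does and, as written, would not produce the gain.

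First, a geometric mix-up: the trilinear estimate \eqref{trilinearBoundFixedP} carries the weight $|v_1\wedge v_2\wedge v_3|$, and this degenerates precisely when the directions through a point lie close to a \emph{2-dimensional} subspace, i.e.\ when the tubes lie in the $\theta$-neighborhood of a $2$-plane. The narrow case you need is therefore ``tubes near a $2$-plane through $x$'' (the paper's ``$\theta$-planiness,'' Section \ref{proofOfMainThmSection}), a set shaped like $1\times1\times\theta\times\theta$, not an $\alpha$-thick slab around a $3$-plane. With your $3$-plane dichotomy the broad/narrow split does not line up with when Theorem \ref{3linKakeyaR4} is actually useful.

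Second and more fundamentally: in the narrow case you propose to cover by slabs, rescale, and apply Wolff's classical bound at scale $\delta/\alpha$. This cannot close the argument, because Wolff's bound has no $\theta$-dependent gain: applying it after rescaling gives back (at best) the exponent $3$ that Wolff's argument already gives, independent of $\alpha$. The entire point of Section \ref{plainyTubesSec} is to prove a \emph{new} estimate, Theorem \ref{plainyTubesBound}, for plainy collections satisfying the linear Wolff axioms, which gains a factor of $\theta^{-1/6}$ over Wolff. This is proved not by rescaling to a lower-dimensional problem but by a two-scale analysis (fine scale $\delta$ vs.\ coarse scale $\theta$ -- Sections \ref{scaleThetaAnalysis} and following) combined with a ``small hairbrush'' volume estimate (Proposition \ref{smallHairbrushProp}), which exploits that the coarse hairbrush of each fat tube lies in a union of $2$-planes of small volume. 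It is precisely the competition between this $\theta_0^{-1/6}\delta$ plainy bound and the $\theta_0\delta^{3/4}$ trilinear bound, with $\theta_0=(\delta/\lambda)^{3/14}$, that yields $\gain=1/28$. Without a $\theta$-improving narrow-case estimate, your optimization in $\alpha$ has nothing to balance against and will degenerate to the trivial exponent.

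Two further technical remarks. The rescaling you invoke (``the final bullet of Remark \ref{implicationsOFWolffAxioms}'') is stated for \emph{cylinders} (thin neighborhoods of line segments), not for slabs: under an anisotropic rescaling of a $1\times1\times\theta\times\theta$ plank, the $\delta$-tubes become anisotropic planks rather than $\delta/\theta$-tubes, so Wolff's theorem would not directly apply even if it sufficed. Also, the paper performs a two-ends reduction and a robust-transversality reduction (Section \ref{rescalingArgumentsSec}) before the plainy/trilinear dichotomy; both are needed to make the dyadic pigeonholing and the application of Corollary \ref{volumeTrilinearCor} legitimate, and are not captured by the single pigeonholing step in your outline.
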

See Proposition \ref{messyMainProp} for a slightly messier and more technical version of Theorem \ref{mainThm} that describes the implicit constant in \eqref{maximalFnBd} in greater detail. In particular, Proposition \ref{messyMainProp} explains how the implicit constant in \eqref{maximalFnBd} depends on the constants $\{K_E\}$ appearing in Definition \ref{genWolffDefn}.

Theorem \ref{mainThm} should be thought of as a maximal function bound of dimension $3+\gain$. In particular, Theorem \ref{mainThm} gives us a lower bound on the volume of unions of tubes satisfying the polynomial Wolff axioms.

\medskip

\noindent \emph{Added April 9, 2019:} In a previous version of this manuscript, a variant of Theorem \ref{mainThm} was claimed with exponent $d=3+1/28$ instead of $d=3+\gain$. The previous version of the manuscript contained an error (an exponent was not propagated properly from \eqref{WolffBoundOnTubes} to \eqref{pointwiseMuBdInsideTube}). This has now been corrected. 

\begin{cor}
Let $\tubes$ be a set of $\delta^{-3}$ $\delta$--tubes in $\RR^4$ that satisfy the polynomial Wolff axioms. Then
$$
\Big|\bigcup_{T\in\tubes} T\Big|\gtrapprox \delta^{1-\gain}.
$$
\end{cor}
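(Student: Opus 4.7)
The plan is to deduce this directly from Theorem \ref{mainThm} via Hölder's inequality, treating the maximal function bound as a statement about the ``dimension'' of the union of tubes. The intuition is that the exponent $85/57$ in Theorem \ref{mainThm} was chosen precisely as the Hölder dual of $85/28 = 3+\gain$, and the factor $\delta^{-27/85}$ on the right-hand side is exactly what one would predict from an extremal configuration in which the tubes pile up on a set of volume $\delta^{1-\gain}$ with uniform multiplicity. The corollary is a formalization of this heuristic.

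To carry this out, set $f = \sum_{T\in\tubes}\chi_T$ and $U = \bigcup_{T\in\tubes} T$. Each $\delta$-tube in $\RR^4$ has $|T|\approx \delta^{3}$, and the polynomial Wolff axioms (via the essential-distinctness property in Remark \ref{implicationsOFWolffAxioms}) together with $|\tubes| = \delta^{-3}$ give
$$\int_{\RR^4} f \;=\; \sum_{T\in\tubes}|T|\;\approx\; 1.$$
Since $f$ is supported on $U$, Hölder's inequality with conjugate exponents $p = 85/57$ and $p' = 85/28$ yields
$$1\;\lesssim\;\int_U f\;\leq\;\|f\|_{85/57}\,|U|^{28/85}.$$
Inserting the bound $\|f\|_{85/57}\lessapprox \delta^{-27/85}$ from Theorem \ref{mainThm} gives $|U|^{28/85}\gtrapprox \delta^{27/85}$, and raising both sides to the $85/28$ power produces
$$|U|\;\gtrapprox\;\delta^{27/28}\;=\;\delta^{1-\gain},$$
which is exactly the claim.

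The only obstacle in the argument is buried in Theorem \ref{mainThm} itself; the corollary requires nothing beyond the Hölder inequality and the observation that the $\lessapprox$ notation of Definition \ref{lessapproxDefn} is preserved under a bounded number of algebraic manipulations (multiplying by $|U|^{28/85}$ and raising to a fixed positive power). Consequently there is no genuinely new mathematical content to produce here: the arithmetic of the exponents $57/85$, $28/85$, $27/85$, and $27/28 = 1-\gain$ is arranged so that the deduction is automatic.
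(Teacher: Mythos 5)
Your proof is correct, and it is the standard H\"older-duality deduction that the authors clearly intend (the paper states the corollary without proof, so there is nothing to compare it against beyond the implicit argument). One small clarification: you do not actually need the essential-distinctness property from Remark~\ref{implicationsOFWolffAxioms} to get $\int f \gtrsim 1$; the identity $\int f = \sum_{T} |T| \approx \delta^3 |\tubes| = 1$ uses only the hypothesis $|\tubes| = \delta^{-3}$ and the fact that each $\delta$-tube in $\RR^4$ has volume $\approx \delta^3$.
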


Theorem \ref{mainThm} does not tell us anything about the Kakeya conjecture, because we do not know whether every direction-separated set of tubes satisfies the polynomial Wolff axioms. However, we conjecture that this should be the case.

\begin{conj}\label{KakeyaWolffConj}
Every set of tubes pointing in $\delta$--separated directions satisfies the polynomial Wolff axioms. More precisely, if $\tubes$ is a set of tubes pointing in $\delta$--separated directions, then $\tubes$ satisfies \eqref{generalizedWolffIneq} with constants $\{K_E\}$ that are independent of $\delta$.
\end{conj}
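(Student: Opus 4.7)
The plan is to reduce the conjecture to a purely algebraic statement about varieties, and then to attack that by induction on the dimension of the variety. First, I would reduce to the case $\lambda=1$ by a dyadic pigeonhole and rescaling argument: if $|T\cap S|\geq\lambda|T|$, then rescaling the picture by a factor of $\lambda^{-1}$ turns each such tube into one essentially contained in a rescaled semi-algebraic set of the same complexity, and $\delta$-separated directions remain $\delta/\lambda$-separated. Next, using Tarski--Seidenberg / cylindrical algebraic decomposition, one can partition $S$ into $O_E(1)$ semi-algebraic cells of bounded complexity, each contained in the $O(\delta)$-neighborhood of an algebraic variety whose dimension equals the dimension of the cell. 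It therefore suffices to prove the following: for every $k$-dimensional irreducible algebraic variety $V\subset\RR^n$ of degree $\leq D$, the number of direction-separated $\delta$-tubes contained in the $C\delta$-neighborhood of $V$ is at most $K(D,n)\,\delta^{1-k}$.

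I would try to prove this inductively on $k$. The case $k=1$ is essentially trivial: an irreducible curve of degree $\leq D$ admits at most $O(D)$ distinct tangent directions at a generic point, so direction-separation caps the count. For the inductive step, a tube $T\subset N_{C\delta}(V)$ is approximately tangent to $V$ at every point it passes through, so its direction lies in $T_pV$ for each relevant $p$. Parametrizing the pairs $(p,v)$ with $v\in T_pV$ gives a $2k$-dimensional algebraic set, and projecting to the direction space one would expect a generic fiber of dimension $k-1$. Combined with $\delta$-separated directions, this expected fiber dimension yields exactly the target bound $\delta^{1-k}$. When the fiber is too large over some direction, the offending tubes must cluster inside a proper subvariety $V'\subsetneq V$, which has strictly smaller dimension or degree, and we close by applying the inductive hypothesis to $V'$.

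The hard part will be the case in which $V$ is \emph{ruled}: for instance, if $V$ is swept out by a continuous family of $(k-1)$-dimensional affine flats whose directions are not $\delta$-separated from each other, then the Gauss map degenerates and the fiber-dimension count above collapses. To handle this, one would need a structural dichotomy --- in the spirit of the structure theorems for the joints problem, for lines in algebraic surfaces, and for varieties containing many flats --- saying that either the Gauss map is non-degenerate enough for the previous paragraph to succeed, or else $V$ is an iterated ruling whose contained lines cluster inside a proper subvariety accessible to the inductive hypothesis. Making this dichotomy quantitative with constants depending only on $\deg V$, so that the resulting $K_E$ in \eqref{generalizedWolffIneq} is independent of $\delta$, is the real obstacle; this appears to be exactly the difficulty that forces the authors to leave the statement as a conjecture rather than a theorem.
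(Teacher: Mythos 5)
This statement is Conjecture~\ref{KakeyaWolffConj}, and the paper provides no proof of it. The authors explicitly state they are ``unable to prove this'' in Section~\ref{minimalConditions}, where they isolate the essential difficulty as the statement that at most $O(\delta^{-2})$ direction-separated $\delta$-tubes can lie in the $O(\delta)$-neighborhood of a bounded-degree hypersurface in $\RR^4$ (and note that the finite-field analogue is easy while the Euclidean version is not). So there is no paper proof for your proposal to be compared against.

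As for the proposal itself: you have correctly recognized that it is not a proof. The reduction to the case of irreducible $k$-dimensional varieties and the target bound $O_D(\delta^{1-k})$ is the right formulation (it matches the fifth bullet of Remark~\ref{implicationsOFWolffAxioms}), and your identification of the degenerate-Gauss-map / ruled-variety case as the obstruction is consistent with the paper's own diagnosis. But the inductive step is precisely the content you cannot supply: you assert that when a direction-fiber is large ``the offending tubes must cluster inside a proper subvariety $V'\subsetneq V$,'' which is the whole theorem in disguise. For a ruled hypersurface the contained tubes need not concentrate on anything of lower dimension --- they fill out the ruling --- so there is no obvious inductive descent, and the $\delta$-separated-directions hypothesis does not interact with the algebraic structure in any transparent way. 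Your own final paragraph concedes exactly this: the dichotomy you need is the open problem, not a step towards solving it. A few subsidiary points in the sketch are also shaky (rescaling by $\lambda^{-1}$ does not turn a $\delta$-tube into a $\delta/\lambda$-tube inside a bounded region, and the $k=1$ base case needs a real argument that a bounded-degree curve in $B(0,2)$ is covered by $O_D(1)$ $\delta$-tubes rather than a count of ``tangent directions at a generic point''), but these are secondary to the unresolved core, which you have honestly flagged.
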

If Conjecture \ref{KakeyaWolffConj} is true, then Theorem \ref{mainThm} would imply a Kakeya maximal function estimate at dimension $3+\gain$. In particular, it would mean that every Kakeya set in $\RR^4$ must have Hausdorff dimension at least $3+\gain$. This would be a slight improvement over the previous best bound of 3 due to Wolff \cite{W}, and the related bound of \L{}aba-Tao \cite{LT} that every Kakeya set in $\RR^4$ must have upper Minkowski dimension at least $3+\epsilon_0$, where $\epsilon_0>0$ is a small absolute constant.

\begin{remark} 
Theorem \ref{mainThm} does not actually require the full strength of the polynomial Wolff axioms. The exact conditions needed for Theorem \ref{mainThm}  will be discussed further in Section \ref{minimalConditions} below.
\end{remark}

\subsection{Trilinear Kakeya-type bounds in $\RR^4$}
In \cite{BCT}, Bennett, Carbery and Tao proved that if $\tubes_1,\ldots,\tubes_n$ are sets of $\delta$--tubes in $\RR^n$, and if each tube in $\tubes_j$ makes a small angle with the $e_j$ direction, then for all $q>\frac{n}{n-1}$,
\begin{equation}\label{multiLinKakeya}
\bigg\Vert\prod_{j=1}^n \Big(\sum_{T\in\tubes_J}\chi_T\Big)\bigg\Vert_{L^{q/n}(\RR^n)}\lesssim \prod_{j=1}^n \big(\delta^{n/q}|\tubes_j|\big).
\end{equation}
The endpoint $q=\frac{n}{n-1}$ was later established by the first author in \cite{Guth2}. 

Heuristically, \eqref{multiLinKakeya} says that if $\tubes$ is a set of $\delta^{1-n}$ $\delta$-tubes in $\RR^n$, and if most $n$-tuples of intersecting tubes point in $n$ (quantitatively) linearly independent directions, then $\bigcup_{T\in\tubes}T$ has volume close to 1. In particular, if $\tubes$ is a collection of $\delta^{1-n}$ tubes for which $\bigcup_{T\in\tubes}T$ has small volume, then most of the tubes passing through a typical point in $\bigcup_{T\in\tubes}T$ must lie close to a hyperplane. 

Inequality \eqref{multiLinKakeya} deals with the situation where the number of families of tubes is the same as the dimension of the ambient Euclidean space (\eqref{multiLinKakeya} is an inequality in $\RR^n$, and there are $n$ families of tubes). However, it can be easily extended to the case where there are $\ell\leq n$ families of tubes in $\RR^n$. Heuristically, this says that if $\tubes$ is a set of $\delta^{1-n}$ $\delta$-tubes in $\RR^n$, and if most $\ell$-tuples of intersecting tubes point in $\ell$ quantitatively linearly independent directions, then $\bigcup_{T\in\tubes}T$ has volume at least $\delta^{n-\ell}$. This result is sharp: if $\delta^{1-n}$ $\delta$ tubes are placed at random into the $\delta$--neighborhood of an $\ell$-dimensional flat in $\RR^n$, then the union of these tubes has volume $\leq\delta^{n-\ell}$, and most $\ell$--tuples of intersecting tubes will point in $\ell$ (quantitatively) linearly independent directions.

Theorem \ref{3linKakeyaR4} below is a stronger inequality for three collections of tubes $\tubes_1,\tubes_2,\tubes_3$ in $\RR^4,$ if the tubes in each of $\tubes_1,\tubes_2,$ and $\tubes_3$ satisfy the polynomial Wolff axioms. Heuristically, Theorem \ref{3linKakeyaR4} says that if $\tubes$ is a set of $\delta^{-3}$ $\delta$-tubes in $\RR^4$ that satisfy the polynomial Wolff axioms, and if most triples of intersecting tubes point in three quantitatively linearly independent directions, then $\bigcup_{T\in\tubes}T$ has volume at least $\delta^{3/4}$. 

\begin{theorem}\label{3linKakeyaR4}
Let $\tubes$ be a set of $\delta$-tubes in $\RR^4$ that satisfy the polynomial Wolff axioms. Then
\begin{equation}\label{trilinearBoundFixedP}
\int\Big(\sum_{T_1,T_2,T_3\in\tubes}\chi_{T_1}\ \chi_{T_2}\ \chi_{T_3}\ |v_1\wedge v_2\wedge v_3|^{12/13}\Big)^{13/27} \lessapprox \delta^{-1/3},
\end{equation}
where in the above expression $v_i$ is the direction of the tube $T_i$.
\end{theorem}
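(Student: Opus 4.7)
The plan is to combine the grains decomposition lemma with the polynomial Wolff axioms to produce a weighted trilinear estimate whose strength is driven by the algebraic dimension in which tubes concentrate. Heuristically, the weight $|v_1\wedge v_2\wedge v_3|^{12/13}$ penalises nearly-coplanar triples, so only triples living near $3$-dimensional algebraic hypersurfaces should contribute substantially. The polynomial Wolff axioms then cap the number of tubes concentrating near such a hypersurface at roughly $\delta^{-2}$, which is better than the $\delta^{-3}$ cardinality available globally, and it is this improvement that powers the gain $\delta^{-1/3}$ over the naive $\delta^{-1}$ bound coming from the BCT inequality applied in $\RR^4$ with only three families of tubes.

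First, I would use a standard dyadic pigeonhole to reduce to a refined sub-collection $\tubes'\subset\tubes$ and a single dyadic wedge scale: triples with $|v_1\wedge v_2\wedge v_3|\sim\mu$ meeting on a set $X$ of dyadic volume $V$ on which $\sum_{T\in\tubes'}\chi_T\sim M$. Next, I would apply the grains decomposition (Proposition \ref{existsGrainsDecomp}) to cover $X$ by $\delta$-neighbourhoods of pieces of bounded-degree algebraic varieties and sort the grains by algebraic dimension $k\in\{1,2,3\}$. If three tubes meet at a point of a $k$-dimensional grain of local radius $r$, their direction vectors all lie within $O(\delta/r)$ of the grain's $k$-dimensional tangent plane, which forces $|v_1\wedge v_2\wedge v_3|\lesssim\delta/r$ whenever $k\le 2$. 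Hence, once $\mu$ is fixed, grains with $k\le 2$ or with too small a radius contribute only a controllable amount that can be bounded directly from the tube-in-variety counts recorded in Remark \ref{implicationsOFWolffAxioms}.

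The main work is on the $3$-dimensional grains. On each such grain I would straighten it via an algebraic change of coordinates and apply a $3$-dimensional multilinear Kakeya estimate to the rescaled tubes, which inherit the polynomial Wolff axioms by the final bullet of Remark \ref{implicationsOFWolffAxioms}; the wedge weight $|v_1\wedge v_2\wedge v_3|$ plays the role of the Jacobian in that inequality. Summing the per-grain bounds against the Wolff-axiom count of at most $\delta^{-2}$ tubes per grain, interpolating against the contribution from lower-dimensional grains, and optimising over the dyadic parameters $(\mu,V,M,r)$ should yield the stated estimate $\lessapprox\delta^{-1/3}$.

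The hard step will be making the grain-straightening quantitatively compatible with both the wedge weight and the Wolff axioms: one must verify that after pulling back to tangent coordinates the rescaled wedge stays comparable to the tangent-space wedge needed by the $3$-dimensional multilinear Kakeya inequality, and one must carefully bookkeep how a single tube's shading can be partitioned among several grains without overcounting. It is precisely this bookkeeping, together with the Hölder interpolation between the $3$-dimensional BCT bound and the Wolff-axiom tube count per grain, that should pin down the precise exponents $12/13$ and $13/27$ appearing in the theorem.
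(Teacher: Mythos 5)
Your proposal lands on some of the right ingredients (dyadic pigeonholing, the grains decomposition, using the polynomial Wolff axioms to cap the number of tubes near an algebraic hypersurface), but the central mechanism is different from the paper's, and one of your key claims is false as stated, which leaves a genuine gap.

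The problematic claim is this: ``if three tubes meet at a point of a $k$-dimensional grain of local radius $r$, their direction vectors all lie within $O(\delta/r)$ of the grain's $k$-dimensional tangent plane.'' A tube passing transversally through a hypersurface grain has a direction that is nowhere near the tangent plane; its connected component of intersection with the grain simply has diameter $O(\delta)$. Tangency is a property of the tube, not of the incidence. This is precisely why the paper's proof pigeonholes on the diameters $\ell_i$ of the connected components $T_i\cap G$ at the point $x$, and why it then treats the family with the longest intersection length $\ell_1$ separately. Only the tubes with $\ell_1$ large are genuinely ``trapped'' by the grain, and only for those can the polynomial Wolff axioms be applied to give the improved count $N_1 \lesssim K\,\ell_1^{-1}\delta^{-2}$. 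For the other two families the paper uses a weaker bound coming from the ``tube doesn't intersect too many grains'' property of the decomposition, \eqref{notTooManyTubesPerGrain}. Your proposal applies the $\delta^{-2}$-per-grain count symmetrically to all three families, which overcounts and does not match the final exponents; the paper uses $N_1^{1/2}=N_1^{3/8}N_1^{1/8}$, bounding the two factors by the two different estimates, and this asymmetry is essential to landing on $\delta^{-1/3}$.

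The second structural difference is that the paper does not straighten grains or apply a three-dimensional multilinear Kakeya inequality. Instead, after covering each grain by sub-grains at scale $\ell_1$, it applies the four-dimensional trilinear Kakeya inequality of Bourgain--Guth (Theorem \ref{BourgainGuthTHhm}) directly to the tubes inside each sub-grain, with the wedge factor $v_1\wedge v_2\wedge v_3$ already built in. This sidesteps the serious issues your route would face: after an algebraic change of coordinates flattening the grain, the tubes become curved (so the straight-tube multilinear Kakeya no longer applies verbatim), and as noted above many of the tubes don't lie inside the grain at all. So the ``straightening + 3D multilinear Kakeya'' step is not merely harder bookkeeping as your last paragraph suggests; it is not a correct description of the geometry, and it replaces a tool (4D Bourgain--Guth on sub-grains) that is chosen precisely to avoid these obstacles. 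To repair the argument you would need to (i) pigeonhole on the intersection lengths $\ell_i$ and pass to sub-grains at the largest scale $\ell_1$, (ii) drop the tangent-plane claim, keep everything in $\RR^4$, and apply the Bourgain--Guth inequality inside each sub-grain, and (iii) combine the per-sub-grain tube counts asymmetrically as described above.
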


See Proposition \ref{techVersion3lin} for a slightly messier and more technical version of Theorem \ref{mainThm} that describes the implicit constant in \eqref{trilinearBoundFixedP} in greater detail. In particular, Proposition \ref{techVersion3lin} explains how the constant depends on the constants $\{K_E\}$ appearing in Definition \ref{genWolffDefn}.

\section{Preliminaries}
\subsection{Shadings, the two-ends condition, and dyadic pigeonholing}
\begin{defn}[Two-ends condition]
Let $T\subset\RR^n$ be a $\delta$--tube. We call a set $Y(T)\subset T$ a \emph{shading} of $T$. We will often use the variable $\lambda$ to denote the quantity $|Y(T)|/|T|$. If $Y(T)$ satisfies the bound
\begin{equation}
|Y(T)\cap B(x,r)|\leq \alpha r^{\epsilon_0}|Y(T)|
\end{equation}
for all $x\in B(0,2)$ and all $\delta\leq r\leq 1$, then we say $Y(T)$ satisfies the \emph{two-ends condition} with exponent $\epsilon_0$ and error $\alpha$.
\end{defn}

The following lemma says that if $Y(T)$ is a shading of $T$, then we can always find a large subset of $Y(T)$ that satisfies a (re-scaled) two-ends condition. The lemma below was first used by Wolff in \cite{W}. A proof of the lemma as stated here can also be found in \cite[Lemma 6]{T2}.
\begin{lemma}\label{twoEndsLem}
Let $T$ be a tube and let $Y(T)\subset T$ be a shading with $|Y(T)|\geq\delta$. Let $0<\eps_0<1$. Then there is a ball $B(x,r)$ with $\delta\leq r\leq 1$ and 
$$
|Y(T) \cap B(x,r)|\geq\delta^{\eps_0}|Y(T)|
$$ 
so that for all $x^\prime\in \RR^3$ and all $\delta\leq r^\prime\leq 1$ we have
\begin{equation}\label{consequenceOfTwoEnds}
|Y(T)\cap B(x^\prime, r^\prime)\cap B(x,r)|\leq (r^\prime/r)^{\eps_0}|Y(T)\cap B(x,r)|.
\end{equation}
\end{lemma}

\begin{defn}
Let $\tubes$ be a set of $\delta$--tubes, and for each $T\in\tubes$ let $Y(T)$ be a shading of $T$. A \emph{refinement} of $Y$ is a set $\tubes^\prime\subset \tubes$, and for each $T\in\tubes^\prime$ a set $Y^\prime(T)\subset Y(T)$ so that $\sum_{T\in\tubes^\prime}|Y^\prime(T)|\gtrsim |\log\delta|^{-C}\sum_{T\in\tubes}|Y(T)|$, where $C$ is an absolute constant (to be pedantic, we should call this a $C$--refinement, but in practice the constant $C$ will always be at most 3). We will sometimes abuse notation and use the same symbols $\tubes, Y$ to denote the refinement of $\tubes$ and $Y$.
\end{defn}

For example, if $(\tubes,Y)$ is a set of tubes and their associated shadings with $|\tubes|\leq\delta^{-C}$, then the function $\sum_{T\in\tubes}\chi_{Y(T)}(x)$ can take integer values between $0$ and $|\tubes|$. However, there exists a set $B$, a number $\mu$, and a refinement $\tubes^\prime=\tubes,\ Y^\prime(T)\subset Y(T)$ so that $\sum_{T\in\tubes^\prime}\chi_{Y^\prime(T)}(x)\sim\mu\chi_B$ pointwise. 

Similarly, if $(\tubes, Y)$ is a set of tubes and their associated shadings with $\delta^{C}\leq Y(T)\leq |T|$ for each $T\in\tubes$, then there exists a number $\lambda$, a refinement $\tubes^\prime\subset \tubes$ and $Y^\prime(T)=Y(T)$ so that $\lambda\leq|Y^\prime(T)|/|T|\leq2\lambda$ for all $T\in\tubes^\prime$. These two types of arguments will occur frequently in the proof, and they are the only places where refinements will be used.

\subsection{The two-ends reduction}

First, we will state a slightly more precise (and uglier) version of Theorem \ref{mainThm} that explicitly describes the different constants involved in the bound.

\begin{prop}[Messy version of Theorem \ref{mainThm}] \label{messyMainProp}
For all $\epsilon>0$, there exist constants $c_\epsilon>0$ and $d(\epsilon)$ so that the following holds. Let $\tubes$ be a set of $\delta$--tubes in $\RR^4$ that satisfy the polynomial Wolff axioms. For each $T\in\tubes$, let $Y(T)\subset T$ with $\lambda\leq |Y(T)|/|T|\leq 2\lambda$. Then 
\begin{equation} \label{volumeBoundExplicit}
\Big| \bigcup_{T\in\tubes}Y(T)\Big| \geq c_{\epsilon} \lambda^{3+\gain}K^{-1} \delta^{1-\gain+\epsilon}\big(\delta^3|\tubes|\big),
\end{equation}
where $K=K_{d(\epsilon),\tubes}=\sup_{1\leq E\leq d(\epsilon)}{K_E}$, and the constants $\{K_E\}$ are from Definition \ref{genWolffDefn}.
\end{prop}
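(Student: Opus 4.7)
The plan is to combine the two-ends reduction with dyadic pigeonholing of the shading multiplicity, and then carry out a trilinear/planar dichotomy using Theorem \ref{3linKakeyaR4} and the structural consequences of the polynomial Wolff axioms recorded in Remark \ref{implicationsOFWolffAxioms}. The exponent $3+\gain$ will emerge from balancing the trilinear estimate (which corresponds to dimension $3+1/4$) against a planar estimate obtained by slicing into three-dimensional Wolff-type problems.

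First I would perform the standard two-ends reduction: by pigeonholing the scale at which each $Y(T)$ concentrates, refine to $Y'(T)\subset Y(T)$ of comparable measure satisfying the two-ends condition with some small exponent $\epsilon_0$, at the cost of $|\log\delta|^{-C}\delta^{O(\epsilon_0)}$. Next, dyadically pigeonhole the multiplicity $\mu(x)=\sum_{T\in\tubes}\chi_{Y'(T)}(x)$, producing a ``popular'' set $B$ on which $\mu\sim\mu_0$. Since $\mu_0|B|\sim \lambda\delta^3|\tubes|$, establishing \eqref{volumeBoundExplicit} is equivalent to the upper bound $\mu_0\lessapprox \lambda^{-(2+\gain)}K\delta^{-1+\gain-\epsilon}$.

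Now fix a threshold $\sigma=\delta^{\alpha}$ with $\alpha>0$ to be chosen, and partition $B=B_{\mathrm{tri}}\sqcup B_{\mathrm{pl}}$, where $x\in B_{\mathrm{tri}}$ iff a definite fraction of triples $(T_1,T_2,T_3)$ with $x\in Y'(T_i)$ satisfy $|v_1\wedge v_2\wedge v_3|\geq\sigma$. On $B_{\mathrm{tri}}$ the integrand of \eqref{trilinearBoundFixedP} is pointwise $\gtrsim \mu_0^3\sigma^{12/13}$, so Chebyshev combined with Theorem \ref{3linKakeyaR4} yields
\[
|B_{\mathrm{tri}}|\,\mu_0^{13/9}\,\sigma^{4/9}\lessapprox \delta^{-1/3},
\]
which together with $\mu_0|B_{\mathrm{tri}}|\leq \lambda\delta^3|\tubes|$ gives the bound corresponding to a maximal function estimate at dimension $3+1/4$, modulo an $O(\sigma)$-loss.

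The main obstacle will be the planar case. On $B_{\mathrm{pl}}$, the defining condition forces the directions of almost all tubes through each $x$ to lie in the $\sigma$-neighborhood of a common 2-plane in direction space, hence the tubes themselves lie in the $O(\sigma)$-neighborhood of some affine 3-plane $\pi_x$. The polynomial Wolff hyperplane bound from Remark \ref{implicationsOFWolffAxioms} shows that each such slab contains at most $CK\delta^{-2}$ tubes. I would cover $B_{\mathrm{pl}}$ by a polynomially bounded family of such slabs; inside each slab I would apply the rescaling in the last bullet of Remark \ref{implicationsOFWolffAxioms} to convert it into a collection of tubes in $\RR^3$ at scale $\delta/\sigma$ that still satisfies the polynomial Wolff axioms, and then invoke Wolff's three-dimensional maximal function estimate at dimension $5/2$. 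Summing the resulting volume contributions across slabs and then comparing with the trilinear estimate, one balances the trilinear gain of exponent $1/4$ against a planar gain controlled by $\alpha$; optimizing $\alpha$ is precisely what produces the specific value $\gain=1/28$ in the final exponent $1-\gain=27/28$. The polylogarithmic and $\delta^{O(\epsilon_0)}$ losses absorb into the $\delta^{\epsilon}$ factor, and the single application of the hyperplane Wolff bound in the planar case accounts for the factor $K^{-1}$ in \eqref{volumeBoundExplicit}.
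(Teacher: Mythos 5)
Your high-level architecture matches the paper: two-ends reduction, dyadic pigeonholing of the multiplicity, a plainy/trilinear dichotomy at a threshold angle $\theta_0$, the trilinear branch handled via Theorem~\ref{3linKakeyaR4} and Chebyshev, and a final balancing to produce $\gain = 1/28$. Your trilinear computation is correct --- it is essentially Corollary~\ref{volumeTrilinearCor} and gives the dimension-$(3+1/4)$ bound $|B_{\mathrm{tri}}| \gtrapprox \lambda^{13/4}K^{-1/4}\sigma\delta^{3/4}(\delta^3|\tubes|)^{1/4}$. (You omit the intermediate reduction to the robustly-transverse case, Proposition~\ref{mainPropQuantTrans}, which the paper needs so that transverse pairs $T_1,T_2$ are available at every point, but this is a standard rescaling-induction step.)

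The genuine gap is the planar branch, which is where all the new work in this paper lives (Section~\ref{plainyTubesSec}). Three concrete problems with your sketch.

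First, a geometric slip that loses crucial structure: if the directions of tubes through $x$ lie within $\sigma$ of a $2$-dimensional subspace of $\RR^4$, then the tubes through $x$ lie in the $O(\sigma)$-neighborhood of an affine \emph{$2$-plane} $x + \Pi_x$, not a $3$-plane. This codimension-$2$ ``fat plane'' structure is exactly what the paper exploits in Proposition~\ref{smallHairbrushProp}: the hairbrush of each coarse tube lies in a union of $2$-planes of small total volume $\rho$, which is the input that generates the $\rho^{1/4}$ factor and hence the extra $\theta^{-1/6}$ gain in Theorem~\ref{plainyTubesBound}. Passing to $3$-plane slabs throws this information away and, as far as I can see, caps you at dimension $3$ (essentially multilinear Kakeya) rather than $3+1/6$ in the planar branch. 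Second, your tube count per slab is wrong: the $\delta^{2-n}$ bound in Remark~\ref{implicationsOFWolffAxioms} is for $M\delta$-neighborhoods of hypersurfaces, whereas your slabs must have thickness $\sigma \gg \delta$ to contain the full fan of tubes through $x$; the linear Wolff axioms then give $\lesssim K\sigma\delta^{-3}$ tubes per $\sigma$-slab, a much larger number. Third, and most importantly, the plane $\pi_x$ varies with $x$, so ``cover $B_{\mathrm{pl}}$ by a polynomially bounded family of such slabs and sum'' is not an available move: a single tube's shading is distributed across many slabs, the slabs overlap heavily, and there is no argument in your sketch controlling the double-counting. The paper handles this via a genuinely two-scale argument: group the $\delta$-tubes into $\theta_0$-tubes, bound $\mu_{\ofine}$ inside each $\theta_0$-cylinder by the $4$-dimensional Wolff estimate (Theorem~\ref{WolffThm}), build a coarse shading on the $\theta_0$-tubes, and then bound $\mu_{\ocoarse}$ with the small-hairbrush estimate, interpolating the two resulting bounds on $\mu \lesssim \mu_{\ofine}\mu_{\ocoarse}$. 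The exponent $1-\gain = 27/28$ comes from balancing the planar bound $\theta_0^{-1/6}\delta$ from Theorem~\ref{plainyTubesBound} against the trilinear bound $\theta_0\,\delta^{3/4}$, giving $\theta_0 = (\delta/\lambda)^{3/14}$; your claim that some slab-based $3$D Wolff argument, optimized in $\alpha$, would produce the same $1/28$ is not substantiated and I do not believe it is true.
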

\begin{remark}
Many statements throughout the proof will begin with the phrase ``for all $\epsilon>0$, there exist constants $c_\epsilon>0$ and $d(\epsilon)$ so that....'' The reader should think of the constant $c_{\epsilon}$ as differing between statements, but the function $d(\epsilon)$ is universal. \end{remark}
\begin{prop}[Two-ends version of Proposition \ref{messyMainProp}]\label{messyMainPropTwoEnds}
For all $\epsilon>0$, $\epsilon_0>0$ there exist constants $c_{\epsilon,\epsilon_0}>0, C^\prime_{\epsilon,\epsilon_0}$, and $d(\epsilon)$ so that the following holds. Let $\tubes$ be a set of $\delta$--tubes in $\RR^4$ that satisfy the polynomial Wolff axioms. For each $T\in\tubes$, let $Y(T)\subset T$ with $\lambda\leq |Y(T)|/|T|\leq 2\lambda$. Suppose that each tube $T\in\tubes$ satisfies the two-ends condition with exponent $\epsilon_0$ and error $\alpha$. Then 
\begin{equation} \label{volumeBoundExplicitTwoEnds}
\Big| \bigcup_{T\in\tubes}Y(T)\Big| \geq c_{\epsilon,\epsilon_0}\alpha^{-C^\prime_{\epsilon,\epsilon_0}} \lambda^{3+\gain}K^{-1} \delta^{1-\gain+\epsilon}\big(\delta^3|\tubes|\big),
\end{equation}
where $K=K_{d(\epsilon),\tubes}=\sup_{1\leq E\leq d(\epsilon)}{K_E}$, and the constants $\{K_E\}$ are from Definition \ref{genWolffDefn}.
\end{prop}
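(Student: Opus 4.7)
The plan is to deduce Proposition \ref{messyMainPropTwoEnds} from the trilinear Kakeya-type estimate (Theorem \ref{3linKakeyaR4}) together with an induction-on-scales argument that handles the case where triples of tubes degenerate to lie near a $2$-plane. The two-ends hypothesis will enter to prevent shadings from collapsing into small balls at each stage of the induction.

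The first step is dyadic pigeonholing on the multiplicity $\mu(x)=\sum_{T\in\tubes}\chi_{Y(T)}(x)$: after refinement, we may assume $\mu(x)\sim\mu_0$ on a set $B$, and the identity $\mu_0|B|\gtrapprox\lambda\delta^3|\tubes|$ reduces the target inequality to an upper bound on $\mu_0$ (equivalently, a lower bound on $|B|$). Next, for a typical $x\in B$, one dyadically partitions the triples of tubes through $x$ by the wedge magnitude $|v_1\wedge v_2\wedge v_3|\sim\eta$ and splits into two regimes.

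In the \emph{trilinear regime}, where a positive fraction of triples at $x$ satisfy $\eta\gtrsim\eta_0$ for some threshold $\eta_0$, I would apply Theorem \ref{3linKakeyaR4} directly on $B$: the pointwise lower bound $\sum\chi_{T_1}\chi_{T_2}\chi_{T_3}|v_1\wedge v_2\wedge v_3|^{12/13}\gtrsim\mu_0^3\eta_0^{12/13}$ on $B$, combined with \eqref{trilinearBoundFixedP}, yields $|B|\mu_0^{13/9}\eta_0^{12/13}\lessapprox\delta^{-1/3}$, which is a dimension-$(3{+}1/4)$ bound in disguise. In the \emph{planar regime}, where most triples at $x$ are nearly coplanar, I would use the polynomial Wolff axiom for $2$-planes (Remark \ref{implicationsOFWolffAxioms}) to bound the tube count in the offending plane neighborhoods, and then invoke the grains decomposition (Proposition \ref{existsGrainsDecomp}) to cover the relevant part of $\bigcup_T Y(T)$ by $\delta$-neighborhoods of bounded-degree algebraic varieties. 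Inside each such neighborhood I would localize to a $\rho$-cylinder, rescale using the scale-invariance of the polynomial Wolff axioms, and recurse at scale $\delta/\rho$. The two-ends condition is exactly what guarantees that the rescaled shadings remain substantial: each $Y(T)$ restricted to the cylinder still occupies a fraction $\gtrsim\alpha^{-1}\rho^{\epsilon_0}$ of the rescaled tube, so the induction proceeds with only a polynomial loss in $\alpha$ per step, which is the source of the $\alpha^{-C^\prime_{\epsilon,\epsilon_0}}$ factor in the conclusion.

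The main obstacle is the delicate balance between the trilinear regime, which in isolation would give the stronger dimension $3+1/4$, and the planar regime, which without extra input is limited to Wolff's dimension-$3$ exponent. The value $\gain=1/28$ emerges from optimizing the number of induction steps and the threshold $\eta_0$ against one another. The primary bookkeeping burden will be tracking how the constants $K_E$, the multiplicative error $\alpha$, and the $\delta^{-\epsilon}$ losses from pigeonholing compound through the recursion, so that the final exponent $\gain=1/28$ survives. Matching the exponents in \eqref{trilinearBoundFixedP} to the recursive structure is where I expect the bulk of the technical work to be concentrated.
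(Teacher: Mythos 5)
The dichotomy into a trilinear regime and a planar regime, the use of Theorem~\ref{3linKakeyaR4} in the trilinear regime, and the balancing that produces the exponent $1/28$ are all consistent with the paper's approach. But your treatment of the \emph{planar regime} is not the paper's argument, and as described it has a genuine gap.

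You propose to handle the nearly-coplanar case by invoking the grains decomposition and then recursing inside $\rho$-cylinders. The paper uses the grains decomposition only in the proof of the trilinear bound (Theorem~\ref{3linKakeyaR4} / Proposition~\ref{techVersion3lin}), not in the planar case. In the planar case, the paper's engine is Theorem~\ref{plainyTubesBound}, whose proof rests on a new structural observation: if the tubes through each point lie within angle $\theta$ of a $2$-plane, then the hairbrush of any single tube $T_0$ is contained in a union of $2$-planes through the axis of $T_0$ whose total volume is controllably small. This is quantified in Proposition~\ref{smallHairbrushProp} (the small-hairbrush volume estimate), where a projection-to-a-plane argument combined with C\'ordoba's $L^2$ bound gives a stronger-than-Wolff multiplicity bound. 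This is then spliced into a two-scale analysis (thin $\delta$-tubes grouped into fat $\theta$-tubes), with Wolff's theorem at the fine scale and the small-hairbrush bound at the coarse scale; the final estimate interpolates the two. Nothing in your recursion-on-scales picture supplies this structure: rescaling into a $\rho$-cylinder and re-applying the main estimate just returns you to the same dichotomy at scale $\delta/\rho$ with no new information, and the polynomial Wolff axioms for $2$-planes alone only give a Wolff-type bound (dimension $3$), not a gain.

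Two smaller issues. First, the statement that after restricting to a cylinder each $Y(T)$ occupies a fraction $\gtrsim\alpha^{-1}\rho^{\epsilon_0}$ of the rescaled tube has the role of the two-ends condition backwards; the condition $|Y(T)\cap B(x,r)|\leq\alpha r^{\epsilon_0}|Y(T)|$ is an upper bound that forbids concentration, and it does not yield a lower bound on the restricted shading. In the paper, the two-ends condition is exploited inside the hairbrush argument (to push mass away from $N_r(T_0)$), and is threaded through the fine/coarse analysis so that the shading $Y(T_\theta)$ on the fat tubes again satisfies a two-ends condition. Second, the paper inserts an intermediate reduction to the robustly transverse case (Proposition~\ref{mainPropQuantTrans}); it is only after this reduction that the plainy/trilinear dichotomy with threshold $\theta_0=(\delta/\lambda)^{3/14}$ is applied. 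You would need something playing that role to control the degenerate case where almost all tubes through a point share a direction. The trilinear half of your sketch is essentially correct (modulo the exponent $\eta_0^{4/9}$ rather than $\eta_0^{12/13}$ and the dropped factor $(\delta^3|\tubes|)^{4/3}$), but the planar half needs the small-hairbrush idea; without it, the proposed recursion does not close.
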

\begin{proof}[Proof of Proposition \ref{messyMainProp} using Proposition \ref{messyMainPropTwoEnds}]
The reduction from Proposition \ref{messyMainProp} to Proposition \ref{messyMainPropTwoEnds} is a standard application of the ``two-ends reduction'' argument. The one new feature is that instead of pointing in $\delta$--separated directions, the tubes in $\tubes$ instead satisfy the polynomial Wolff axioms. As a result, we must take greater care when performing the ``rescaling'' part of the argument.

Fix $\epsilon>0$. Let $\epsilon_0=\epsilon/(6+1/14)$. Apply Lemma \ref{twoEndsLem} to each tube $T\in\tubes$. Let $B(x,r)$ be the resulting ball, and define $Y^\prime(T) = Y(T)\cap B(x,r)$. Note that 
$$
|Y^\prime(T)|\geq r^{\epsilon_0}\lambda |T|,
$$ 
and 
$$
|B(x^\prime,r^\prime)\cap Y^\prime(T)|\leq (r^\prime/r)^{\epsilon_0} |Y^\prime(T)|
$$
for all $x^\prime\in\RR^4$ and all $r^\prime\leq r$.

Each tube $T\in\tubes$ has an associated value of $r_T$ from the two-ends reduction described above, with $\delta\leq r_T\leq 1$. For each tube $T\in\tubes,$ we also have $\delta^{\eps_0}\lambda \leq|Y^\prime(T)|\leq 2\lambda$. We will now dyadically pigeonhole the tubes in $\tubes$ based on this value of $r_T$ and $|Y^\prime(T)|$. We will select a value of $\delta\leq r_0\leq 1$ and $\delta^{\eps_0}\lambda\leq \lambda_0\leq\lambda$ and a set $\tubes^\prime\subset\tubes$ with $|\tubes^\prime|\geq |\log\delta|^2|\tubes|$ so that for each $T\in\tubes^\prime$, we have $r_T\sim r_0$, and $\lambda^\prime\leq |Y^\prime(T)|\leq 2\lambda^\prime$. 

Cover $B(0,2)$ by boundedly overlapping balls of radius $10r_0,$ so that each ball of radius $r_0$ is entirely contained within one of the balls. Associate each $T\in\tubes^\prime$ to one of the balls. This gives us a partition $\tubes^\prime=\bigsqcup \tubes^\prime_B$. 


For each ball $B$, re-scale the tubes in $\tubes^\prime_B$ to have dimensions $1\times \delta/r_0\times\delta/r_0\times\delta/r_0$, and denote this new set by $\tilde \tubes^\prime_B$. Observe that if $\{\tilde K_E\}$ are the Wolff constants associated to $\tilde \tubes^\prime_B$, and if $\{K_E\}$ are the Wolff constants associated to $\tubes$, then $\tilde K_E = r_0^{-3}K_E$ for each $E$ (note that this scaling factor of $r_0^{-3}$ is the same scaling factor one would expect for tubes pointing in $\delta$--separated directions). To see this, let $\tilde S\subset B(0,2)$ be a semi-algebraic set of complexity $E$, and let $S$ be the pre-image of $\tilde S$ under the re-scaling that sends $B$ to the ball $B(0,2)$. Then
\begin{equation}
\begin{split}
|\{\tilde T\in \tilde\tubes\colon |\tilde T\cap\tilde S|\geq\lambda_1|\tilde T| \}| &\leq |\{T\in \tubes\colon |T\cap S|\geq  r_0\lambda_1|T| \}|\\
& \leq K_E |S|\delta^{-3}(r_0\lambda_1)^{-4}\\
&=K_E |\tilde S|\delta^{-3} \lambda_1^{-4}\\
&=K_Er_0^{-3}|\tilde S|(\delta/r_0)^{-3}\lambda_1^{-4}.
\end{split}
\end{equation}
Let $\tilde Y(\tilde T)$ be the re-scaled version of $Y^\prime(T)$. Then $\tilde Y$ satisfies the two-ends condition with exponent $\eps_0$ and error $\alpha = 1$. Furthermore, $|\tilde Y(\tilde T)|\sim (\lambda^\prime/r_0)$ for each $\tilde T\in\tilde\tubes^\prime_B.$

Apply Proposition \ref{messyMainPropTwoEnds} to $\tilde\tubes^\prime_B$. We conclude that
\begin{equation*}
\begin{split}
\Big|\bigcup_{\tilde T\in \tilde\tubes^\prime_B}\tilde Y(\tilde T) \Big|&\geq c_{\epsilon/2, \epsilon_0} (\lambda^\prime/r_0)^{3+\gain}(r_0^{-3}K)^{-1} (\delta/r)^{1-\gain+\epsilon/2}\big((\delta/r_0)^3|\tilde\tubes^\prime_B|\big)\\&
= r_0^{-4} c_{\epsilon/2,\epsilon_0} (\lambda^\prime)^{3+\gain}K^{-1}\delta^{1-\gain+\epsilon/2}\big(\delta^3|\tilde\tubes^\prime_B|\big).
\end{split}
\end{equation*}
and thus after undoing the scaling $\delta\to\delta/r_0$, we have
\begin{equation}\label{unionTubesInBall}
\Big|\bigcup_{T\in\tubes\colon \tilde T\in \tubes^\prime_B}Y^\prime(T) \Big|\geq c_{\epsilon/2} (\lambda^\prime)^{3+\gain}K^{-1}\delta^{1-\gain+\epsilon/2}\big(\delta^3|\tilde\tubes^\prime_B|\big).
\end{equation}
Since $|\tubes|\geq\delta^{\epsilon/2}\sum_B|\tilde\tubes^\prime_B|$, and the sets in \eqref{unionTubesInBall} are at most $10^6$--fold overlapping for different balls $B$, we have
\begin{equation}
\begin{split}
\Big|\bigcup_{T\in \tubes}Y(T) \Big|&\geq c\sum_{B} \Big|\bigcup_{T\in\tubes\colon \tilde T\in \tubes^\prime_B}Y^\prime(T) \Big|\\
&\geq c \sum_B \geq c_{\epsilon/2,\epsilon_0} (\lambda^\prime)^{3+\gain}K^{-1}\delta^{1-\gain+\epsilon/2}\big(\delta^3|\tilde\tubes^\prime_B|\big)\\
&\geq c\ c_{\epsilon/2,\epsilon_0} \lambda^{3+\gain}\delta^{(3+\gain)\epsilon/(6+1/14)}K^{-1}\delta^{1-\gain+\epsilon/2}\big(\delta^3|\tubes|\big)\\
&\geq c\ c_{\epsilon/2,\epsilon_0} \lambda^{3+\gain}K^{-1}\delta^{1-\gain+\epsilon}\big(\delta^3|\tubes|\big).\qedhere
\end{split}
\end{equation}
\end{proof}
\subsection{The robust transversality reduction}\label{rescalingArgumentsSec}

Theorem \ref{3linKakeyaR4} gives the strongest bounds when most triples of intersecting tubes point in three linearly independent directions. As a starting point, one would hope that most pairs of intersecting tubes point in two linearly independent directions. In this section, we will show that we can always reduce to this situation.

\begin{defn}
Let $(\tubes,Y)$ be a set of $\delta$ tubes and their associated shadings. We say that $(\tubes,Y)$ is $s$--robustly transverse (with error $t$) if for all $x\in\RR^4$ and all vectors $v$, we have
\begin{equation}\label{quantTrans}
|\{T\in\tubes \colon x\in Y(T),\ \angle(T,v)< s \}|\leq t |\{T\in\tubes\colon x\in Y(T)\}|.
\end{equation}
\end{defn}

We wish to reduce Proposition \ref{messyMainPropTwoEnds} to the case where $\tubes$ is robustly transverse. This reduction will involve an induction argument. The next proposition is identical to Proposition \ref{messyMainPropTwoEnds}, except we have added the requirement that the tubes are robustly transverse.

\begin{prop}\label{mainPropQuantTrans}

For all $\epsilon>0$, $\epsilon_0>0$ there exist constants $c_{\epsilon,\epsilon_0}>0, C^\prime_{\epsilon,\epsilon_0}$, and $d(\epsilon)$ so that the following holds. Let $\tubes$ be a set of $\delta$--tubes in $\RR^4$ that satisfy the polynomial Wolff axioms. For each $T\in\tubes$, let $Y(T)\subset T$ with $\lambda\leq |Y(T)|/|T|\leq 2\lambda$. Suppose that $(\tubes,Y)$ is $s$--robustly transverse (with error $1/100$) and that each tube $T\in\tubes$ satisfies the two-ends condition with exponent $\epsilon_0$ and error $\alpha$. Then 
\begin{equation} \label{volumeBoundExplicitTrans}
\Big| \bigcup_{T\in\tubes}Y(T)\Big| \geq c_{s}c^\prime_{\epsilon,\epsilon_0}\alpha^{-C^\prime_{\epsilon,\epsilon_0}}\lambda^{3+\gain}K^{-1} \delta^{1-\gain+\epsilon}\big(\delta^3|\tubes|\big),
\end{equation}
where $K=K_{d(\epsilon),\tubes}=\sup_{1\leq E\leq d(\epsilon)}{K_E}$, and the constants $\{K_E\}$ are from Definition \ref{genWolffDefn}.
\end{prop}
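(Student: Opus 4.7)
The plan is to deduce Proposition \ref{mainPropQuantTrans} from Theorem \ref{3linKakeyaR4} by a broad/narrow dichotomy at each point, with the narrow case handled via the polynomial Wolff axioms for tubes in $2$-plates, and the two regimes combined by optimizing a threshold parameter. Throughout, let $U = \bigcup_{T\in\tubes} Y(T)$ and $\mu(x) = \sum_T \chi_{Y(T)}(x)$, so $\int\mu \sim \lambda\delta^3|\tubes|$ is pinned and the goal is a lower bound on $|U|$. Fix a parameter $\tau\in[\delta,1]$ to be chosen by optimization. Call a point $x\in U$ \emph{broad} if the pointwise trilinear integrand from Theorem \ref{3linKakeyaR4}, namely $\sum_{T_1,T_2,T_3\in\tubes,\ x\in Y(T_1)\cap Y(T_2)\cap Y(T_3)} |v_1\wedge v_2\wedge v_3|^{12/13}$, is at least $\tau^{12/13}\mu(x)^3$, and \emph{narrow} otherwise. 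A dyadic pigeonhole reduces to one of two cases: $\int_{U_{\mathrm{broad}}}\mu\gtrsim\int\mu$, or $\int_{U_{\mathrm{narrow}}}\mu\gtrsim\int\mu$.

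In the broad case, plug the pointwise inequality into Theorem \ref{3linKakeyaR4} (more precisely its technical version Proposition \ref{techVersion3lin}) restricted to $U_{\mathrm{broad}}$ to obtain $\int_{U_{\mathrm{broad}}}\mu^{13/9} \lessapprox K^{\beta}\tau^{-4/9}\delta^{-1/3}$ for an explicit $\beta>0$. Applying Hölder at the conjugate exponents $(13/9,13/4)$ against $\chi_{U_{\mathrm{broad}}}$, using $\int_{U_{\mathrm{broad}}}\mu\gtrsim\lambda\delta^3|\tubes|$, yields a dimension-$(3{+}1/4)$ lower bound on $|U_{\mathrm{broad}}|$ with a bonus positive power of $\tau$.

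In the narrow case, the pointwise inequality combined with the hypothesis that at least a $99\%$ fraction of pairs $(T_1,T_2)$ through $x$ are pairwise $s$-transverse forces (after discarding an $O(1/100)$ fraction of tubes) the remaining tubes whose shading contains $x$ to cluster in a $\tau/s^2$-neighborhood of a single $2$-plane $\Pi_x$ through $x$: once two $s$-transverse directions $v_1,v_2$ are fixed, the small-wedge condition pins $v_3$ to lie $(\tau/s^2)$-close to $\operatorname{span}(v_1,v_2)$. Cover $U_{\mathrm{narrow}}$ by $2$-plates of dimensions $1\times 1\times(\tau/s^2)\times(\tau/s^2)$; by the linear Wolff axioms (second bullet of Remark \ref{implicationsOFWolffAxioms}) each plate contains at most $CK(\tau/s^2)^2\delta^{-3}$ tubes of $\tubes$. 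A double-counting argument, using the two-ends condition to prevent the shadings from collapsing within any one plate, then converts the per-plate bound into a dimension-$3$ lower bound on $|U_{\mathrm{narrow}}|$ that carries a negative power of $\tau$. Optimizing $\tau$ to balance the two bounds interpolates between dimension $3$ and dimension $3+1/4$; the weight on the trilinear side works out to $1/7$, producing the interpolated dimension $3+\tfrac{1}{4}\cdot\tfrac{1}{7}=3+\gain$ demanded by \eqref{volumeBoundExplicitTrans}, with the factors $s$, $\alpha^{-C'}$, $K^{-1}$, and $\delta^{\epsilon}$ all tracked through the argument (the $\delta^{\epsilon}$ absorbing the $|\log\delta|^{C}$ from the pigeonholes).

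The main obstacle will be the narrow case: we must translate a merely \emph{pointwise} trilinear degeneracy into a \emph{global} slab structure on the level of shadings, not just of tube directions. The linear Wolff axioms bound how many tubes lie in each plate, but to actually apply this we first need to extract a small collection of plates that together absorb most of the shading mass. This is where the two-ends condition becomes essential — without it a shading could concentrate at a single point and evade any slab covering — and where the full strength of the polynomial (rather than merely linear) Wolff axioms is likely to enter, via a rescaling analogous to the one already carried out in the proof of Proposition \ref{messyMainProp} from Proposition \ref{messyMainPropTwoEnds}, but now performed inside plates rather than inside balls.
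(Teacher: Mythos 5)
Your broad/narrow split (pointwise trilinear integrand $\gtrless\tau^{12/13}\mu(x)^3$) matches the paper's trilinear/plainy dichotomy in spirit, and the broad case is handled essentially as the paper does via Corollary \ref{volumeTrilinearCor}. The problem is the narrow case, which is where almost all the work lies, and your proposed argument there would not deliver the bound you need.

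Track the exponents carefully. Your broad bound carries a gain of $\tau$ on top of $\delta^{3/4}$ (i.e., dimension $3+1/4$ with a factor $\tau$ from the transversality parameter, exactly as in \eqref{volumeOfTrlinearTubes}). To close the balance and produce $\delta^{1-\gain}$ at $\tau=(\delta/\lambda)^{3/14}$, the narrow case must produce a volume lower bound of size $\tau^{-1/6}\delta$, i.e., a \emph{positive gain} of $\tau^{-1/6}$ over the Cordoba/hairbrush dimension-$3$ bound. A covering of $U_{\mathrm{narrow}}$ by $2$-plates, combined with the linear Wolff axioms (at most $K(\tau/s^2)^2\delta^{-3}$ tubes per plate) and double counting, does not yield this: the plates overlap badly, so the count of plates does not convert into a volume bound, and in fact the extreme case where all tubes lie in a single plate gives volume $\sim\tau^2$, which is small, not large. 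The two-ends condition prevents a single shading from concentrating at a point, but it does nothing to spread the \emph{union} of the shadings across disjoint regions of space. At best, your narrow argument recovers the dimension-$3$ Cordoba bound with no dependence on $\tau$, and then the balance gives $\tau=\delta^{1/4}$ and the trivial $|U|\gtrsim\delta$, i.e., zero gain.

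The $\tau^{-1/6}$ improvement in the narrow case is the content of Theorem \ref{plainyTubesBound}, which occupies all of Section \ref{plainyTubesSec} and is a genuinely two-scale hairbrush argument: one passes to fat $\theta$-tubes whose hairbrushes are constrained to lie in a union of $2$-planes of small total volume (because the thin tubes through each point already cluster near a plane), estimates the fine-scale multiplicity inside each fat tube by Wolff's bound at scale $\delta/\theta$, estimates the coarse-scale multiplicity of the fat tubes by the small-hairbrush-volume hairbrush argument (Proposition \ref{smallHairbrushProp}), and then combines. Nothing in your narrow-case sketch (plate covering, per-plate tube count, double counting, two-ends) performs the projection step along the central axis of the heavy tube, which is where the $\theta^{-1/6}$ gain originates. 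Your closing remark that the gap can be bridged by a rescaling inside plates analogous to the proof of Proposition \ref{messyMainProp} is not the mechanism the paper uses: that rescaling only renormalizes the Wolff constants and cannot by itself generate any improvement over dimension $3$.
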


We will show that Proposition \ref{mainPropQuantTrans} implies Proposition \ref{messyMainPropTwoEnds}. 

\begin{proof}[Proof of Proposition \ref{messyMainPropTwoEnds} using Proposition \ref{mainPropQuantTrans}]
Suppose that Proposition \ref{mainPropQuantTrans} holds; we will prove Proposition \ref{messyMainPropTwoEnds} by induction on $\delta$. Fix the value of $\epsilon$ and $\epsilon_0$ from the statement of Proposition \ref{messyMainPropTwoEnds}. We will assume that $\epsilon<1/4$. 

By making the constant $c_{\epsilon,\epsilon_0}$ sufficiently small, we can assume that Proposition \ref{messyMainPropTwoEnds} holds for all $\delta>0$ satisfying 
\begin{equation}\label{smallLog}
|\log\delta|\leq \delta^{-\epsilon/100}.
\end{equation}

Suppose Proposition \ref{messyMainPropTwoEnds} has been established for all $\delta^\prime>\delta$, and let $(\tubes,Y)$ be a set of $\delta$ tubes satisfying the hypotheses of Proposition \ref{messyMainPropTwoEnds}. Let $X\subset\bigcup_{T\in\tubes}Y(T)$ be the set where \eqref{quantTrans} holds with $s>0$ a small constant (depending only on $\epsilon$ and $\epsilon_0$) to be determined later. Either
\begin{enumerate}
\item[(A):] $\sum_{T\in\tubes}|Y(T)\cap X|\geq\frac{1}{2}\sum_{T\in\tubes}|Y(T)|$, or
\item[(B):] (A) fails
\end{enumerate}
Suppose (A) holds. Let $\tubes^\prime=\{T\in\tubes\colon |Y(T)\cap X|\geq \frac{1}{4}\lambda|T|\}$. Then $|\tubes^\prime|\geq \frac{1}{4}|\tubes|$. For each $T\in\tubes^\prime$, let $Y^\prime(T)\subset T$ with $\lambda/4\leq |Y^\prime(T)|/|T|\leq \lambda/2$. Each of the tubes still satisfies the two-ends condition with exponent $\epsilon_0$ and error $\alpha/4$. Apply Proposition \ref{mainPropQuantTrans} to $\tubes^\prime$ with the shading $Y^\prime(T)$ and with the value of $\epsilon$ and $\epsilon_0$ specified above. We conclude that
\begin{equation*}
\begin{split}
\Big| \bigcup_{T\in\tubes}Y(T)\Big| &\geq 
\Big| \bigcup_{T\in\tubes}Y^\prime(T)\Big| \\
&\geq c_s c^\prime_{\epsilon,\epsilon_0}(4\alpha)^{-C^\prime_{\epsilon,\epsilon_0}}(\frac{1}{4}\lambda)^{3+\gain}K^{-1}\delta^{1-\gain+\epsilon}(\delta^3|\tubes^\prime|)\\
&\geq 4^{-C^\prime_{\epsilon,\epsilon_0}-4} c_s c^\prime_{\epsilon,\epsilon_0}\alpha^{-C^\prime_{\epsilon,\epsilon_0}}\lambda^{3+\gain}K^{-1} \delta^{1-\gain+\epsilon}(\delta_3|\tubes|).
\end{split}
\end{equation*}
Thus Proposition \ref{messyMainPropTwoEnds} holds as long as $c_{\epsilon,\epsilon_0}\leq c^\prime_{\epsilon,\epsilon_0}c_s4^{-C^\prime_{\epsilon,\epsilon_0}-4}$. 

Now suppose (B) holds. Let $A=\RR\backslash X$. Then for every $x\in A$, there is a vector $v_x$ so that
\begin{equation}\label{popularRho}
|\{T\in\tubes\colon x\in Y(T),\ \angle(T, v_x)<s \}|\geq \frac{1}{100} |\{T\in\tubes\colon x\in Y(T) \}|.
\end{equation}

For each $T\in\tubes,$ let $Y^\prime(T)=Y(T)\cap \{x \in A \colon \angle(T,v_x)< s\}$. Then 
\begin{equation}\label{boundOnYPrime}
\sum_{T\in\tubes}|Y^\prime(T)|\geq \frac{1}{200}\lambda(\delta^3|\tubes|).
\end{equation}

Let $\tubes^\prime=\{T\in\tubes\colon |Y^\prime(T)|\geq \frac{1}{400}\lambda|T|\}$. Then $|\tubes^\prime|\geq \frac{1}{400}|\tubes|$. We can refine each shading $Y^\prime(T)$ slightly so that $\lambda/400\leq |Y^\prime(T)|/|T|\leq \lambda/200$. Each of the tubes still satisfies the two-ends condition with exponent $\epsilon_0$ and error $400\alpha$.

Cover the sphere $S^3$ with $\leq 100$-fold overlapping caps of radius $3 s$, so that every ball (in $S^3$) of radius $s$ is entirely contained in one of the caps. Note that if $x\in A$, then there is a cap $\tau$ so that every tube $T\in\tubes^\prime$ with $x\in Y^\prime(T)$ points in a direction lying in $\tau$. For each cap $\tau$, let $\tubes^\prime(\tau)\subset\tubes^\prime$ be a set of tubes pointing in directions lying in $\tau$, so that $\tubes^\prime=\bigsqcup_{\tau}\tubes^\prime(\tau)$ is a partition of $\tubes^\prime$. For each cap $\tau$, partition $B(0,2)$ into $\leq 100$--fold overlapping cylinders of dimensions $1\times 10\rho\times 10\rho\times 10\rho$; call this set of cylinders $\operatorname{Cyl}(\tau)$. Note that if $T\in\tubes^\prime(\tau)$, then $T$ is contained in at least one of these cylinders. For each such cylinder $U$, $\tubes^\prime(\tau,U)\subset\tubes^\prime(\tau)$, so that 
$$
\tubes^\prime=\bigsqcup_{\tau}\bigsqcup_{U\in \operatorname{Cyl}(\tau)}\tubes^\prime(\tau,U),
$$
i.e.~each tube from $\tubes$ is assigned to a cap $\tau$ and a $100\rho$ cylinder pointing in the direction $\tau.$  

Observe that the sets $\{\bigcup_{T\in\tubes(\tau,U)}Y^\prime(T) \}_{\tau, U}$ are at most $10^4$--fold overlapping. Thus
\begin{equation}\label{disjointness}
\Big|\bigcup_{T\in\tubes^\prime}Y(T) \Big|\geq 10^{-4}\sum_{\tau}\sum_{U\in \operatorname{Cyl}(\tau)}\Big|\bigcup_{T\in \tubes^\prime(\tau,U)}Y^\prime(T)\Big|.
\end{equation}

Now, for each cap $\tau$ and each cylinder $U\in \operatorname{Cyl}(\tau)$, let $L$ be a line pointing in the same direction as $U$ and distance $100 s$ from $U$. Let $f\colon\RR^4\to\RR^4$ be the map that fixes $L$ and dilates $\RR^4$ by a factor of $s^{-1}$ in all directions orthogonal to $L$. Then $f(U)$ contains a ball of radius $1/1000$ and is contained in a ball of radius $1000$; if $Y\subset U$ is a set, then 
\begin{equation}\label{dilationFactorOff}
\frac{1}{1000}\rho^{-3}|Y|\leq |f(Y)|\leq 1000\rho^{-3}|Y|;
\end{equation}
if $T\in\tubes(\tau,U)$, then $f(T)$ is contained in a $1000\rho^{-1}\delta$ tube and contains a $\frac{1}{1000}\rho^{-1}\tubes$. For each $T\in\tubes^\prime(\tau,U)$, let $\tilde T$ be a $1000\rho^{-1}\delta$--tube that contains $f(T)$, and let $\tilde Y(T)=f(Y(T)).$ Then $\tilde\tubes=\{\tilde T\colon T\in \tubes(\tau,U)\}$ satisfies the polynomial Wolff axioms (at scale $\rho^{-1}\delta$). By \eqref{dilationFactorOff}, the constant $K$ associated to $\tilde\tubes$ is at most 1000 times the constant $K$ associated to $\tubes^\prime$ (which is also the constant associated to $\tubes$) ($K$ also depends on $\epsilon$, but we have fixed a value of $\epsilon$ throughout this proof.)

Observe that the tubes in $\tilde\tubes$ have thickness $\delta^\prime=1000\rho^{-1}\delta<\delta$, and by the induction hypothesis, we know that Proposition \ref{messyMainPropTwoEnds} holds for this value of $\delta^\prime$. Thus we can apply Proposition \ref{messyMainPropTwoEnds} to $(\tilde\tubes,\tilde Y)$ (with the same value of $\epsilon$ and $\epsilon_0$ as above). We conclude that
$$
\Big|\bigcup_{\tilde T\in\tilde\tubes}\tilde Y(T)\Big|\geq  c_{\epsilon,\epsilon_0}(400\alpha)^{-C^\prime_{\epsilon,\epsilon_0}}(10^{-5} \lambda)^{3+\gain}(10^3K)^{-1}(\delta/s)^{1-\gain+\epsilon}\big( (\delta/s)^3|\tilde\tubes|\big), 
$$
and thus by \eqref{dilationFactorOff},
\begin{equation}\label{estimateInsideTubes}
\Big|\bigcup_{T\in\tubes(\tau,U) }Y^\prime(T)\Big|\geq c_{\epsilon,\epsilon_0}(400\alpha)^{-C^\prime_{\epsilon,\epsilon_0}}(10^{-5}\lambda)^{3+\gain}(10^3K)^{-1}(\delta/s)^{1-\gain+\epsilon}\big( \delta^3|\tubes(\tau,U)|\big).
\end{equation} 
Combining \eqref{disjointness} and \eqref{estimateInsideTubes}, we conclude
\begin{equation}\label{puttingItAllTogether}
\begin{split}
\Big|\bigcup_{T\in\tubes^\prime}Y(T) \Big|&\geq 10^{-20-3C^\prime_{\epsilon,\epsilon_0}}\sum_{\tau}\sum_{U\in \operatorname{Cyl}(\tau)}c_{\epsilon,\epsilon_0}\alpha^{-C^\prime_{\epsilon,\epsilon_0}}\lambda^{3+\gain}K^{-1}(\delta/s)^{1-\gain+\epsilon}\big( \delta^3|\tubes(\tau,U)|\big)\\
&\geq c_0\ c_{\epsilon,\epsilon_0}\alpha^{-C^\prime_{\epsilon,\epsilon_0}}\lambda^{3+\gain}K^{-1}(\delta/s)^{1-\gain+\epsilon}\big( \delta^3|\tubes|\big),
\end{split}
\end{equation}
Thus, provided we select $s$ sufficiently small (depending only on $C^\prime_{\epsilon,\epsilon_0},$ which in turn depends only on $\epsilon$ and $\epsilon_0$) so that $s^{1-\gain-\epsilon} < s^{1/2} < 10^{-20-3C^\prime_{\epsilon,\epsilon_0}}$, then 
$$
\Big|\bigcup_{T\in\tubes}Y(T) \Big|\geq c_{\epsilon,\epsilon_0}\alpha^{-C^\prime_{\epsilon,\epsilon_0}}\lambda^{3+\gain} K^{-1}\delta^{1-\gain+\epsilon}(\delta^3|\tubes|),
$$
which closes the induction and completes the proof of Proposition \ref{messyMainPropTwoEnds}. 
\end{proof}
In Section \ref{proofOfMainThmSection} we will prove Proposition \ref{mainPropQuantTrans}.
\section{Trilinear Kakeya in $\RR^4$}\label{trilinearKakeyaR4Sec}
In this section, we will prove Theorem \ref{3linKakeyaR4}. First, we will state a slightly more technical version of the theorem
\begin{prop}\label{techVersion3lin}
For all $\epsilon>0$, there exist constants $C_\epsilon, d(\epsilon)$ so that the following holds. Let $\tubes$ be a set of $\delta$ tubes in $\RR^4$ that satisfy the polynomial Wolff axioms. Then
\begin{equation}\label{trilinearBoundFixedP}
\int\Big(\sum_{T_1,T_2,T_3\in\tubes}\chi_{T_1}\ \chi_{T_2}\ \chi_{T_3}\ |v_1\wedge v_2\wedge v_3|^{12/13}\Big)^{13/27} \leq C_{\epsilon}\delta^{-1/3-\epsilon}K^{1/9}(\delta^3|\tubes|)^{4/3},
\end{equation}
where in the above expression $v_i$ is the direction of the tube $T_i$, and $K=K_{\tubes, d(\epsilon)}=\sup_{1\leq E\leq d(\epsilon)}{K_E}$, where $\{K_E\}$ are the constants from Definition \ref{genWolffDefn}
\end{prop}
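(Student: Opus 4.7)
The plan is to combine dyadic pigeonholing with a grains decomposition and the polynomial Wolff axioms: after standardizing the various parameters in the trilinear sum, I would cover the set where the integrand is large by $\delta$-neighborhoods of low-degree algebraic varieties; on each such neighborhood the polynomial Wolff axioms control the number of tubes, and the transversality weight $|v_1\wedge v_2\wedge v_3|^{12/13}$ is killed whenever three tubes are forced to be nearly tangent to a surface. The specific exponents $13/27$ and $12/13$ should emerge from balancing these two mechanisms across grains of different dimensions.

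First I would refine $\tubes$ and choose a shading so that the multiplicity function $\mu(x)=\#\{T\in\tubes:x\in T\}$ is dyadically constant on a popular set $X$ of volume $V$, and so that the wedge $|v_1\wedge v_2\wedge v_3|$ is dyadically comparable to some value $\sigma$ for most triples counted at each $x\in X$. The LHS of \eqref{trilinearBoundFixedP} is then $\lessapprox V\mu^{13/9}\sigma^{4/9}$, and using $V\mu\approx \delta^3|\tubes|$ the target bound reduces to showing $\mu\,\sigma\lessapprox K^{1/4}|\tubes|^{3/4}$. A further pigeonhole splits the tubes into three direction-separated subfamilies $\tubes_1,\tubes_2,\tubes_3$ lying in caps on $S^3$ of radius $\sim\sigma^{1/2}$, chosen so that every triple from $\tubes_1\times\tubes_2\times\tubes_3$ meeting $X$ has wedge $\gtrsim\sigma$; this puts the problem in the broad/transverse form where a grains decomposition can be applied.

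Next I would apply Proposition \ref{existsGrainsDecomp} to $X$ to obtain a cover by semi-algebraic grains $\{G_\alpha\}$, each contained in the $\delta$-neighborhood of a bounded-degree algebraic variety $Z_\alpha$ of dimension $k_\alpha\in\{2,3\}$; grains of larger dimension correspond to regions whose volume is already ``big'' and so contribute trivially. On each $G_\alpha$ the polynomial Wolff axioms (Remark \ref{implicationsOFWolffAxioms}) bound the number of tubes from each $\tubes_i$ whose shading meets $G_\alpha$ by $K\delta^{1-k_\alpha}$ after rescaling to the cell containing the grain. When $k_\alpha=2$ the directions of such tubes must lie within angle $\sim\sigma^{1/2}$ of the tangent plane to $Z_\alpha$, so forcing a triple from $\tubes_1\times\tubes_2\times\tubes_3$ to be $\sigma$-transverse costs a factor in $\sigma$ that balances the Wolff gain; the case $k_\alpha=3$ is handled analogously using the tangent hyperplane. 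Summing the per-grain contributions and optimizing between the two regimes produces the stated inequality, with the fractional exponents $13/27$ and $12/13$ arising from the three-way balance among per-grain tube count, grain volume, and transversality weight.

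The main obstacle I expect is the per-grain analysis on $2$-dimensional grains: one has to prove a clean quantitative statement to the effect that ``at most $\sim K\delta^{-1}$ tubes lie in the $\delta$-neighborhood of a bounded-degree $2$-surface, and any $\sigma$-transverse triple among them must pay a factor of $\sigma$ that depends only on the degree'', without losing more than $\delta^{-\epsilon}$ in the bookkeeping. A secondary difficulty is making the grains decomposition compatible with the cap partition from the reduction step, so that no large $\sigma^{-O(1)}$ factor appears in the summation; I would expect to handle this by passing between the ambient tubes and rescaled tubes inside the $\sigma^{1/2}$-cylinders associated to the caps, in the spirit of the rescaling manoeuvre used in Section \ref{rescalingArgumentsSec}, and likely running an induction on $\delta$ where the grains decomposition provides the downward step to a smaller scale.
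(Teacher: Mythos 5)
Your proposal correctly identifies several ingredients that the paper does use (dyadic pigeonholing of the wedge and of the cubes, the grains decomposition of Proposition \ref{existsGrainsDecomp}, and the fact that the polynomial Wolff axioms restrict the number of tubes in the $\delta$-neighborhood of a low-degree variety), but there are two genuine gaps that I do not see how to close along the lines you describe.

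The first and most serious gap is that you never invoke a multilinear Kakeya estimate. The paper's proof uses Theorem \ref{BourgainGuthTHhm} (the Bourgain--Guth form of trilinear Kakeya in $\RR^4$) \emph{on each sub-grain}: this is the tool that converts a per-grain tube count $N_1,N_2,N_3$ into control of the weighted local multiplicity $\mu^{3/2}\theta^{1/2}$ on the grain. Your proposal replaces this by the assertion that the transversality weight is ``killed whenever three tubes are forced to be nearly tangent to a surface,'' but a $\delta$-tube can intersect the $\delta$-neighborhood of a variety completely transversally, in a segment of length as small as $\delta$; there is no directional constraint forcing tubes in a grain to lie near its tangent plane. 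The paper handles exactly this dichotomy by introducing the intersection lengths $\ell_i=\operatorname{diam}(\operatorname{CC}(T_i\cap G,x))$, pigeonholing them, and covering each grain by sub-grains of radius $\ell_1=\max\ell_i$; it is only at that sub-grain scale that the Wolff-axiom bound $N_1\lesssim K\ell_1^{-1}\delta^{-2}$ kicks in (via the $|G'|\lesssim\ell_1^3\delta$ volume estimate), and even then one still needs the Bourgain--Guth input to control the pointwise multiplicity among the $N_i$ tubes meeting a given sub-grain. Without it, the $N_i$ tubes could stack up on a single $\delta$-cube and you would have no bound on the local multiplicity.

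The second gap is in your opening reduction. You set $\mu(x)$ to be the full tube multiplicity and write the inner sum as $\sim\mu^3\sigma^{12/13}$, then use $V\mu\approx\delta^3|\tubes|$ to reduce to a clean statement about $\mu\sigma$. But the number of triples through $x$ with wedge $\sim\sigma$ can be far smaller than $\mu(x)^3$; the paper makes this distinction explicit (the inequality $\mu\leq(\mu_1\mu_2\mu_3)^{1/3}$ and the remark following it), and carries both quantities through the argument. Your reduction bounds the left side by a larger quantity, so to finish you would have to prove a strictly stronger estimate than the proposition asserts, and I believe that stronger estimate fails in general. (Also, as written, your reduced target $\mu\sigma\lessapprox K^{1/4}|\tubes|^{3/4}$ has dropped a factor of $\delta^{3/2}$ that should appear on the right.) Replacing the ad hoc tangency heuristic with the Bourgain--Guth trilinear estimate, and tracking the wedge-weighted multiplicity separately from the full multiplicity as the paper does, are the missing steps.
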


\begin{cor}\label{volumeTrilinearCor}
For all $\epsilon>0$, there exist constants $c_\epsilon>0, d(\epsilon)$ so that the following holds. Let $\tubes$ be a set of $\delta$--tubes in $\RR^4$ that satisfy the polynomial Wolff axioms. For each $T\in\tubes,$ let $Y(T)\subset T$ with $|Y(T)|\sim\lambda|T|$. Suppose that $(\tubes,Y)$ is $s$--robustly transverse with error $1/100$, and that for all $x\in\RR^4$ and all 2--planes $\Pi$, we have 
\begin{equation}\label{notConcentratedInThetaPlane}
|\{T\in\tubes\colon x\in Y(T),\ \angle(T,\Pi)<\theta\}|\leq \frac{1}{100}|\{T\in\tubes\colon x\in Y(T)\}|.
\end{equation}
Then
\begin{equation}\label{volumeOfTrlinearTubes}
\Big|\bigcup Y(T)\Big|\geq c_\epsilon c_s \lambda^{3+1/4}K^{-1/4} \theta\delta^{3/4+\epsilon}(\delta^3|\tubes|)^{1/4},
\end{equation}
where $K=\sup_{1\leq E\leq d(\epsilon/9)}{K_E}$, and $\{K_E\}$ are the constant from Definition \ref{genWolffDefn}
\end{cor}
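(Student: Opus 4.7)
My plan is to deduce Corollary \ref{volumeTrilinearCor} from the trilinear estimate \eqref{trilinearBoundFixedP} by dyadic pigeonholing on the multiplicity function $f(x) = \sum_{T\in\tubes}\chi_{Y(T)}(x)$, combined with a short direction-geometry argument that converts the two transversality hypotheses into a pointwise lower bound on the trilinear integrand. First, I would dyadically pigeonhole $f$: using $\int f = \sum_T|Y(T)|\sim \lambda\delta^3|\tubes|$ and $f\le |\tubes|\lesssim \delta^{-4}$, I extract a scale $N$ and a set $U\subset \bigcup_T Y(T)$ on which $f\sim N$ and $N|U|\gtrapprox \lambda\delta^3|\tubes|$. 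Logarithmic losses from pigeonholing will be absorbed into the factor $\delta^{-\epsilon}$.

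Second, I would turn robust transversality and planar non-concentration into a pointwise lower bound for $F(x) := \sum_{T_1,T_2,T_3}\chi_{Y(T_1)}(x)\chi_{Y(T_2)}(x)\chi_{Y(T_3)}(x)|v_1\wedge v_2\wedge v_3|^{12/13}$. Fix $x\in U$, so there are $\sim N$ tubes $T\in\tubes$ with $x\in Y(T)$. For each such $T_1$, $s$-robust transversality allows at most $N/100$ tubes $T_2$ through $x$ with $\angle(T_1,T_2)<s$, leaving at least $99N/100$ partners for which $|v_1\wedge v_2|\gtrsim s$. For each good pair, the planar non-concentration hypothesis \eqref{notConcentratedInThetaPlane} applied to $\Pi=\operatorname{span}(v_1,v_2)$ excludes at most $N/100$ choices of $T_3$ within angle $\theta$ of $\Pi$, giving at least $99N/100$ choices for which $|v_1\wedge v_2\wedge v_3|\gtrsim s\theta$. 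Summing over this constant fraction of good ordered triples yields $F(x)\gtrsim N^3(s\theta)^{12/13}$ pointwise on $U$.

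Third, since $\chi_{Y(T)}\le\chi_T$, Proposition \ref{techVersion3lin} gives $\int F^{13/27}\le C_\epsilon\delta^{-1/3-\epsilon}K^{1/9}(\delta^3|\tubes|)^{4/3}$, while the pointwise bound above integrates to $\int_U F^{13/27}\gtrsim |U|\,N^{13/9}(s\theta)^{4/9}$ (using $\tfrac{12}{13}\cdot\tfrac{13}{27}=\tfrac{4}{9}$ and $3\cdot\tfrac{13}{27}=\tfrac{13}{9}$). Substituting $N\gtrapprox \lambda\delta^3|\tubes|/|U|$ from the first step, isolating $|U|^{-4/9}$, and raising to the $-9/4$ power yields
\[
|U|\;\gtrapprox\; s\theta\,\lambda^{13/4}K^{-1/4}\delta^{3/4+9\epsilon/4}(\delta^3|\tubes|)^{1/4}.
\]
After reparameterizing $\epsilon$ (and taking $d(\epsilon)$ to be the one produced by Proposition \ref{techVersion3lin} with parameter $\epsilon/9$) and writing $c_s = c\cdot s$, and using $|\bigcup_T Y(T)|\ge |U|$, this is exactly \eqref{volumeOfTrlinearTubes}.

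The main obstacle, such as it is, is the direction-geometry step of the second paragraph: the combinatorial bookkeeping that secures a constant fraction of triples with both $|v_1\wedge v_2|\gtrsim s$ and $\angle(v_3,\operatorname{span}(v_1,v_2))\gtrsim\theta$ simultaneously is precisely where the two distinct transversality assumptions in the hypothesis are used, and the linear dependence on $\theta$ (rather than on some higher power) in the conclusion is dictated by this step. Once the pointwise lower bound on $F$ is in hand, the trilinear input \eqref{trilinearBoundFixedP} does the rest of the work, and the final exponents $13/4$ and $3/4$ are forced by the exponents $13/27$ and $12/13$ appearing there.
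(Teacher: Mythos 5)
Your proposal is correct and follows essentially the same approach as the paper's proof: dyadically pigeonhole the multiplicity function to a set $U$ (the paper's $B$) with $\sum\chi_{Y(T)}\sim N\chi_U$ (the paper's $\mu\chi_B$), use the two transversality hypotheses to show the trilinear integrand is $\gtrsim N^3(s\theta)^{12/13}$ pointwise on $U$, then combine the resulting lower bound for $\int(\cdot)^{13/27}$ with the upper bound from Proposition \ref{techVersion3lin} and the relation $N|U|\approx\lambda\,\delta^3|\tubes|$ to extract the volume bound. The only cosmetic difference is that you keep the $s$-dependence explicit (obtaining $c_s\sim s$) while the paper absorbs it into an unspecified constant depending on $s$.
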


\begin{remark}
Heuristically, Corollary \ref{volumeTrilinearCor} says that if $\tubes$ is a set of $\delta^{-3}$ essentially distinct $\delta$--tubes that satisfy the polynomial Wolff axioms, and if most triples of tubes passing through a typical cube in $\RR^4$ span three quantitatively linearly independent directions, the the union of the tubes has volume at least $\delta^{3/4};$ this corresponds to a Hausdorff dimension bound of $3+1/4$.

In contrast, the multilinear Kakeya theorem says that the union of these tubes has volume at least $\delta$ (this is a weaker statement that corresponds to a Hausdorff dimension bound of 3).
\end{remark}

\begin{remark}
If $\tubes$ satisfies the polynomial Wolff axioms, then since each tube in $\tubes$ is contained in $B(0,2)$, and $B(0,2)$ is a semi-algebraic set of measure $\leq 100$ and complexity 2, we have $(\delta^3|\tubes|)\leq 100 C_2$, where $C_2$ is the constant from Definition \ref{genWolffDefn}. Thus \eqref{volumeOfTrlinearTubes} can be replaced by the (weaker) bound
\begin{equation}\label{alternateTrilinBound}
\Big|\bigcup Y(T)\Big|\geq c_\epsilon c_s \lambda^{3+1/4}K^{-1} \theta\delta^{3/4+\epsilon}(\delta^3|\tubes|).
\end{equation}
\end{remark}

\begin{proof}[Proof of Corollary \ref{volumeTrilinearCor}]
Fix a value of $\epsilon>0$. By choosing the constant $c_{\epsilon}$ in Corollary \ref{volumeTrilinearCor} sufficiently small, we can assume that $|\log\delta|\leq\delta^{-\epsilon/13}$. Let $c_{\epsilon}\leq C_{\epsilon/9}^{-9/4}$.

After a refinement of $(\tubes,Y)$ (obtained by replacing each shading $Y(T)$ by $Y(T)\cap B$), we can assume that $\sum\chi_{Y(T)}\sim \mu\chi_B$; that \eqref{notConcentratedInThetaPlane} still holds; and that $(\tubes,Y)$ is still $s$ robustly transverse with error $1/100$.

By \eqref{notConcentratedInThetaPlane} and the fact that the tubes are robustly transverse, for each point $x\in B$ we have
$$
\sum_{T_1,T_2,T_3\in\tubes} \chi_{T_1}(x)\ \chi_{T_2}(x) \chi_{T_3}(x) |v_1\wedge v_2\wedge v_3|^{12/13}\gtrsim \theta^{12/13}\mu^3,
$$
where the implicit constant depends on $s$. To see this, note that there are $\gtrsim\mu$ choices for $T_1$ with $x\in Y(T_1)$. Next, since $(\tubes,Y)$ is $s$--robustly transverse with error $1/100$, there are $\gtrsim \mu$ choices for $T_2$ with $x\in Y(T_2)$ and $\angle(T_1,T_2)\geq s$. Finally, by \eqref{notConcentratedInThetaPlane}, there are $\gtrsim \mu$ choices for $T_3$ with $x\in Y(T_3)$ such that the angle between $T_3$ and the plane spanned by $T_1$ and $T_2$ is $\geq \theta$. For each such choice of $T_1,T_2,T_3$, we have $v_1\wedge v_2\wedge v_3\gtrsim s\theta.$ 

Thus
\begin{equation*}
\int_{B}\Big( \sum_{T_1,T_2,T_3\in\tubes} \chi_{T_1}(x)\ \chi_{T_2}(x) \chi_{T_3}(x)\ (v_1\wedge v_2\wedge v_3)^{12/13}\Big)^{13/27}\gtrsim \theta^{4/9}|B|\mu^{13/9}.
\end{equation*}
On the other hand, by Theorem \ref{3linKakeyaR4} we have
\begin{equation*}
\int_{B}\Big( \sum_{T_1,T_2,T_3\in\tubes} \chi_{T_1}(x)\ \chi_{T_2}(x) \chi_{T_3}(x) |v_1\wedge v_2\wedge v_3|^{12/13} \Big)^{13/27}\leq C_{\epsilon/2} \delta^{-1/3-\epsilon/9}K_{\tubes, d(\epsilon/9)}^{1/9}(\delta^3|\tubes|)^{4/3}.
\end{equation*}

We conclude that 
$$
\theta^{4}|B|^9\mu^{13}\leq C_{\epsilon/2}^9 \delta^{-3-9\epsilon/9}K_{\tubes, d(\epsilon/9)}(\delta^3|\tubes|)^{12}.
$$

Since $|B|\geq |\log\delta|^{-1}\lambda\mu^{-1}(\delta^3|\tubes|)\geq \delta^{-\epsilon/13}\lambda\mu^{-1}(\delta^3|\tubes|)$, we have
\begin{equation}
\begin{split}
\Big|\bigcup Y(T)\Big|&\geq \Big(C_{\epsilon/9}^{-9}\lambda^{13}\delta^{3+9\epsilon/9}K_{\tubes, d(\epsilon/9)}^{-1}(\delta^3|\tubes|\delta^{-\epsilon})  \Big)^{1/4} \\
&\geq c_{\epsilon} c_s \lambda^{13/4}K^{-1/4} \theta\delta^{3/4}(\delta^3|\tubes|)^{1/4}.
\end{split}
\end{equation}
\end{proof}

The rest of Section \ref{trilinearKakeyaR4Sec} will be devoted to proving Theorem \ref{3linKakeyaR4}. 
\subsection{Main tools}
\subsubsection{Multilinear Kakeya}
We will make use of the multilinear Kakeya bounds discussed in the introduction. Specifically, we will use a slightly technical version that was established by Bourgain and the first author in \cite{BG}. 
\begin{theorem}[Bourgain-Guth, \cite{BG}, Theorem 6]\label{BourgainGuthTHhm}
Let $\tubes_1,\tubes_2,\tubes_3$ be three sets of $\delta$--tubes in $\RR^4$. Then
\begin{equation}
\Big(\int \sum_{T_1,T_2,T_3\in\tubes_1\times\tubes_2\times\tubes_3}\chi_{T_1}\ \chi_{T_2}\ \chi_{T_3}\ v_1\wedge v_2\wedge v_3 \Big)^{1/2}\lesssim \delta^4 (|\tubes_1|\ |\tubes_2|\ |\tubes_3|)^{1/2}.
\end{equation}
\end{theorem}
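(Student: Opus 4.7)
The plan is to derive this trilinear Kakeya bound in $\RR^4$ from the endpoint multilinear Kakeya inequality in $\RR^3$ of Bennett-Carbery-Tao \cite{BCT} and Guth \cite{Guth2}, via an integral-geometric slicing argument that realises the weight $|v_1\wedge v_2\wedge v_3|$ as an average over a $1$-parameter family of hyperplane projections of $\RR^4$.

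First I would establish the elementary integral-geometric identity
\begin{equation*}
|v_1\wedge v_2\wedge v_3| \;\sim\; \int_{e\in S^3}\bigl|\pi_e(v_1)\wedge \pi_e(v_2)\wedge \pi_e(v_3)\bigr|\,de,
\end{equation*}
where $\pi_e\colon\RR^4\to e^\perp\cong\RR^3$ denotes orthogonal projection.  This follows from the linear-algebraic identity $|\pi_e(v_1)\wedge\pi_e(v_2)\wedge\pi_e(v_3)|=|v_1\wedge v_2\wedge v_3|\cdot|n\cdot e|$, where $n$ is a unit normal to $\mathrm{span}(v_1,v_2,v_3)$, combined with the rotational invariance of the average $\int_{S^3}|n\cdot e|\,de$.

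Second, for each $e\in S^3$ I would slice $\RR^4$ by the hyperplanes $H^e_t=\{x\cdot e=t\}$.  On each slice $H^e_t\cong\RR^3$, the intersection of a tube $T_j$ with $H^e_t$ is, after a rescaling in the direction normal to $\pi_e(v_j)$, a $3$-dimensional $\delta$-tube in direction $\pi_e(v_j)/|\pi_e(v_j)|$; and the $\RR^3$ wedge of these slice-wise directions is exactly $|\pi_e(v_1)\wedge\pi_e(v_2)\wedge\pi_e(v_3)|$.  Applying the endpoint $3$-linear Kakeya inequality on each slice, integrating the resulting slice-wise bound over $t\in\RR$, and averaging over $e\in S^3$ via Fubini together with the identity from Step 1, produces the desired $4$-dimensional bound.

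The main obstacle will be to handle the degeneracy that arises when $e$ is nearly parallel to one of the directions $v_j$, since $|\pi_e(v_j)|$ is then small and the slice-wise $\RR^3$ inequality degenerates.  This should be handled by dyadically decomposing the integral over $e$ according to the sizes of $|\pi_e(v_j)|$ and summing the dyadic contributions.  A further subtlety is that the outer exponent $1/2$ is not preserved under Minkowski's inequality, so the integrations over $t$ and $e$ must be carried out in the right order and re-assembled via Cauchy-Schwarz in order to avoid losing a factor of $\delta$; tracking the powers of $\delta$ through the $\RR^3$ endpoint inequality and the subsequent slicing and averaging steps should then yield the factor $\delta^4$ on the right-hand side.
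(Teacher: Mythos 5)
The paper does not actually prove this statement; it cites Bourgain--Guth \cite{BG}, so there is no internal proof to compare against. Two issues with your proposal are worth flagging nonetheless. First, a preliminary point: the inequality as printed almost certainly has a misplaced exponent. It should read $\int\big(\sum_{T_1,T_2,T_3}\chi_{T_1}\chi_{T_2}\chi_{T_3}|v_1\wedge v_2\wedge v_3|\big)^{1/2}\lesssim\delta^4(|\tubes_1||\tubes_2||\tubes_3|)^{1/2}$, with the $1/2$ on the integrand. That is the form invoked in the proof of Proposition~\ref{techVersion3lin}, and it is the form for which the three-fold bush (all families equal to $\delta^{-3}$ direction-separated tubes through a point) is sharp, both sides being $\sim\delta^{-1/2}$. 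With the $1/2$ on the outside, the same bush gives $(\int F)^{1/2}\sim\delta^{-5/2}$ against a right-hand side of $\delta^{-1/2}$, so the printed version is simply false, and no proof can succeed without first correcting it. You reproduce the printed version verbatim.

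The more serious gap is in Step~2, and it is not the dyadic bookkeeping you flag at the end. The hyperplane slice $T_j\cap H^e_t$ of a four-dimensional $\delta$-tube is not a three-dimensional $\delta$-tube: a direct computation shows it is an ellipsoid of dimensions roughly $\min(1,\delta/|v_j\cdot e|)\times\delta\times\delta$, elongated \emph{along} $\pi_e(v_j)$, so for generic $e$ (with $|v_j\cdot e|\sim 1$) it is essentially a $\delta$-ball. Your ``rescaling in the direction normal to $\pi_e(v_j)$'' is in the wrong direction; and even rescaling along $\pi_e(v_j)$ does not help, since the stretch factor $|v_j\cdot e|^{-1}$ is tube-dependent, so the rescaled family is not a uniform collection of $\delta$-tubes in $\RR^3$ and the Bennett--Carbery--Tao/Guth endpoint inequality cannot be applied slice by slice. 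The slices genuinely are $\delta$-tubes only when $|v_j\cdot e|\lesssim\delta$ for $j=1,2,3$, i.e.\ when $e$ lies in a $\sim\delta^3$-measure neighbourhood of $\pm\hat n$, the unit normal to $\operatorname{span}(v_1,v_2,v_3)$; since $\hat n$ varies with the triple, this cannot be detached from the $e$-average in Step~1. The degeneracy you identify ($e$ nearly parallel to some $v_j$, so $|\pi_e(v_j)|$ small) is the wrong regime -- the problem is already present at $|v_j\cdot e|\sim 1$, which is the generic case, not a boundary case to be summed away. Any slicing proof along these lines would need a substantial preliminary reduction placing all directions near a common $3$-plane before slicing perpendicular to it, which is a genuinely new step absent from your outline.
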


\begin{remark} Theorem 6 from \cite{BG} states this result for the case $|\tubes_1|=|\tubes_2|=|\tubes_3|$, but the proof gives the estimate above when the three cardinalities are not equal. \end{remark}
\subsubsection{Grains decomposition}

\begin{defn}[Grain]
A grain (of complexity $d$) in $\RR^n$ is the $C\delta$--neighborhood of a semi-algebraic set of dimension $\leq n-1$ and complexity $\leq d$.
\end{defn}
\begin{remark}
One can also define grains of dimension smaller than $n-1$. Such grains may play a role in establishing $k$--linear estimates in $\RR^n$ when $k<n-1$.
\end{remark}

\begin{defn}\label{cubeDefn}
A $\delta$--cube in $\RR^n$ is a set of the form $[0,\delta)^n+ v,$ where $v\in(\delta\ZZ)^n$
\end{defn}

\begin{defn}[Grains decomposition]
Let $\QQQ$ be a set of cubes in $\RR^n$. A \emph{Grains decomposition} of $\QQQ$ of degree $d$ and error $\epsilon$ is a set of grains $\mathcal{G}$, each of which has complexity $\leq d$. This decomposition has the following properties.
\begin{itemize}
\item The cubes are evenly-distributed across the grains: For each $G\in\mathcal{G}$, there is a set $\QQQ_G\subset \QQQ$. The sets $\{\QQQ_G\}$ are disjoint; if $Q\in\QQQ_G$ then $Q\subset G$. We have 
\begin{equation}\label{mostCubesCaptured}
\sum_{G\in\mathcal{Q}}|\QQQ_G|\geq\delta^\epsilon|\QQQ|, 
\end{equation}
and
\begin{equation}\label{grainsSameSize}
\delta^{\epsilon}|\QQQ|/|\mathcal{G}|\leq|\QQQ_G|\leq\delta^{-\epsilon}|\QQQ|/|\mathcal{G}|.
\end{equation}

\item A tube doesn't intersect too many grains: If $T$ is a $\delta$--tube, then
\begin{equation}\label{notTooManyTubesPerGrain}
|\{G\in\mathcal{G}\colon T\cap Q\neq\emptyset\ \textrm{for some}\ Q\in\QQQ_G\}|\leq C_\epsilon\delta^{-\epsilon} |G|^{1/n}.
\end{equation}
\end{itemize}
\end{defn}

\begin{prop}\label{existsGrainsDecomp}
Let $\QQQ$ be a set of $\delta$-cubes in $\RR^n$. Then for each $\epsilon>0$, there exists a grains decomposition of $\QQQ$ of degree $d(\epsilon)$ and error $\epsilon$.
\end{prop}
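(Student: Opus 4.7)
The plan is to establish Proposition \ref{existsGrainsDecomp} through an iterated polynomial partitioning argument in the spirit of Guth's grains decomposition from \cite{Guth}. Fix $\epsilon>0$, choose a large but bounded degree $D = D(\epsilon)$ and a bounded number of recursion rounds $M = M(\epsilon)$, and take the target complexity to be $d(\epsilon) = (CD)^M$. The parameters will be chosen so that all accumulated pigeonholing losses fit inside the factor $\delta^{-\epsilon}$ appearing in the three required conditions.

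Starting from the full cube set $\QQQ$, apply polynomial partitioning to produce a polynomial $P_1$ of degree $\leq D$ whose zero set cuts $\RR^n$ into $O(D^n)$ cells, each receiving roughly $|\QQQ|/D^n$ cubes. By dyadic pigeonholing, one of two cases occurs: either (a) a $\delta^\epsilon$-fraction of the cubes lie in the cells of $\RR^n \setminus Z(P_1)$, in which case we recurse inside each cell (a semi-algebraic set of complexity $O(D)$); or (b) a $\delta^\epsilon$-fraction of the cubes lie in the thickened zero set $N_{C\delta}(Z(P_1))$, in which case we recurse on this slab, for instance by polynomially partitioning $Z(P_1)$ itself and so reducing to a lower-dimensional ambient geometry. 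After at most $M$ recursive iterations along any branch we halt and declare the resulting regions to be the grains $\mathcal{G}$, each of complexity $\leq d(\epsilon)$. Condition \eqref{mostCubesCaptured} follows from accumulating the per-step pigeonhole losses, and a final dyadic pigeonhole over $\log_2|\QQQ_G|$ enforces the two-sided balance \eqref{grainsSameSize} at a cost of at most $|\log\delta|^{O(1)} \leq \delta^{-\epsilon}$.

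The main obstacle is verifying the tube incidence estimate \eqref{notTooManyTubesPerGrain}. The basic B\'ezout-type input is that a line meets $Z(P_1)$ in at most $D$ points, so a $\delta$-tube crosses the slab $N_{C\delta}(Z(P_1))$ in at most $D$ transversal components, plus possibly one long tangential segment. Iterating $M$ times yields a crude uniform bound of $D^M = \delta^{-o(1)}$ on the number of grains hit by any tube, which accounts for the $\delta^{-\epsilon}$ prefactor; the subtlety is extracting the additional $|G|^{1/n}$ gain. The key geometric observation is that a grain of small total volume $|G|$ must sit inside a region of diameter comparable to $|G|^{1/n}$, so that a $\delta$-tube can meet a family of such small grains only while passing through that small region, contributing a travel length of $|G|^{1/n}$ rather than $1$. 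Making this localization quantitative, and in particular handling the tangential components of the recursion where without care a tube could stay close to a variety over a long segment and re-enter many small grains, is the technical heart of the argument; I expect this to be handled by inserting additional polynomial partitions within each tangential locus, exploiting that the tangency set has strictly smaller dimension than the variety itself.
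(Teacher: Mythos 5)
Your proposal follows the same broad iterated-partitioning template as the paper, but there are several genuine gaps that prevent it from establishing the key estimate.

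\textbf{Termination and grain complexity.} You propose fixing a bounded number of recursion rounds $M = M(\epsilon)$ and taking the grains to be the regions left after $M$ rounds, of complexity $(CD)^M$. This cannot work. The recursion depth $s$ in the paper is controlled by the inequality $|\QQQ_J| \leq (Cd^{-n})^s |\QQQ|$, which forces $s \lesssim \log |\QQQ| / \log d$; since $|\QQQ|$ can be as large as $\delta^{-n}$, the depth grows like $|\log\delta|$ and cannot be bounded by any constant depending only on $\epsilon$. If you halt after $M(\epsilon)$ rounds regardless, the algebraic case may not yet have occurred, and the resulting regions are $n$-dimensional cells, not $C\delta$-neighborhoods of $\leq (n-1)$-dimensional sets, so they are not grains in the paper's sense. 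The paper sidesteps this entirely: the recursion runs until a constant fraction of the cells (by cube weight) hit the algebraic case, and the grains are defined to be $N_{C\delta}(Z_J)$, neighborhoods of a \emph{single} degree-$\leq d$ variety at the terminal node. Their complexity is thus $\leq d(\epsilon)$ uniformly, no matter how deep the recursion went. Tracking the intersection of all ancestor cells, as your proposal does, makes the complexity grow with depth, which defeats the purpose.

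\textbf{The tube incidence bound.} The exponent in \eqref{notTooManyTubesPerGrain} is $|\mathcal{G}|^{1/n}$, the cardinality of the grain family (the $G$ in that display is a bound variable inside the set; compare how the inequality is applied as $N_i |\mathcal{G}'| \lessapprox |\tubes| |\mathcal{G}'|^{1/4}$ in the proof of Proposition \ref{techVersion3lin}, and the conclusion ``$T$ intersects $\leq (d+1)^s \lessapprox |\mathcal{G}|^{1/n}$ cells'' in the paper's proof). Your reading of $|G|$ as the volume of an individual grain, and the accompanying heuristic that small volume forces small diameter, is false: a slab $N_{C\delta}(Z)$ can have volume $\sim\delta$ and diameter $\sim 1$. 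The actual derivation is a counting argument: at each level a tube meets at most $d+1$ sub-cells (a B\'ezout bound, using that the cells $O'_J$ avoid a $\delta$-neighborhood of the zero set), so after $s$ levels $T$ meets $\leq (d+1)^s$ cells and hence at most that many grains. Meanwhile, from the even-distribution pigeonhole $|\QQQ_J| \lessapprox d^{-ns} |\QQQ|$ one obtains $|\mathcal{G}| \gtrapprox |\QQQ|/t \gtrapprox d^{ns}$, giving $(d+1)^s \lessapprox |\mathcal{G}|^{1/n}$. This relation between the number of cells hit and the total number of grains is the heart of the estimate, and it does not appear in your sketch. The tangency worry you flag is in fact handled automatically because grains are only created at algebraic-case terminations and the cell count at each level is already controlled; no additional partitioning within tangency loci is needed.

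\textbf{Recursing in the algebraic case.} You propose to further partition inside the slab $N_{C\delta}(Z(P_1))$. The paper simply stops there; this keeps the grain complexity fixed and the bookkeeping clean. The extra recursion is unnecessary and would complicate both the complexity accounting and the incidence estimate.
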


Before we prove Proposition \ref{existsGrainsDecomp}, first recall the polynomial partitioning theorem from \cite{GK}:
\begin{theorem}[Polynomial partitioning]
Let $U\subset\RR^n$ be an open set. Then for each $d\geq 1$, there is a polynomial $P\in\RR[x_1,\ldots,x_n]$ of degree $\leq d$ so that $\RR^n\backslash Z(P)$ is a union of $A\sim d^n$ cells (unions of open connected components of $\RR^n\backslash Z(P)$), and for each cell $O$, $|O\cap U|=|U|A^{-1}$.
\end{theorem}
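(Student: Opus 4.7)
The plan is to prove the polynomial partitioning theorem via iterated polynomial ham-sandwich bisection, following the Guth--Katz strategy. The first step is to establish the \emph{polynomial ham-sandwich theorem}: given $N$ finite compactly supported Borel measures $\mu_1,\ldots,\mu_N$ on $\RR^n$, there is a nonzero polynomial $P$ of degree $O(N^{1/n})$ such that $\mu_i(\{P>0\})=\mu_i(\{P<0\})$ for every $i$. I would prove this in the standard way, via the Veronese embedding $v_D\colon\RR^n\to\RR^{\binom{n+D}{n}}$ sending $x$ to the tuple $(x^\alpha)_{|\alpha|\le D}$: polynomials of degree $\le D$ in $n$ variables pull back affine functions on the image, so applying the classical ham-sandwich theorem (a direct consequence of Borsuk--Ulam) to the pushforward measures $(v_D)_*\mu_i$ furnishes a bisecting hyperplane as soon as $\binom{n+D}{n}-1\ge N$, i.e.\ as soon as $D\gtrsim N^{1/n}$.

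Next I would iterate. At step $1$, apply polynomial ham-sandwich to the single measure $\mathbf{1}_U\,dx$ to obtain $P_1$ of degree $O(1)$ such that each of $\{P_1>0\}$ and $\{P_1<0\}$ intersects $U$ in a set of measure exactly $|U|/2$; exactness is available because $\mathbf{1}_U\,dx$ is absolutely continuous, hence nonatomic. At step $k+1$, the sign patterns of $(P_1,\ldots,P_k)$ partition $\RR^n\setminus Z(P_1P_2\cdots P_k)$ into at most $2^k$ sign-regions, and by induction each such region intersects $U$ in a set of measure exactly $2^{-k}|U|$. Apply polynomial ham-sandwich to these $2^k$ restricted (nonatomic) measures simultaneously to produce $P_{k+1}$ of degree $O(2^{k/n})$ bisecting each.

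After $K$ steps, set $P:=P_1P_2\cdots P_K$. Each of the $2^K$ sign-pattern regions of $(P_1,\ldots,P_K)$, each a (possibly disconnected) union of open connected components of $\RR^n\setminus Z(P)$, meets $U$ in measure exactly $2^{-K}|U|$ by construction. The degree telescopes as $\deg(P)=\sum_{k=0}^{K-1}O(2^{k/n})=O(2^{K/n})$, where the geometric sum is dominated by its last term (with a constant depending only on $n$). Choosing $K=\lceil n\log_2 d\rceil$ yields $A=2^K\asymp d^n$ cells and $\deg(P)\lesssim d$, which is exactly the desired conclusion after absorbing the implicit constant into the choice of $d$.

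The main subtlety is conceptual rather than computational: the exact equal-measure assertion $|O\cap U|=|U|/A$ would fail if one insisted on using each individual connected component of $\RR^n\setminus Z(P)$ as a separate cell, since a sign-pattern region can split into many topological components. One must instead interpret ``cell'' as a sign-pattern region, which is precisely what the parenthetical ``unions of open connected components'' in the statement permits. The other place to take care is the Veronese reduction itself: one must verify that the preimage under $v_D$ of a half-space $\{L>0\}\subset\RR^{\binom{n+D}{n}}$ is exactly $\{P>0\}$ for some $P$ of degree $\le D$, so that bisecting the pushforward measures genuinely corresponds to bisection by a polynomial of the correct degree in the original space.
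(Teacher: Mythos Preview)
The paper does not prove this theorem; it simply recalls it from \cite{GK} without argument. Your proposal is exactly the Guth--Katz proof (polynomial ham-sandwich via the Veronese embedding, followed by iterated bisection with a geometric degree sum), and it is correct, including your observation that the ``cells'' must be taken to be sign-pattern regions rather than individual connected components.
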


\begin{cor}\label{equivsalg} Let $\QQQ$ be a set of $\delta$-cubes in $\RR^n$ and let $d>0$. Then one of the following two things must happen:

(Cellular case) There is a (non-zero) polynomial $P$ of degree at most $d$ so that the following holds.  The set $\RR^n \setminus Z(P)$ is a disjoint union of $\sim d^{n}$ open sets $O_i$.  For each $O_i$, define $\QQQ_i \subset \QQQ$ as the set of cubes $Q \in \QQQ$ contained in $O_i \setminus N_{10 \delta} (\partial O_i)$.  For each $i$, $| \QQQ_i | \lesssim d^{-n} | \QQQ|$, but at the same time $\sum_i | \QQQ_i | \gtrsim |\QQQ|$.

(Algebraic case) There is an algebraic variety $Z$ of dimension $\leq n-1$ and degree at most $d$ so that $| \QQQ_{Z} | \gtrsim | \QQQ|$, where $\QQQ_{Z}$ is the set of cubes in $\QQQ$ that lie in $N_{C \delta} (Z)$ for some constant $C$.  

\end{cor}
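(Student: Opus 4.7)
The plan is to apply the polynomial partitioning theorem directly to the open set $U=\bigcup_{Q\in\QQQ}Q^\circ$, where $Q^\circ$ denotes the interior of the half-open cube $Q$. Since the cubes in $\QQQ$ are distinct elements of the lattice $(\delta\ZZ)^n$, they are pairwise disjoint and so $|U|=\delta^n|\QQQ|$. The polynomial partitioning theorem then produces a nonzero polynomial $P\in\RR[x_1,\ldots,x_n]$ with $\deg P\le d$ such that $\RR^n\setminus Z(P)$ is a disjoint union of $A\sim d^n$ open cells $O_1,\ldots,O_A$, each satisfying $|O_i\cap U|=|U|/A$.

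Next, set up the sought dichotomy. Let $C=10+\sqrt{n}$, and define $\QQQ_Z\subset\QQQ$ to be the cubes contained in $N_{C\delta}(Z(P))$; for each cell $O_i$ define $\QQQ_i$ as in the statement. The crucial observation is that any cube $Q\in\QQQ$ that does not lie in $\QQQ_Z$ must be at Euclidean distance more than $10\delta$ from $Z(P)$, and therefore lies inside a unique connected component of $\RR^n\setminus N_{10\delta}(Z(P))$, which is contained in some $O_i\setminus N_{10\delta}(\partial O_i)$; hence such a $Q$ belongs to exactly one $\QQQ_i$. It follows that $\QQQ=\QQQ_Z\cup\bigsqcup_i\QQQ_i$.

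Now split on whether $|\QQQ_Z|\geq \tfrac{1}{2}|\QQQ|$. If so, we are in the algebraic case: take $Z=Z(P)$, an algebraic hypersurface of dimension at most $n-1$ and degree at most $d$. Otherwise $\sum_i|\QQQ_i|\geq\tfrac{1}{2}|\QQQ|$, which supplies the lower bound of the cellular case. The matching upper bound $|\QQQ_i|\lesssim d^{-n}|\QQQ|$ is forced by the polynomial partitioning balance via a volume comparison: the disjoint $\delta$-cubes in $\QQQ_i$ lie inside $O_i$, so
\[
\delta^n|\QQQ_i|\;\leq\;|O_i\cap U|\;=\;\tfrac{|U|}{A}\;\lesssim\;\tfrac{\delta^n|\QQQ|}{d^n}.
\]

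There is no real obstacle here; the entire statement is a direct bookkeeping consequence of polynomial partitioning. The only mild subtlety is that the quoted polynomial partitioning theorem takes an open set as input, which is the reason we replace each $\delta$-cube by its interior before applying it; this alters $|U|$ only by a measure-zero amount and does not affect any of the estimates.
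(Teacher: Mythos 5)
Your proof is correct and is the standard derivation of this partitioning dichotomy from the open-set form of polynomial partitioning; the paper itself states Corollary~\ref{equivsalg} without proof, so there is nothing to contrast it with. The only detail worth flagging, which you handle correctly, is the choice $C = 10 + \sqrt{n}$: if a $\delta$-cube $Q$ is not contained in $N_{C\delta}(Z(P))$, then some point of $Q$ is at distance $>(10+\sqrt{n})\delta$ from $Z(P)$, and since $\operatorname{diam}(Q)=\sqrt{n}\,\delta$, every point of $Q$ is at distance $>10\delta$ from $Z(P)$; combined with $\partial O_i \subset Z(P)$ and the connectedness of $Q$, this places $Q$ in exactly one $O_i \setminus N_{10\delta}(\partial O_i)$, giving the clean decomposition $\QQQ = \QQQ_Z \sqcup \bigsqcup_i \QQQ_i$ and hence the $\tfrac12|\QQQ|$ dichotomy. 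The volume comparison $\delta^n|\QQQ_i| \le |O_i \cap U| = |U|/A$ for the upper bound is exactly right.
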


We are now ready to prove Proposition \ref{existsGrainsDecomp}
\begin{proof}[Proof of Proposition \ref{existsGrainsDecomp}]
Apply Corollary \ref{equivsalg} to the set of cubes $\QQQ$.  If we are in the algebraic case, we stop.  If we are in the cellular case, then we apply the partitioning lemma again inside of each cell.  First we establish a little notation.  We denote the cells by $O_j$.  We define $O_j' := O_j \setminus N_{10 n \delta} (\partial O_j)$.  We let $\QQQ_j \subset \QQQ$ be the set of cubes fully contained in $O_j'$.  In the cellular case, by definition, $\sum_j | \QQQ_j | \gtrsim | \QQQ |$.  We assign a weight to each cell $O_j$ proportional to the number of cubes in $\QQQ_j$.  

In the cellular case, we apply Corollary \ref{equivsalg} to each $\QQQ_j$.  If the Lemma comes out in the algebraic case for a fraction $\gtrapprox 1$ of the cells (by weight), then we stop.  Otherwise, for each $\QQQ_{j_1}$ which comes out in the cellular case, we let $O_{j_1, j_2} \subset O_{j_1}$ be the subcells of $O_{j_1}$, and we define $O_{j_1, j_2}' := O_{j_1, j_2} \setminus N_{10 n \delta} (\partial O_{j_1, j_2})$, and we define $\QQQ_{j_1, j_2} \subset \QQQ_{j_1}$ to be the set of cubes contained in $O_{j_1, j_2}'$.  In the cellular case, $\sum_{j_1, j_2} | \QQQ_{j_1, j_2} | \gtrapprox | \QQQ|$.

We continue in this way until the Lemma comes out in the algebraic case for a fraction $\gtrapprox 1$ of the cells (by weight).  After $s$ steps, we have cells $O_J$ for $J = (j_1, ..., j_s)$, along with $O_J'$ and $\QQQ_J$ defined as above.  The size of $| \QQQ_J|$ is controlled by Corollary \ref{equivsalg}, 

\begin{equation} \label{|Q_J|} | \QQQ_J | \le (C d^{-n})^s | \QQQ |. \end{equation}

If we are not in the algebraic case, then we must have $| \QQQ_J | \ge 1$ for some $J$, and so we see that this procedure will stop at some $s \lesssim \log | \QQQ | \lessapprox 1$.

By Inequality \ref{|Q_J|}, we know that $(C d^{n})^s \le | \QQQ |$.  We choose $d = d(\epsilon)$ big enough that the contribution of $d$ dominates the contribution of $C$, and so $C^s \lessapprox 1$.

For each $J = (j_1, ..., j_s)$ for which Corollary \ref{equivsalg} comes out in the algebraic case, we let $Z_J$ be the $\leq(n-1)$-dimensional algebraic variety guaranteed by the corollary.  Let $G$ be the $C\delta$ neighborhood of $Z_J$. We let $\QQQ_{G} \subset \QQQ_J$ be the subset of cubes of $\QQQ_J$ that lie in $N_{C \delta} (Z_J)$.  By Corollary \ref{equivsalg}, we have $| \QQQ_{J,alg} | \gtrsim | \QQQ_J|$, and since the procedure stopped, we see $\sum_J | \QQQ_{G} | \gtrapprox | \QQQ |$.  We also recall that the cubes of $\QQQ_J$ lie in $O_J'$. After dyadic pigeonholing $|\QQQ_J|$, we can prune the set of indices $J$ so that for each remaining $J$, $| \QQQ_J | \sim t$ and we still have $\sum_J | \QQQ_{G} | \gtrapprox | \QQQ |$.  Since each $| \QQQ_J |$ obeyed $| \QQQ_J | \le (C d^{-n})^s | \QQQ| \lessapprox d^{-ns} | \QQQ |$, we see that $t \lessapprox d^{-ns} | \QQQ |$.  

Let $\mathcal{G}$ be the set of grains associated to these indices $J$. We have 
$$
|\mathcal{G}|\gtrapprox|\QQQ|/t\gtrapprox d^{ns}.
$$

Now, let $T$ be a $\delta$--tube. Note that if $O_{j_1,\ldots,j_t}^\prime$ is a cell that intersects $T$, then after we apply Corollary \ref{equivsalg} to $\QQQ_{j_1,\ldots,j_t}$, at most $d+1$ cells $O_{j_1,\ldots,j_t,j_{t+1}}^\prime$ intersect $T$. Thus $T$ intersects $\leq (d+1)^s\lessapprox|\mathcal{G}|^{1/n}$ cells total.
\end{proof}

We will only use Proposition \ref{existsGrainsDecomp} in the special case $n=4$.

\subsection{Proof of Proposition \ref{techVersion3lin}}

Here is an overview of the proof.  To estimate an expression of the form 

$$\int \left( \sum_{T_1,T_2,T_3\in\tubes}\chi_{T_1}\ \chi_{T_2}\ \chi_{T_3}\ |v_1\wedge v_2\wedge v_3|^p \right)^q,$$

we divide the domain into grains and we use trilinear Kakeya to estimate the contribution of each grain.  The resulting estimate is very strong when the grains are small and becomes weaker as the grains get bigger.  At one extreme, if each grain has diameter $\delta$, then we get very good bounds -- bounds as strong as the full Kakeya conjecture.  At the other extreme, if there is only one grain with maximal diameter, then the bounds that we get are only as good as trilinear Kakeya.  In this bad scenario, all the tubes would lie in a single large grain.  However, the polynomial Wolff axioms limit how many tubes can lie in a single grain, ruling out this bad scenario.  Because not too many tubes can lie in a single grain, we get an improvement over trilinear Kakeya.  

\begin{proof}[Proof of Proposition \ref{techVersion3lin}]
Fix $\epsilon>0$. Let $d(\cdot)$ be the function from Proposition \ref{existsGrainsDecomp}. Let $K=\sup_{1\leq E\leq d(\epsilon)}{K_E}$.

For simplicity of notation, we will replace each tube $T\in\tubes$ with the union of all $\delta$--cubes that intersect $T$; thus we should think of each $T\in\tubes$ as a union of cubes. 

Now we do some dyadic pigeonholing.  First we dyadic pigeonhole $|v_1 \wedge v_2 \wedge v_3|$.  There is a dyadic $\theta$ so that

$$\int\Big(\sum_{T_1,T_2,T_3\in\tubes}\chi_{T_1}\ \chi_{T_2}\ \chi_{T_3}\ |v_1\wedge v_2\wedge v_3|^{12/13}\Big)^{13/27} \sim \int\Big(\sum_{\substack{T_1,T_2,T_3\in\tubes\\ v_1\wedge v_2\wedge v_3 \sim\theta}
} \chi_{T_1}\ \chi_{T_2}\ \chi_{T_3}\ |v_1\wedge v_2\wedge v_3|^{12/13}\Big)^{13/27}. $$

Next we divide $B(0,2)$ into $\delta$-cubes $Q$ and we dyadic pigeonhole their contribution to the last integral.  We can choose a set of cubes $\QQQ$ with $\cup_{Q \in \QQQ} Q = A$ so that

$$\int_{B(0,2)} \Big(\sum_{T_1,T_2,T_3\in\tubes}\chi_{T_1}\ \chi_{T_2}\ \chi_{T_3}\ |v_1\wedge v_2\wedge v_3|^{12/13}\Big)^{13/27} \sim \int_A \Big(\sum_{\substack{T_1,T_2,T_3\in\tubes\\ v_1\wedge v_2\wedge v_3 \sim\theta}
} \chi_{T_1}\ \chi_{T_2}\ \chi_{T_3}\ |v_1\wedge v_2\wedge v_3|^{12/13}\Big)^{13/27}, $$

\noindent and so that each $Q \in \QQQ$ makes a roughly equal contribution to the right-hand side.  

Let $\mathcal{G}$ be a grains decomposition of $\QQQ$ with error $\epsilon$ and degree $d=d(\epsilon)$.  Next we consider how much a tube intersects a grain.  The intersection of a tube with a grain could have a few different connected components, perhaps with different lengths.  Since the grain is a semi-algebraic set of complexity at most $d$, the number of these components is $O_d(1)$, and there is no harm in treating them separately.  If $W\subset\RR^4$, then we write $\operatorname{CC}(W,x)$ for the Euclidean connected component of $W$ containing $x$.  If $G$ is a grain and $T$ is a tube, then note that $\operatorname{diam}(\operatorname{CC}(T\cap G,x ))$ is the length of the component of $T \cap G$ thru the point $x$.  Next we dyadic pigeonhole these lengths:  we can find $\ell_1,\ell_2,\ell_3$ so that
\begin{equation}\label{pigeonholeEllI}
(\operatorname{LHS}\ \eqref{trilinearBoundFixedP})\lessapprox\sum_{G\in\mathcal{G}}\int_{G\cap A} \Big(\sum_{\substack{T_1,T_2,T_3\in\tubes\\v_1\wedge v_2\wedge v_3 \sim\theta\\ \ell_i\leq \operatorname{diam}(\operatorname{CC}(T_i\cap G,x ))\leq 2\ell_i } } \theta^{12/13}\ \chi_{T_1}\ \chi_{T_2}\ \chi_{T_3}\Big)^{13/27}.
\end{equation} 

Without loss of generality, we can assume that $\ell_1=\max(\ell_1,\ell_2,\ell_3)$. Cover each grain $G$ by balls of radius $C\ell_1$ so that the balls are $O(1)$ overlapping, and any subset of $G$ of diameter $\leq 2\ell_1$ is entirely contained within one of the balls. The intersection of $G$ with a ball of this type will be called a sub-grain $G^\prime$ with parent $G$. If $G^\prime$ is a sub-grain of $G$, let $\QQQ_{G^\prime}=\{Q\in\QQQ_G\colon Q\subset G^\prime\}$. Let $\mathcal{G}^\prime$ denote the set of sub-grains.

Note that if $T$ is a $\delta$--tube and $G\in\mathcal{G}$ is a grain satisfying $T\cap G\neq\emptyset$, then there are $O_d(1)$ sub-grains $G^\prime\in\mathcal{G}^\prime$ with $G^\prime\subset G$ that contain a point $x\in T\cap G^\prime$ with $\operatorname{diam}(\operatorname{CC}(T_i\cap G),x)\leq 2\ell_i$. This is because $T\cap G$ is a semi-algebraic set of bounded-complexity, so it is a union of $O_d(1)$ connected sets (see \cite{BCR} for further information and background on semi-algebraic sets). Each of these sets with diameter $\leq 2\ell_i$ intersects at most $O(1)$ sub-grains $G^\prime$. Furthermore, if there exists $x\in T\cap G$ with $\ell_i\leq \operatorname{diam}(\operatorname{CC}(T_i\cap G),x)\leq 2\ell_i$, then there exists at least one sub-grain $G^\prime$ with $c_d\ell_i\leq \operatorname{diam}(\operatorname{CC}(T_i\cap G^\prime),x)\leq 2\ell_i$, where $c_d>0$ depends only on $d$. For each sub-grain $G^\prime$, define
$$
\tubes_{i,G^\prime}=\{T\cap G^\prime\colon\ \textrm{there exists a component}\ W\subset T\cap G\ \textrm{with}\ W\subset G^\prime,\ c_d\ell_i\leq \operatorname{diam}(W)\leq 2\ell_i\}.
$$

Thus for each $i=1,2,3$,
\begin{equation}\label{smallGrainsComparableLarge}
\sum_{G\in\mathcal{G}}|\{ T\in\tubes\colon \textrm{there exists a component}\ W\subset T\cap G\ \textrm{with}\ \ell_i\leq \operatorname{diam}(W)\leq 2\ell_i\}|\sim \sum_{G^\prime\in\mathcal{G}^\prime}|\tubes_{i,G^\prime}|.
\end{equation}

After dyadic pigeonholing and refining the grains in $\mathcal{G}^\prime$, we can assume that \eqref{mostCubesCaptured}, \eqref{grainsSameSize}, and \eqref{notTooManyTubesPerGrain} hold for $\mathcal{G}^\prime$ (doing this may have made $|\mathcal{G}^\prime|$ smaller by a factor of $\lessapprox 1$). Furthermore, since each grain $G^\prime\in\mathcal{G}^\prime$ is contained in a ball of radius $\ell_1$, we have 
\begin{equation}\label{grainSize}
|G^\prime|\lesssim \ell_1^3\delta
\end{equation}
for all $G^\prime\in\mathcal{G}^\prime$, where the implicit constant depends only on $d(\epsilon),$ which in turn depends only on $\epsilon$ (see e.g.~\cite{Won}).

After further dyadic pigeonholing the grains $G\in\mathcal{G}^\prime$, we can find numbers $N_1,N_2,N_3$ so that $N_i\leq |\tubes_{i,G^{\prime}}|\leq 2N_i$ for each $G^{\prime}\in\mathcal{G}^{\prime}$.  Since $\mathcal{G}^\prime$ obeys \eqref{notTooManyTubesPerGrain},
\begin{equation*}
N_i|\mathcal{G}^{\prime}|\lessapprox C_\epsilon\delta^{-\epsilon} |\tubes|\ |\mathcal{G}^{\prime}|^{1/4},
\end{equation*}
i.e.
\begin{equation}\label{boundOnNi}
N_i \lessapprox  C_\epsilon\delta^{-\epsilon}|\tubes|\ |\mathcal{G}^{\prime}|^{-3/4},\qquad i=1,2,3.
\end{equation}

Let $\mathcal{Q}^{\prime}=\bigcup_{\mathcal{G}^{\prime}}\QQQ_G$. After dyadic pigeonholing, we can find numbers $\mu$ and $\mu_1,\mu_2,\mu_3$ so that if we refine $\QQQ^{\prime}$ and the associated sets $\QQQ_{G^\prime}$, then if $Q\in\cup\QQQ_{G^\prime}$ for some $G^\prime\in\mathcal{G}^{\prime}$ and if $x\in Q$, then
$$
\sum_{\substack{(T_1,T_2,T_3)\in\tubes_{1,G^\prime}\times \tubes_{2,G^\prime}\times \tubes_{3,G^\prime}\\v_1\wedge v_2\wedge v_3 \sim\theta} }\chi_{T_1}(x)\chi_{T_2}(x)\chi_{T_3}(x)\sim \mu^3,
$$
and $\sim\mu_i$ tubes from $\tubes_i$ pass through $x$ for each $i=1,2,3$. Note that
\begin{equation}\label{muPrimeVsMu123}
\mu \leq(\mu_1\mu_2\mu_3)^{1/3}. 
\end{equation}
\begin{remark}
The LHS of \eqref{muPrimeVsMu123} might be much smaller than the RHS if (for example) the main contribution to \eqref{trilinearBoundFixedP} from a typical point comes from triples with $v_1\wedge v_2\wedge v_3\sim 1$, but at the same time most triples through a typical point have $v_1\wedge v_2\wedge v_3$ much smaller than 1.
\end{remark}

After a further refinement of $\mathcal{G}^{\prime}$ to throw away those grains for which $|\QQQ_G|$ is small, we can assume that $|\QQQ_G|\approx |\QQQ^{\prime}|/|\mathcal{G}^{\prime}|$ for all $G\in\mathcal{G}^{\prime}$. 

Let $A^{\prime}=\bigcup_{\mathcal{Q}^{\prime}}Q$. Then $|A^{\prime}|\approx |A|$.  Because of all the dyadic pigeonholing, we have

\begin{equation} \label{simpLHS} (\operatorname{LHS}\ \eqref{trilinearBoundFixedP}) \approx |A^{\prime}| \mu^{13/9} \theta^{4/9}. \end{equation}

To prove \eqref{trilinearBoundFixedP}, it suffices to show that

\begin{equation} \label{trilineargoal} |A^\prime|^9 \mu^{13} \theta^4 \lesssim K \delta^{-3 - 9 \eps} (\delta^3 | \tubes| )^12. \end{equation}

Let $i_0$ be the index so that $\mu_{i_0}=\max(\mu_1,\mu_2,\mu_3)$. So in particular, \eqref{muPrimeVsMu123} implies
\begin{equation}\label{muLeqMuI0}
\mu \leq\mu_{i_0}.
\end{equation}

We note that 
\begin{equation}
\begin{split}
\mu_i|A^{\prime}| &\lesssim \sum_{G^\prime\in\mathcal{G}^{\prime}} \sum_{T\in\tubes_{i,G}}|T\cap A^{\prime}\cap G^\prime|\\
&\leq \sum_{G^\prime\in\mathcal{G}^{\prime}} \sum_{T\in\tubes_{i,G^\prime}}|T\cap G^\prime|\\
&\leq  |\mathcal{G}^{\prime}|N_i\ell_i\delta^3\\
&\lessapprox  C_\epsilon\delta^{-\epsilon}|\mathcal{G}^{\prime}|^{1/4}\ell_i(\delta^3|\tubes|),
\end{split}
\end{equation}
where on the last line we used \eqref{boundOnNi}. Thus
$$
\ell_i\gtrapprox  c_\epsilon\delta^{\epsilon}\mu_i\ |A^{\prime}|\ |\mathcal{G}^{\prime}|^{-1/4}(\delta^3|\tubes|)^{-1}.
$$
In particular, since $\ell_1=\max(\ell_1,\ell_2,\ell_3)$, we have
\begin{equation}\label{ell1MuI0}
\ell_1\gtrapprox  c_\epsilon\delta^{\epsilon}\mu_{i_0}|A^\prime|\ |\mathcal{G}^{\prime}|^{-1/4}(\delta^3|\tubes|)^{-1}.
\end{equation}

On the other hand, since the tubes from $\tubes$ satisfy the polynomial Wolff axioms, by \eqref{grainSize} we have
\begin{equation}\label{boundOnN1}
\begin{split}
N_1& \lesssim K(\ell_1^3)(\delta)\delta^{-3}\ell_1^{-4}\\
&=K\ell_1^{-1}\delta^{-2}\\
&\lessapprox C_\epsilon\delta^{-\epsilon}K\mu_{i_0}^{-1}|A^\prime|^{-1}\ |\mathcal{G}^{\prime}|^{1/4}(\delta^3|\tubes|)\delta^{-2}.
\end{split}
\end{equation}

\begin{remark}\label{whereWolffAxiomsAreUsed}
Observe that we have not used the full strength of the polynomial Wolff axioms. Instead, we have only used the fact that if $Z\subset\RR^4$ is a hypersurface of degree at most $d$, and if $B(x,r)$ is a ball of radius $r$, then 
\begin{equation}\label{restrictedPolyWolffAxioms}
|\{T\in\tubes\colon |T\cap B(x,r)\cap N_{\delta}(Z)|\gtrsim r|T|\}|\leq K_d r^{-1}\delta^{-2}.
\end{equation}
This condition can even be weakened slightly further: we can replace the bound \eqref{restrictedPolyWolffAxioms} with the requirement that for each $w>0$, there exists a constant $K_{d,w}$ so that   
\begin{equation}\label{restrictedPolyWolffAxiomsWeakened}
|\{T\in\tubes\colon |T\cap B(x,r)\cap N_{\delta}(Z)|\gtrsim r|T|\}|\leq K_{d,w} r^{-1}\delta^{-2-s}.
\end{equation}

The precise assumptions on $\tubes$ needed to prove Theorem \ref{mainThm} will be discussed further in Section \ref{minimalConditions} below.
\end{remark}

Let $G^\prime\in\mathcal{G}^\prime$. We have
\begin{equation}
\begin{split}
|A^\prime|\ |\mathcal{G}^\prime|^{-1}\mu^{3/2}\theta^{1/2}&\approx\int_{G^\prime\cap A^\prime} \Big(\sum_{\substack{T_1,T_2,T_3\in\tubes_{1,G^\prime}\times\tubes_{2,G^\prime}\times\tubes_{3,G^\prime}\\v_1\wedge v_2\wedge v_3 \sim\theta\\ } }\theta \chi_{T_1}\chi_{T_2}\chi_{T_3}\Big)^{1/2}\\
&\lesssim \delta^4(N_1N_2N_3)^{1/2}\\
&\lessapprox \delta(\delta^3|\tubes|) |\mathcal{G}^\prime|^{-3/4} N_1^{3/8}N_1^{1/8}\\
&\lessapprox \delta (\delta^3|\tubes|)|\mathcal{G}^\prime|^{-3/4} ( |\tubes|\ |\mathcal{G}^\prime|^{-3/4})^{3/8}(C_\epsilon\delta^{-\epsilon}\mu_{i_0}^{-1}K|A^\prime|^{-1}|\mathcal{G}^\prime|^{1/4}(\delta^3|\tubes|)\delta^{-2})^{1/8}\\
&=C_\epsilon\delta^{-\epsilon/8}\delta^{-3/8}|\mathcal{G}^\prime|^{-1}\mu_{i_0}^{-1/8}|A^\prime|^{-1/8}(\delta^3|\tubes|)^{3/2}K^{1/8},
\end{split}
\end{equation}
where on the third line we used Theorem \ref{BourgainGuthTHhm}, on the fourth line we used \eqref{boundOnNi}, and on the fifth line we used \eqref{boundOnNi} and \eqref{boundOnN1}.

Re-arranging and using \eqref{muLeqMuI0} and the fact that $|A^\prime|\approx|A|$, we have 
$$
|A|^9\mu^{13}\theta^4\lessapprox \delta^{-9\epsilon/8} K\delta^{-3}K(\delta^3|\tubes|)^{12}.
$$
Thus if we choose $C^\prime_{\epsilon}$ sufficiently large (depending only on $\epsilon$), then
\begin{equation}
|A|^9\mu^{13}\theta^4\leq C^\prime_{\epsilon} K\delta^{-3-9\epsilon}K(\delta^3|\tubes|)^{12}.
\end{equation}
This establishes \eqref{trilineargoal} and completes the proof of Proposition \ref{techVersion3lin}.
\end{proof}
\section{Volume bounds for unions of plainy tubes}\label{plainyTubesSec}
In this section we will prove the following volume bound for unions of tubes that satisfy the linear Wolff axioms.

\begin{theorem}\label{plainyTubesBound}
Let $\tubes$ be a set of $\delta$--tubes in $\RR^4$. For each $T\in\tubes$, let $Y(T)\subset T$ with $|Y(T)|\geq \lambda|T|$. Suppose that each tube satisfies the two-ends condition with exponent $\epsilon_0$ and error $\alpha$, and that $\tubes$ satisfies the following two properties: 
\begin{itemize}
\item Linear Wolff axioms: For each rectangular prism $S$ of dimensions $1\times t_1\times t_2\times t_3$ (with arbitrary orientation), we have
\begin{equation}\label{nonConcentrationOnPrisms}
|\{T\in\tubes\colon T\subset S\}|\leq K t_1t_2t_3\delta^{-3}.
\end{equation}
 
\item $\theta$--planiness:
There is a number $\delta\leq\theta $ so that for each $x\in\RR^4$ there is a two-dimensional plane $\Pi$ containing $x$ so that
\begin{equation}\label{quantitativeTrilinearity} 
\angle(T,\Pi)\leq\theta\ \textrm{for all}\ T\in\tubes\ \textrm{with}\ x\in Y(T).
\end{equation}

\end{itemize}

Then 
\begin{equation} \label{volumeBound1}
\Big|\bigcup_{T\in\tubes}Y(T)\Big| \ge c_{\epsilon,\epsilon_0}\alpha^{-C^\prime_{\epsilon,\epsilon_0}}\lambda^{3}K^{-1}\theta^{-1/9}\delta^{1+\epsilon}(\delta^3|\tubes|).\end{equation}
\end{theorem}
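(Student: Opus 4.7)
My plan is to combine the grains decomposition of Proposition~\ref{existsGrainsDecomp} with a careful exploitation of the linear Wolff axioms on ``planar'' prisms, following the structural outline of the proof of Proposition~\ref{techVersion3lin} adapted to the setting where we have only one family of tubes but an extra planiness hypothesis. The planiness condition will be the mechanism producing the gain $\theta^{-1/6}$ over the baseline Wolff exponent $\delta^{1}$.

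First, I would carry out the standard dyadic pigeonholing steps so that, after refinement, $\sum_{T\in\tubes}\chi_{Y(T)}\sim \mu\chi_A$ on some set $A$; then $|A|\,\mu\sim\lambda(\delta^3|\tubes|)$, and the desired volume bound is equivalent to the multiplicity bound $\mu\lessapprox K\lambda^{-2}\theta^{1/6}\delta^{-1-\epsilon}$. The two-ends hypothesis, together with the rescaling tools already developed in Section~\ref{rescalingArgumentsSec}, lets me remove the usual concentration pathologies. I would then apply the grains decomposition to the $\delta$-cubes covering $A$, producing a family $\mathcal{G}$ of grains (each a $\delta$-neighborhood of a variety of dimension $\leq 3$ of bounded complexity) on which the cubes are evenly distributed, with each tube intersecting at most $\lessapprox|\mathcal{G}|^{1/4}$ grains. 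After further pigeonholing (as in the proof of Proposition~\ref{techVersion3lin}), I may assume the tube-components in each grain share a common length $\ell$ and that every grain carries the same number $N\lessapprox\delta^{-\epsilon}|\tubes|\,|\mathcal{G}|^{-3/4}$ of components.

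The heart of the argument is to use planiness grain-by-grain. A tube whose component in a grain $G$ has length $\ell$ must be nearly tangent for length $\ell$ to the underlying $3$-variety $Z$ of $G$; planiness then forces $\Pi_x$, for $x\in G$, to lie (up to angle $\theta$) inside the tangent hyperplane $T_xZ$. As a result the tubes passing through $x$ inside $G$ are contained in a prism of dimensions roughly $1\times\ell\times\theta\times\delta$, and the linear Wolff axioms \eqref{nonConcentrationOnPrisms} applied to this prism give the \emph{planiness-improved} count $\lesssim K\ell\theta\delta^{-2}$. I would then combine this local count with the global counts from the grains decomposition via a H\"older inequality, with exponents chosen to balance $\mu$, $N$, $\ell$, and $|\mathcal{G}|$; the exponent $1/6$ is the one that makes the resulting product on the right-hand side work out in favor of the target bound.

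I expect the main obstacle to be in coupling the pointwise planiness condition to the grain-by-grain analysis. Planiness only tells us that $\Pi_x$ exists at every $x$; there is no a priori regularity of the map $x\mapsto\Pi_x$, and the grains decomposition only controls tubes near a variety. To make the ``$\Pi_x\subset T_xZ$'' step above rigorous, one likely needs an additional pigeonholing over a $\theta$-net in the Grassmannian of $2$-planes in $\RR^4$, restricting to a sub-shading where $\Pi_x$ varies in a single cap; one must then argue this refinement loses only $\lessapprox 1$. A secondary obstacle is absorbing the $\delta^{-\epsilon}$ loss from the grains decomposition itself into the final $\delta^{1+\epsilon}$ exponent without spoiling the $\theta^{-1/6}$ gain, which may require an induction-on-scale similar to the one used to pass from Proposition~\ref{mainPropQuantTrans} to Proposition~\ref{messyMainProp}.
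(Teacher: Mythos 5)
Your proposal takes a genuinely different route from the paper, and unfortunately it has a gap at its central step. The paper does not use the grains decomposition to prove Theorem~\ref{plainyTubesBound}; instead it works at two scales, $\delta$ and $\theta$: it replaces each $\delta$-tube by the ``fat'' $\theta$-tube containing it, applies Wolff's $\RR^4$ Kakeya estimate (Theorem~\ref{WolffThm}) inside each fat tube to control a fine-scale multiplicity $\mu_{\ofine}$, and observes that the $\theta$-planiness hypothesis forces the hairbrush of every fat tube to lie in a union of fat planes of small volume. This last observation feeds into a refined hairbrush estimate (Proposition~\ref{smallHairbrushProp}), giving a coarse-scale bound $\mu_{\ocoarse}$. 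The final bound is the geometric mean of two different upper bounds on $\mu\lessapprox\mu_{\ofine}\mu_{\ocoarse}$, and the exponent $1/6$ comes out of balancing the auxiliary parameters $A$ and $B$, not from a single H\"older inequality.

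The step in your proposal that I do not believe is the claim that, for $x$ in a grain $G$, the tubes through $x$ are contained in a prism of dimensions $1\times\ell\times\theta\times\delta$, so that \eqref{nonConcentrationOnPrisms} yields a count of $K\ell\theta\delta^{-2}$. The planiness hypothesis only constrains the \emph{direction} of each tube through $x$ to lie within $\theta$ of the $2$-plane $\Pi_x$; since the tubes have length $1$, this confines each whole tube to a $1\times 1\times\theta\times\theta$ prism at best, not to a prism with an $\ell$ or a $\delta$ in its dimensions. The shorter length $\ell$ is a property of the \emph{component} of a tube inside the grain, not of the tube itself, and the linear Wolff axioms as formulated in \eqref{nonConcentrationOnPrisms} count tubes fully contained in a prism, not tube-segments or tubes that merely intersect a small set near a variety. (The localized variety count that the grains decomposition exploits in the trilinear argument comes from the \emph{polynomial} Wolff axioms, which you do not have under the hypotheses of Theorem~\ref{plainyTubesBound}.) Moreover, the inference ``$\Pi_x\subset T_xZ$ up to angle $\theta$'' needs quantitative transversality of the tube directions through $x$ within $\Pi_x$; without it, all the tubes in $G$ through $x$ could be clustered around the single line $\Pi_x\cap T_xZ$, and $\Pi_x$ could stick far out of $T_xZ$. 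Even if these issues could be repaired, the asserted $\theta^{-1/6}$ gain from a H\"older balance is not computed, and it is unclear the grains decomposition produces any leverage in the planar regime, since its strength in the trilinear argument relies on transversality (via Theorem~\ref{BourgainGuthTHhm}) which is precisely absent here. I would encourage you to look at how planiness at scale $\theta$ translates into a statement about hairbrushes of $\theta$-tubes lying in few $2$-planes; that is the mechanism the paper's proof actually uses.
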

\begin{remark}
Taking $\theta=\delta$, Theorem \ref{plainyTubesBound} heuristically says that if $\tubes$ is a set of $\delta^{-3}$ tubes that satisfy the linear Wolff axioms, and if the tubes passing through a typical point lie in the $\delta$--neighborhood of a plane, then the union of the tubes has volume $\geq\delta^{8/9}$. This corresponds to a Hausdorff dimension bound of $3 + 1/9$. 
\end{remark}

\begin{remark}
We will actually prove a bound with a slightly better dependence on $\lambda$ than the bound given by \eqref{volumeBound1}. This phenomena often arises when one considers a collection of tubes that satisfy the two-ends condition. 
\end{remark}

\begin{defn}
We make the following notation for the remainder of Section \ref{plainyTubesSec}.  If we have a bound of the form 

\begin{equation}\label{newDefnGtrapprox}
A \ge c_{\epsilon,\epsilon_0}\alpha^{-C^\prime_{\epsilon,\epsilon_0}} \delta^\eps B,
\end{equation}

\noindent which holds for any $\epsilon, \epsilon_0 > 0$ and $\alpha$, then we write

$$ A \gtrapprox_{\eps_0,\alpha} B.$$

In the above expression, $\eps>0$ is arbitrary, while $\eps_0$ and $\alpha$ quantify the extent to which the collection of tubes (and their associated shading) satisfies the two-ends condition. Whenever the symbol $\gtrapprox$ is used, the set of tubes under consideration will be clear from context.  

For example, \eqref{volumeBound1} can be abbreviated as

$$ \Big|\bigcup_{T\in\tubes}Y(T)\Big| \gtrapprox_{\eps_0,\alpha} \lambda^{3}K^{-1}\theta^{-1/9}\delta (\delta^3|\tubes|). $$

\end{defn}

\subsection{Reduction to the $K=1$ case}
\begin{prop}\label{badKPlainyBound}
Let $\tubes$ be a set of $\delta$--tubes in $\RR^4$ that satisfy the hypotheses of Theorem \ref{plainyTubesBound} with $K=1$ in \eqref{nonConcentrationOnPrisms}. Then
\begin{equation} \label{volumeBoundBadK}
\Big|\bigcup_{T\in\tubes}Y(T)\Big|\gtrapprox_{\eps_0,\alpha} \lambda^{3}\theta^{-1/9}\delta(\delta^3|\tubes|).
\end{equation}
\end{prop}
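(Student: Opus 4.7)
The plan is to follow the same overall template as the proof of Proposition \ref{techVersion3lin}: standard reductions, then a grains decomposition, then a Kakeya-type inequality inside each grain, with the linear Wolff axioms used to control the number of tubes per grain. The difference is that planiness replaces trilinear transversality, so the input is a \emph{bilinear} Kakeya estimate rather than a trilinear one, and the $K=1$ hypothesis is used to get a clean slab count for the tubes that pierce each grain.

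First, I would perform the usual dyadic pigeonholing on all parameters: the multiplicity $\mu$, the component diameters $\ell$ of $T\cap G$, and the shading density, so as to reduce to a situation where $\sum_{T\in\tubes}\chi_{Y(T)}\sim \mu\chi_A$ on a set $A$ of $\delta$-cubes, and the two-ends condition gives the expected identification $\mu|A|\sim\lambda(\delta^3|\tubes|)$ up to factors of $\alpha^{-O_{\epsilon,\epsilon_0}(1)}\delta^{-\epsilon}$. Then I would apply Proposition \ref{existsGrainsDecomp} to the collection $\QQQ$ of $\delta$-cubes in $A$, producing a set of grains $\mathcal{G}$ of degree $d=d(\epsilon)$, and further dyadic pigeonhole so that each grain $G$ has the same parameters: $|\QQQ_G|\approx|\QQQ|/|\mathcal{G}|$, and each of the $N$ tubes crossing $G$ has a component of common diameter $\ell$.

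Second, inside a typical grain $G$, I would use the planiness hypothesis to show that the tubes meeting $G$ are confined to the $(\theta+\ell)$-neighborhood of a single $2$-plane $\Pi_G$: after refining $G$ to a small enough sub-grain, the plane field $\Pi(x)$ varies by $O(\ell)$ across $G$, so any tube with a component of diameter $\ell$ in $G$ must lie in this slab. The linear Wolff axioms with $K=1$ then give $N\lesssim(\theta+\ell)^2\delta^{-3}$, which is the source of the $\theta$-dependence. Within $G$, I would split the tubes through a typical point into two sub-families whose directions span $\Pi_G$ with quantitative wedge product, and apply the bilinear case of Theorem \ref{BourgainGuthTHhm}. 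Combined with the two-ends condition, this gives a lower bound for $|G\cap A|\mu$ in terms of $N$, $\ell$, and $\theta$.

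Finally, summing over grains and balancing the grain scale $\ell$ against the planiness scale $\theta$ should produce the $\theta^{-1/6}$ factor through an optimization entirely analogous to the $\delta^{-1/3}$ optimization in the proof of Proposition \ref{techVersion3lin}. The main obstacle will be the splitting step: without any a priori direction separation, there is no reason the plainy tubes through a point must admit a quantitatively transverse bilinear partition in $\Pi_G$. Handling this requires a further hairbrush- or bush-type pigeonhole over directions in $\Pi_G$, and it is precisely this step that produces the $\alpha^{-C'_{\epsilon,\epsilon_0}}$ loss recorded in the $\gtrapprox_{\epsilon_0,\alpha}$ notation of the statement. A subtler difficulty is that the ``planiness is essentially constant on grains'' argument needs the plane field $\Pi(x)$ only to satisfy the given pointwise statement, so I would have to phrase the refinement in terms of the shading $Y(T)$ rather than in terms of any regularity of $\Pi$.
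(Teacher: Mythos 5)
Your proposal takes a genuinely different route from the paper, and it has two serious gaps that I do not see how to close.

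The paper does not use a grains decomposition here at all. Instead, Proposition \ref{badKPlainyBound} is proved by a two-scale analysis: each $\delta$-tube is fattened to a $\theta$-tube, and one studies the arrangement simultaneously at scale $\delta$ (inside each fat tube, where Wolff's Kakeya bound, Theorem \ref{WolffThm}, controls the fine multiplicity $\mu_{\ofine}$) and at scale $\theta$ (where the fat tubes are analyzed via a hairbrush argument). The crucial new ingredient is Proposition \ref{smallHairbrushProp}, a variant of Wolff's hairbrush argument in which the hairbrush of a tube $T_0$ is assumed to lie in a union of planes through the axis of $T_0$ whose total volume $\rho$ is small; this small-volume hypothesis is exactly what the planiness condition supplies at the coarse scale, and it is the source of the $\theta^{-1/6}$ gain (via the geometric mean of the two resulting bounds \eqref{muest1} and \eqref{muest2} on $\mu$). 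Nothing in your outline reproduces this mechanism.

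Concretely, the first gap is the claim that, after refining to a small sub-grain, ``the plane field $\Pi(x)$ varies by $O(\ell)$ across $G$.'' The hypothesis \eqref{quantitativeTrilinearity} is a pointwise statement about tubes whose shading contains $x$; it gives no modulus of continuity for $x\mapsto\Pi_x$, and in general $\Pi_x$ can jump discontinuously as the set $\{T: x\in Y(T)\}$ changes. You flag this yourself at the end, but acknowledging it does not repair it, and the rest of your step two (confining the tubes meeting $G$ to a single slab, then invoking $K=1$ to bound $N$) depends on it. The paper avoids this issue precisely because it never needs any regularity of $\Pi_x$; it only tracks, at each $\theta$-cube $Q$, the finite set of fat planes that contain the fat tubes through $Q$, and then observes that the hairbrush of a fat tube lies in a union of these fat planes whose combined volume is controlled by $\theta^2\mu_{\ocoarse}D^{-1}B$.

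The second gap is the optimization. In the trilinear proof, the key trade-off is between the grain scale $\ell_1$ and the tube count per grain $N_1$, leading to the exponent $-1/3$ via Theorem \ref{BourgainGuthTHhm}. Here the analogous trade-off is between the fine-scale Wolff bound $\mu_{\ofine}\lessapprox\lambda^{-1/2}\theta\delta^{-1}A^{-1/2}(\delta^3|\tubes|)^{1/2}$ and the coarse-scale small-hairbrush bound $\mu_{\ocoarse}\lessapprox\lambda^{-4/3}\theta^{-2/3}AB^{1/3}(\delta^3|\tubes|)^{-2/3}$, and the $\theta^{-1/6}$ emerges from balancing the free parameters $A$ and $B$. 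A bilinear Kakeya bound applied inside a grain confined near a $2$-plane essentially reduces to a two-dimensional Kakeya estimate and does not produce a $\theta$-dependent gain of this form; you would need to explain why summing such bounds over grains beats the trivial planar volume count by a factor $\theta^{-1/6}$, and I do not see how to do that without an analogue of the small-hairbrush lemma. If you want to salvage the grains framework for the bilinear/plainy setting, the place to look is whether a $2$-dimensional grains decomposition can be combined with the hairbrush-through-small-volume idea, but that would be a substantial restructuring rather than a straight port of the trilinear template.
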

\begin{proof}[Proof of Theorem \ref{plainyTubesBound}, using Proposition \ref{badKPlainyBound}]
First, observe that there exists a set $\mathcal{R}$ of rectangular prisms in $\RR^4$ so that $|\mathcal{R}|\leq \delta^{-64}$ and for any prism $R\subset B(0,2)$ of dimensions $1\times t_1\times t_2\times t_3$ with $\delta\leq t_i\leq 2,\ i=1,2,3$, there is a prism $R^\prime\in \mathcal{R}$ with $\frac{1}{2}R^\prime\subset R\subset 2R^\prime$. For example, let $\mathcal{R}$ be the set of prisms whose 16 corners lie in $(\delta\ZZ)^{4}$. 

Next, let $\tubes^\prime\subset\tubes$ be obtained by randomly selecting each tube in $\tubes$ with probability $K^{-1}/C$, where $C$ is a large constant. Note that each prism in $\mathcal{R}$ contains $\leq Kt_1t_2t_3\delta^{-3}$ tubes from $\tubes.$ Thus, if $C$ is chosen sufficiently large, then with high probability, each of the prisms in $\mathcal{R}$ contains $\leq t_1t_2t_3\delta^{-3}$ tubes from $\tubes^\prime$.

Apply Proposition \ref{badKPlainyBound} to $\tubes^\prime$. We conclude that
$$
\Big|\bigcup_{T\in\tubes}Y(T)\Big|\geq \Big|\bigcup_{T\in\tubes^\prime}Y(T)\Big| \gtrapprox_{\eps_0,\alpha} \lambda^{3}\theta^{-1/9}\delta(\delta^3|\tubes|/(CK)).\qedhere
$$
\end{proof}

Before proving Theorem \ref{plainyTubesBound}, we will establish a key estimate on the volume of unions of tubes, in the special case where the ``hairbrush'' of a typical tube can be contained in the union of a small number of planes.

\subsection{Small hairbrush volume estimates} 
Wolff's ``hairbrush'' argument shows that the union of a set of $\theta^{-3}$ $\theta$--tubes in $\RR^4$ satisfying the linear Wolff axioms must have volume  $\gtrapprox \theta$. In brief, the argument is as follows: If the union of the tubes has small volume, then many tubes must pass through each point of the union. Let $T_0$ be a typical tube, and consider the set of all tubes intersecting $T_0$; the set of tubes intersecting $T_0$ is called the hairbrush of $T_0$. If there are many tubes passing through each point of the union (and thus many tubes passing through each point of $T_0$), then the hairbrush must have large cardinality. However, the tubes in the hairbrush of $T_0$ are almost disjoint. This implies that their union must be large, and thus the union of all of the tubes must be large. 

In the following proposition, we will show that if the hairbrush of each tube has a certain special property, then we can use Wolff's hairbrush argument to get a stronger conclusion. The special property is that the hairbrush of each tube is contained in a union of planes, and the combined volume of this union is small. A precise version is stated below.

\begin{defn}
Let $(\tubes,Y)$ be a set of tubes. If $T_0\in\tubes$, we define
$$
\operatorname{Hair}(T_0)=\{T\in\tubes\colon Y(T_0)\cap Y(T)\neq\emptyset\}.
$$
\end{defn}

\begin{prop}\label{smallHairbrushProp}
Let $(\tubes,Y)$ be a set of essentially distinct $\theta$ tubes in $\RR^4$, and suppose $|Y(T)|\geq\lambda|T|$ for each tube. Suppose that each tube satisfies the two-ends condition with exponent $\epsilon_0$ and error $\alpha$, and
\begin{itemize}
 \item At most $Et_1t_2t_3 \theta^{-3}$ tubes from $\tubes$ are contained in a rectangular prism of dimensions $1\times t_1\times t_2\times t_3$.
 \item For each $T_0\in\tubes$ and each plane $\Pi$ with $T_0\subset N_{\theta}(\Pi)$, there are at most $D$ tubes in $\{T\in\operatorname{Hair}(T_0)\colon T\subset N_{\theta}(\Pi)\}$ pointing in the same direction (we say that two tubes point in the same direction if the lines coaxial with the tubes make an angle $\leq\theta$).
 \item For each $T\in\tubes$, with central line $L$, the hairbrush $\operatorname{Hair}(T)$ is contained in a union of planes containing $L$, whose intersection with $B(0,2)$ has combined volume $\leq \rho$.
\end{itemize}

Then after a refinement of $(\tubes,Y)$, we have the pointwise bound
\begin{equation}\label{smallHairbrushCoarseEstimate}
\sum_{T_\theta\in\tubes_\theta}\chi_{Y(T_\theta)} \lessapprox_{\eps_0,\alpha}  \lambda^{-1}\theta^{-1} E^{1/2}D^{1/4}\rho^{1/4}(\theta^3|\tubes|)^{1/4}.
\end{equation}
\end{prop}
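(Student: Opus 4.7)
The plan is to adapt Wolff's two-ends hairbrush argument, using the extra structural hypothesis that $\operatorname{Hair}(T_0)$ is confined to a small-volume union of planes through $L(T_0)$. After a standard dyadic pigeonhole refinement, we may assume $\sum_T \chi_{Y(T)}\sim\mu\chi_B$ pointwise on a set $B$, so the pointwise bound \eqref{smallHairbrushCoarseEstimate} reduces to proving
$$\mu\lessapprox_{\epsilon_0,\alpha}\lambda^{-1}\theta^{-1}E^{1/2}D^{1/4}\rho^{1/4}(\theta^3|\tubes|)^{1/4}.$$
I would then fix a typical $T_0\in\tubes$ whose shading satisfies $|Y(T_0)\cap B|\gtrsim\lambda\theta^3$.

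The first main step is a lower bound on $|\operatorname{Hair}(T_0)|$. Since $\mu$ tubes pass through each point of $Y(T_0)\cap B$, summing gives
$$\sum_{T\in\operatorname{Hair}(T_0)}|Y(T)\cap Y(T_0)|\;\gtrsim\;\mu\lambda\theta^3.$$
Dyadically pigeonholing the angle $\eta_0$ between $T$ and $T_0$, and using that $T\cap T_0$ has diameter $\lesssim\theta/\eta_0$ together with the two-ends condition for $T$, each term is bounded by $\alpha(\theta/\eta_0)^{\epsilon_0}\lambda\theta^3$, yielding (at the optimal dyadic scale) $|\operatorname{Hair}(T_0)|\gtrapprox_{\epsilon_0,\alpha}\mu\lambda/\theta$.

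The second step is a Cauchy–Schwarz bound exploiting the structural hypothesis: since $\bigcup_{T\in\operatorname{Hair}(T_0)}Y(T)\subset\bigcup_i N_\theta(\Pi_i)$ and the latter has volume $\leq\rho$,
$$\lambda\theta^3|\operatorname{Hair}(T_0)|\;=\;\sum_{T\in\operatorname{Hair}(T_0)}|Y(T)|\;\leq\;\rho^{1/2}\Big(\sum_{T,T'\in\operatorname{Hair}(T_0)}|Y(T)\cap Y(T')|\Big)^{1/2}.$$
The pairwise-intersection sum is then split into same-plane and cross-plane pairs. For same-plane pairs, $T,T'\subset N_\theta(\Pi_i)$, and the linear Wolff axioms applied to a $1\times 1\times\theta\times\theta$ prism aligned with $\Pi_i$ bound the tubes per plane by $\lesssim E\theta^{-1}$; a 2D Kakeya-type sum of intersections then contributes the factor $E$. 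For cross-plane pairs, the crucial observation is $N_\theta(\Pi_i)\cap N_\theta(\Pi_j)\subset N_\theta(L(T_0))=T_0$, so all cross-plane pairwise intersections lie inside $T_0$; the pointwise multiplicity bound and the two-ends condition then control their contribution. The direction hypothesis $D$ enters as a per-plane bound $|\operatorname{Hair}(T_0)\cap N_\theta(\Pi_i)|\lesssim D\theta^{-1}$ (at most $D$ tubes per direction, $\theta^{-1}$ directions in $\Pi_i$).

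The last step combines the upper and lower bounds on $|\operatorname{Hair}(T_0)|$ and then averages over $T_0\in\tubes$. The averaging converts one factor of $|\operatorname{Hair}(T_0)|$ into $|\tubes|$ and is responsible for the $(\theta^3|\tubes|)^{1/4}$ term, the 4th root arising from the double Cauchy–Schwarz (once in the hairbrush lower bound, once in the volume/$L^2$ step). Tracking the powers of $E,D,\rho,\lambda,\theta$ through these two Cauchy–Schwarz steps yields the required exponents $E^{1/2}D^{1/4}\rho^{1/4}$. The main obstacle I expect is the cross-plane term in the $L^2$ estimate: one must use the concentration of $Y(T)\cap Y(T')$ near $L(T_0)$ together with two-ends for $T$ to avoid wasting a factor of $\mu$, so that the final exponent of $\rho$ is $1/4$ rather than $1$. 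Some careful bookkeeping of the $\delta^{-\epsilon}$ and $\alpha^{-C_{\epsilon_0}'}$ losses from each pigeonhole step is also needed to conform to the $\lessapprox_{\epsilon_0,\alpha}$ notation.
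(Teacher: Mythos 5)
Your proposal has the right outer shape (find a tube with a large hairbrush, exploit the small-volume plane hypothesis via Cauchy--Schwarz, run a 2D Kakeya/Cordoba estimate), but it omits the paper's central geometric device and, as written, cannot produce the exponents in \eqref{smallHairbrushCoarseEstimate}. The paper introduces the projection $\psi\colon\RR^4\setminus L_0\to\Pi_0$ that sends $x$ to the unique intersection of $\Pi_0$ with the $2$-plane spanned by $x$ and $L_0$. Its fibers are exactly the $2$-planes through $L_0$, so each $\Pi_i$ in the hypothesis collapses to a \emph{single point} of $\Pi_0$; since $\psi$ is Lipschitz on $B(0,2)\setminus N_r(T_0)$ (with $r\gtrapprox 1$ coming from the two-ends reduction), the image of the fat-plane union has $2$D measure $\lessapprox\rho$. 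The Cauchy--Schwarz step is then $\mu\,|Z_{T_0}|\lesssim\theta^2\rho^{1/2}\bigl\|\sum_{T\in\tubes}\chi_{\psi(Y'(T))}\bigr\|_{L^2(\Pi_0)}$, with the $L^2$ norm over \emph{all} tubes controlled by Cordoba's argument in $\Pi_0$ (this is where $E^{1/2}|\tubes|^{1/2}$ comes from, not from averaging over $T_0$). Combined with the hairbrush \emph{volume} lower bound $|Z_{T_0}|\gtrsim\lambda^2\theta^2\mu\max(E,D)^{-1}$, the inequality becomes quadratic in $\mu$, and it is the square root that produces $\rho^{1/4},D^{1/4},(\theta^3|\tubes|)^{1/4}$.

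Your version, $\lambda\theta^3|H|\leq\rho^{1/2}\bigl(\sum_{T,T'\in H}|Y(T)\cap Y(T')|\bigr)^{1/2}$, restricts the $L^2$ norm to the hairbrush and has $\mu$ on only one side (through $|H|\gtrapprox\mu\lambda/\theta$), so even in the best case it yields a bound linear in $\mu$ with $\rho^{1/2}$ and $|\tubes|^{1/2}$ rather than the claimed quarter powers; the ``averaging over $T_0$'' you invoke to convert $|H|$ into $|\tubes|$ is not what is needed and is not how the paper obtains the $|\tubes|^{1/4}$. In addition, the cross-plane claim $N_\theta(\Pi_i)\cap N_\theta(\Pi_j)\subset N_\theta(L_0)$ is false when the angle $\phi$ between $\Pi_i$ and $\Pi_j$ is small: the intersection reaches distance $\sim\theta/\phi$ from $L_0$, so cross-plane pairwise intersections are not confined to $T_0$ and the proposed dichotomy does not close. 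The projection $\psi$ is insensitive to inter-plane angles and avoids this issue entirely, which is why the paper uses it in place of a same/cross-plane split.
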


We first need to reduce to the case where most pairs of intersecting tubes point in two linearly independent directions:
\begin{prop}\label{smallHairbrushPropRobustTrans}
For every $\epsilon>0$, there is a constant $C_{\epsilon}$ so that the following holds. Let $(\tubes,Y)$ be a set of essentially distinct $\theta$ tubes in $\RR^4$, and suppose $|Y(T)|\geq\lambda|T|$ for each tube. Suppose that $(\tubes,Y)$ is $s$-robustly transverse (where $s$ is a small constant that depends only on $\epsilon$ and $\epsilon_0$) with error $1/100;$ each tube in $\tubes$ satisfies the two-ends condition with exponent $\epsilon_0$ and error $\alpha$; and 
\begin{itemize}
 \item At most $Et_1t_2t_3 \theta^{-3}$ tubes from $\tubes$ are contained in a rectangular prism of dimensions $1\times t_1\times t_2\times t_3$.
 \item For each $T_0\in\tubes$, each plane $\Pi$ with $T_0\subset N_{\theta}(\Pi)$, there are $\leq D$ tubes $\{T\in\operatorname{Hair}(T_0)\colon T\subset N_{\theta}(\Pi)\}$ pointing in the same direction.
 \item For each $T\in\tubes$, with central line $L$, the hairbrush $\operatorname{Hair}(T)$ is contained in a union of planes containing $L$, whose intersection with $B(0,2)$ has combined volume $\leq \rho$. \end{itemize}

Then after a refinement of $(\tubes,Y)$, we have the pointwise bound
\begin{equation}\label{smallHairbrushCoarseEstimateRobustTrans}
\sum_{T\in\tubes}\chi_{Y(T)} \lessapprox_{\eps_0,\alpha} \lambda^{-1}\theta^{-1} E^{1/2}D^{1/4}\rho^{1/4}(\theta^3|\tubes|)^{1/4}.
\end{equation}
\end{prop}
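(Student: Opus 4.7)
The plan is a Wolff-style hairbrush argument that combines the planar cover of each hairbrush with the bounded-per-direction hypothesis and the linear Wolff prism bound. First, I would perform dyadic pigeonholing so that $\sum_{T\in\tubes}\chi_{Y(T)}(x)\sim\mu$ for $x$ in a set $B\subset\bigcup_{T\in\tubes}Y(T)$ with $\mu|B|\approx\lambda\theta^3|\tubes|$; the pointwise bound will then follow from an upper bound on $\mu$. For a typical tube $T_0\in\tubes$, a standard hairbrush count --- using the two-ends condition, the shading lower bound $\lambda$, and the $s$-robust transversality to ensure that two intersecting tubes overlap in volume $\lesssim\theta^4/s$ --- yields
$$|\operatorname{Hair}_s(T_0)|:=|\{T\in\operatorname{Hair}(T_0)\colon\angle(T,T_0)\geq s\}|\gtrapprox_{\epsilon_0,\alpha}c_s\lambda\mu\theta^{-1}.$$

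Next, I would exploit the planar structure. The hairbrush lies inside $\bigcup_i N_\theta(\Pi_{T_0,i})$, where each $\Pi_{T_0,i}$ passes through the axis $L$ of $T_0$ and satisfies $|N_\theta(\Pi_{T_0,i})\cap B(0,2)|\sim\theta^2$, so there are $\lesssim\rho\theta^{-2}$ planes in the cover. Within each plane $\Pi_{T_0,i}$, the tubes of $\operatorname{Hair}_s(T_0)\cap N_\theta(\Pi_{T_0,i})$ are controlled simultaneously by the per-direction bound (at most $D$ tubes per direction) and by the linear Wolff prism bound (at most $E\alpha/\theta$ tubes at angles $\leq\alpha$ to $T_0$, coming from the sub-prism $1\times\alpha\times\theta\times\theta$). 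Combining these with the two-tube overlap $|T_0\cap T|\sim\theta^4/\alpha$, and dyadically summing over $\alpha\in[s,1]$, yields a per-plane estimate on $\sum_{T\in\operatorname{Hair}_s(T_0)\cap N_\theta(\Pi_{T_0,i})}|Y(T_0)\cap Y(T)|$. Summing across the $\lesssim\rho\theta^{-2}$ planes and across all $T_0\in\tubes$, and comparing against the global $L^2$ mass $\int\bigl(\sum_T\chi_{Y(T)}\bigr)^2\gtrsim\mu^2|B|$ (with the small-angle part $\angle<s$ absorbed by the robust transversality into at most $\tfrac{1}{100}\mu^2|B|$), produces an inequality that will yield the desired bound on $\mu$.

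The main obstacle will be landing on the precise exponents $E^{1/2}D^{1/4}\rho^{1/4}(\theta^3|\tubes|)^{1/4}$. The appearance of $(\theta^3|\tubes|)^{1/4}$ is not native to a plain $L^2$ Cauchy--Schwarz and strongly suggests an $L^4$-type refinement of the per-plane estimate --- most naturally obtained by coupling the planar counting with the Bourgain--Guth multilinear Kakeya inequality (Theorem \ref{BourgainGuthTHhm}) applied to transverse triples within each $\Pi_{T_0,i}$, so that the $(|\tubes_1||\tubes_2||\tubes_3|)^{1/2}$ factor from that theorem, after Hölder, contributes the $(\theta^3|\tubes|)^{1/4}$. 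Similarly, the combination $E^{1/2}D^{1/4}$ looks like a Hölder interpolation between the linear Wolff prism bound (weight $E$) and the per-direction bound (weight $D$). The delicate part is arranging the Cauchy--Schwarz and Hölder steps to balance these contributions exactly, while tracking the two-ends losses through the $\lessapprox_{\epsilon_0,\alpha}$ framework and absorbing the $O(|\log s|)$ factors coming from the dyadic decomposition.
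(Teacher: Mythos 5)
Your opening steps (dyadic pigeonhole to $\sum\chi_{Y'(T)}\sim\mu\chi_B$ and the hairbrush cardinality bound $|\operatorname{Hair}_s(T_0)|\gtrapprox_{\epsilon_0,\alpha}\lambda\mu\theta^{-1}$ for a typical $T_0$) match the paper. You also correctly identify that the planar cover gives $\lesssim\rho\theta^{-2}$ fat planes through $L_0$, and that $D$ and $E$ jointly bound the multiplicity inside each slab. But there is a genuine gap in the second half: the paper does not sum over $T_0$, does not compare against a global $L^2$ mass, and does not use multilinear Kakeya anywhere in this proposition. The crucial missing idea is the projection $\psi\colon\RR^4\setminus L_0\to\Pi_0$ onto a transverse $2$--plane. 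The hypothesis that $\operatorname{Hair}(T_0)$ lies in a union of $2$--planes through $L_0$ is precisely tailored to this map: those planes are the fibers of $\psi$, so the thickened hairbrush region $Z_{T_0}=\bigcup_{T\in\operatorname{Hair}(T_0)}Y'(T)$ projects to a set of area $\lessapprox\rho$ in $\Pi_0$ (after excising $N_r(T_0)$ with $r\gtrapprox 1$, where $\psi$ is bi-Lipschitz). That small image area is what makes $\rho$ enter the estimate; nothing in your double-counting sketch replaces this step.

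Once you have the projection, the mechanism producing the remaining exponents is elementary and has nothing to do with Bourgain--Guth. The paper first upgrades the hairbrush cardinality bound to a volume lower bound $|Z_{T_0}|\gtrapprox\lambda^2\theta^2\mu\,\max(E^{-1},D^{-1})$ by noting that, inside each slab through $L_0$, the tubes of $\operatorname{Hair}(T_0)$ have pointwise multiplicity $\lessapprox\min(D,E)$ (a standard $L^2$ overlap argument: at most $D$ tubes per direction, and the prism axiom caps the count of tubes at angle $\lesssim t$ by $Et\theta^{-1}$). Then it writes $\mu|Z_{T_0}|\sim\int_{Z_{T_0}}\sum_T\chi_{Y'(T)}\lesssim\theta^2\int_{\psi(Z_{T_0})}\sum_T\chi_{\psi(Y'(T))}$, applies Cauchy--Schwarz on $\Pi_0$ using $|\psi(Z_{T_0})|\lessapprox\rho$, and bounds $\|\sum_T\chi_{\psi(Y'(T))}\|_{L^2(\Pi_0)}\lessapprox\theta^{-1/2}E^{1/2}|\tubes|^{1/2}$ by the two-dimensional Cordoba argument (again leveraging the $E$--prism bound for tubes at small angle). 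Chaining $\mu|Z_{T_0}|\lessapprox\theta^{3/2}\rho^{1/2}E^{1/2}|\tubes|^{1/2}$ with the volume lower bound gives $\mu\lessapprox_{\epsilon_0,\alpha}\lambda^{-1}\theta^{-1}E^{1/2}D^{1/4}\rho^{1/4}(\theta^3|\tubes|)^{1/4}$ directly; the $(\theta^3|\tubes|)^{1/4}$ comes from the Cordoba $L^2$ bound, not from an $L^4$ or multilinear input. You should replace the proposed global-$L^2$-over-all-$T_0$ comparison and the multilinear Kakeya step with this single-tube projection-plus-Cordoba argument.
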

\begin{proof}[Proof sketch of Proposition \ref{smallHairbrushProp} using Proposition \ref{smallHairbrushPropRobustTrans}]
The proof of Proposition \ref{smallHairbrushProp} using Proposition \ref{smallHairbrushPropRobustTrans} closely mirrors the proof of Proposition \ref{messyMainProp} using Proposition \ref{mainPropQuantTrans}, so we will only provide a brief sketch. As in the proof of Proposition \ref{messyMainProp}, we obtain a dichotomy: either most of the tubes passing through most points of $\bigcup_{T\in\tubes}Y(T)$ are $s$--robustly transverse, or the tubes in $\tubes$ can be partitioned into sets, each of which is contained in an $s$--cylinder, and whose unions are almost disjoint. In the former case, we apply Proposition \ref{smallHairbrushPropRobustTrans}, which immediately proves \ref{smallHairbrushProp}. In the latter case, we re-scale each of the cylinders, and apply Proposition \ref{smallHairbrushProp} to the corresponding set of $\delta/s$--tubes.

The main thing to observe is how the hypotheses of Proposition \ref{smallHairbrushProp} change under the rescaling from $\delta$ to $\delta/s$. In short, the parameters $E$, $D$, $\lambda$ and $\epsilon_0$ remain unchanged, while the parameter $\alpha$ is replaced by $C\alpha$ (where $C\lesssim 1$ is a large constant), $\theta$ is replaced by $\theta/s$, and $\rho$ is replaced by $\rho/s^2$. The key observation is that 
$$
(\rho/s^2)^{1/4}(\theta/s)^{-1}=s^{1/2}\rho^{1/4}\theta^{-1}.
$$ 
Since exponent of $s$ is positive, this re-scaling process gives us a stronger pointwise bound in \eqref{smallHairbrushCoarseEstimate}, which is what allows the ``induction step'' of the re-scaling argument to proceed.
\end{proof}

\begin{proof}[Proof of Proposition \ref{smallHairbrushPropRobustTrans}]
For each $\eps>0$, we need to establish the bound
\begin{equation}\label{smallHairbrushCoarseEstimateQuant}
\sum_{T_\theta\in\tubes_\theta}\chi_{Y(T_\theta)} \leq \big(C_{\epsilon,\epsilon_0}\alpha^{C^\prime_{\epsilon,\epsilon_0}} \theta^{-\eps}\big) \lambda^{-1}\theta^{-1} E^{1/2}D^{1/4}\rho^{1/4}(\theta^3|\tubes|)^{1/4}
\end{equation}
for some constants $C_{\epsilon,\epsilon_0}$ and $C^\prime_{\epsilon,\epsilon_0}$. 

Since the tubes in $\tubes_\theta$ are essentially distinct, we have $\sum_{T_\theta\in\tubes_\theta}\chi_{Y(T_\theta)}\leq |\tubes_\theta|\leq\theta^{-6}$. Thus by requiring that $C^\prime_{\epsilon,\epsilon_0}\geq24/\epsilon\epsilon_0$, we can assume that 
\begin{equation}\label{alphaSmall}
\alpha\leq \theta^{-\epsilon \epsilon_0/4}, 
\end{equation}
since otherwise Proposition \ref{smallHairbrushPropRobustTrans} follows immediately. 

Similarly, by requiring that $C_{\epsilon,\epsilon_0}$ be sufficiently large (independent of $\theta$), we can assume that 
\begin{equation}\label{logSmall}
|\log\theta|\leq\theta^{-\epsilon \epsilon_0/4}.
\end{equation}  

After a refinement of the shadings $Y(T)$, we can assume that $\sum_{T\in\tubes}\chi_{Y'(T)}\sim\mu_\theta\chi_B$, with $\mu_{\theta}|B|\geq |\log\theta|^{-1}\lambda(\theta^3|\tubes|)$. After this refinement, the tubes are still $s$--robustly transverse with error $1/100$, and they satisfy the two-ends condition with exponent $\epsilon_0$ and error $\alpha|\log\theta|$. It remains to show that 
$$
\mu_\theta \lessapprox_{\eps_0,\alpha} \lambda^{-1}\theta^{-1} E^{1/2}D^{1/4}\rho^{1/4}(\theta^3|\tubes|)^{1/4}.
$$

We begin as in Wolff's hairbrush argument \cite{W} by finding a tube with a large hairbrush.  The proof shows moreover that the hairbrush contains a large set that is not too close to the central tube.

\begin{lemma}\label{bigHairbrush}
There exists a tube $T_0\in \tubes_{\theta}$ and radius $r \geq\theta^{\eps/2}\gtrapprox_{\eps_0,\alpha} 1$ (here $\eps$ is the constant from \eqref{smallHairbrushCoarseEstimateQuant}) so that
\begin{equation}\label{hairBrushBd}
\left| \bigcup_{T\in\operatorname{Hair}(T_0)} Y'(T) \setminus N_r(T_0) \right| \gtrapprox_{\eps_0,\alpha} \lambda^2\theta^{2}\mu_\theta \max(E^{-1},D^{-1}). 
\end{equation} 
\end{lemma}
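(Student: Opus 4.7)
The plan is to adapt Wolff's hairbrush argument \cite{W}, using the robust transversality and geometric transversality bound to locate $T_0$, the two-ends condition to cut off $N_r(T_0)$, and the plane structure of the hairbrush together with direction-separation to control the overlap.

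\emph{Step 1: Locate $T_0$ via an $L^2$ argument.} Since $\sum_T \chi_{Y'(T)} \sim \mu_\theta \chi_B$ and $\mu_\theta |B| \gtrapprox \lambda \theta^3 |\tubes|$, expanding the square and integrating gives
$$\sum_{T_0, T} |Y'(T_0) \cap Y'(T)| \sim \mu_\theta^2 |B| \gtrapprox \mu_\theta \lambda \theta^3 |\tubes|.$$
Because $(\tubes, Y')$ is $s$-robustly transverse, at least half of this $L^2$-mass comes from pairs with $\angle(T_0, T) \geq s$. For any such pair the naive geometric transversality bound yields $|Y'(T_0) \cap Y'(T)| \leq |T_0 \cap T| \lesssim \theta^4 / s$. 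Pigeonholing in $T_0$ produces a tube $T_0 \in \tubes$ with $|\operatorname{Hair}_s(T_0)| \gtrsim s \mu_\theta \lambda / \theta$, where $\operatorname{Hair}_s(T_0) := \{T \in \operatorname{Hair}(T_0) : \angle(T, T_0) \geq s\}$.

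\emph{Step 2: Two-ends cutoff.} Fix $r = \theta^{\epsilon/2}$. For $T \in \operatorname{Hair}_s(T_0)$, the set $T \cap N_r(T_0)$ lies in a ball of radius $O(r/s)$ centered at $T \cap T_0$, so the two-ends condition on $Y'(T)$ gives
$$|Y'(T) \cap N_r(T_0)| \leq \alpha (Cr/s)^{\epsilon_0} |Y'(T)|.$$
By the reduction $\alpha \leq \theta^{-\epsilon \epsilon_0 / 4}$ from \eqref{alphaSmall}, this is at most $|Y'(T)|/2$ once $\theta$ is small compared to $s, \epsilon, \epsilon_0$. Thus $|Y'(T) \setminus N_r(T_0)| \geq |Y'(T)|/2 \gtrsim \lambda \theta^3$, and summing over the transverse hairbrush,
$$\sum_{T \in \operatorname{Hair}_s(T_0)} |Y'(T) \setminus N_r(T_0)| \gtrapprox_{\eps_0, \alpha} \mu_\theta \lambda^2 \theta^2.$$

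\emph{Step 3: Lower bound on the union via plane decomposition.} By hypothesis, every tube in $\operatorname{Hair}(T_0)$ lies in the $\theta$-neighborhood of a plane containing $T_0$'s central line $L$. Partition $\operatorname{Hair}_s(T_0) = \bigsqcup_v \operatorname{Hair}_v$ by the direction $v$ of each tube transverse to $L$, and let $\Pi_v = \operatorname{span}(L, v)$. Within each direction class, the tubes are $v$-parallel and essentially distinct, hence pairwise disjoint. Across direction classes, $\Pi_v \cap \Pi_{v'} = L$, so $N_\theta(\Pi_v) \cap N_\theta(\Pi_{v'}) \subset N_{C\theta/\sin\angle(v,v')}(L)$. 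After a preliminary grouping into $\theta^{1 - \epsilon/2}$-separated direction classes (a $\theta^{-\epsilon/2} \lessapprox 1$ loss), these intersections lie inside $N_r(T_0)$ and are discarded. Combining these two disjointness statements,
$$\Big|\bigcup_{T \in \operatorname{Hair}(T_0)} Y'(T) \setminus N_r(T_0)\Big| \gtrapprox \sum_v |\operatorname{Hair}_v| \lambda \theta^3 = |\operatorname{Hair}_s(T_0)| \lambda \theta^3 \gtrapprox \mu_\theta \lambda^2 \theta^2,$$
which implies the claimed bound since $\max(E^{-1}, D^{-1}) \leq 1$. (The constraints from the $D$ and $E$ hypotheses are not strictly needed for this lower bound, but they enter the companion overlap estimate that the proof of Proposition \ref{smallHairbrushPropRobustTrans} obtains from this lemma.)

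The main obstacle is the inter-plane disjointness step: one must simultaneously ensure that (i) the cutoff radius $r = \theta^{\epsilon/2}$ is large enough that $N_\theta(\Pi_v) \cap N_\theta(\Pi_{v'}) \subset N_r(T_0)$ for the grouped direction classes, and (ii) the two-ends condition still yields $|Y'(T) \setminus N_r(T_0)| \gtrsim |Y'(T)|$ at that scale. These two constraints determine the compatible choice $r = \theta^{\epsilon/2}$ together with $\theta^{1-\epsilon/2}$-angular grouping, with all losses absorbed into $\lessapprox 1$ factors via \eqref{alphaSmall}.
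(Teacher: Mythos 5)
Your Steps 1 and 2 are essentially the paper's argument (find $T_0$ with a large transverse hairbrush via an $L^2$/incidence count, then use the two-ends condition together with \eqref{alphaSmall} and \eqref{logSmall} to discard $N_r(T_0)$ at scale $r \sim \theta^{\epsilon/2}$). Step 3, however, has a genuine gap, and it is exactly at the place where the $D$ and $E$ hypotheses enter.

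The claim that tubes within a direction class are ``pairwise disjoint,'' so that the union is comparable to the sum $\sum_v |\operatorname{Hair}_v|\lambda\theta^3$, is not justified and is false in general. First, your direction classes must be taken $\theta^{1-\epsilon/2}$-coarse (otherwise the cross-class intersections $N_\theta(\Pi_v)\cap N_\theta(\Pi_{v'})$ do not shrink into $N_r(T_0)$), and a $\theta^{1-\epsilon/2}$-coarse class is a slab of thickness $\sim\theta^{1-\epsilon/2}$ containing tubes whose mutual angles range over $[\theta,\theta^{1-\epsilon/2}]$; there is no reason these are disjoint. Second, and more fundamentally, the hypotheses permit up to $Et\theta^{-1}$ tubes in a $1\times t\times\theta\times\theta$ prism, so a single $\theta$-slab through $L_0$ can contain $\sim E\theta^{-1}$ tubes from $\operatorname{Hair}(T_0)$; the union of these tubes is at most a bounded region of measure $\sim\theta^2$ while the sum of their shadings is $\sim E\lambda\theta^2$. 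So the stronger bound you assert, without the $\max(E^{-1},D^{-1})$ factor, actually fails as soon as $E$ is large, and the sentence ``The constraints from the $D$ and $E$ hypotheses are not strictly needed'' is incorrect. What the paper does instead is cover $\RR^4\setminus N_r(T_0)$ by $\lesssim \theta^{-\epsilon}$-overlapping $\theta$-slabs through $L_0$ and run Cordoba's two-dimensional $L^2$ argument inside each slab: the dyadic-angle decomposition, combined with the bound of $D$ tubes per direction and the bound $Et\theta^{-1}$ on tubes in a $t$-prism, shows the typical multiplicity in each slab is $\lessapprox \min(D,E)$, which is precisely where the factor $\max(E^{-1},D^{-1})$ comes from. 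Your proposal needs this overlap estimate; replacing it by geometric disjointness does not work.
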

\begin{proof}
The proof is essentially Wolff's hairbrush argument from  \cite{W}. Let $\lambda^\prime = \frac{|\log\theta|^{-1}}{100}\lambda$ and let
$$
\tubes^{\prime}=\{T\in\tubes\colon |Y'(T)|\geq \lambda^\prime |T|\}.
$$
Then $|\tubes^\prime|\geq\frac{|\log\theta|}{100}|\tubes|$. Since the tubes in $(\tubes_\theta,Y)$ are $s$--robustly transverse with error $1/100$,
$$
|\{(T, T^\prime)\colon Y'(T)\cap Y'(T^\prime)\neq\emptyset,\ \angle(T,T^\prime)\geq s,\ T^\prime\in\tubes^\prime\}|\gtrapprox |\tubes| \lambda \theta^{-1}\mu_\theta.
$$
Thus by pigeonholing, there exists $T_0\in\tubes$ with
$$
|\{T^\prime\in\tubes^\prime\colon Y'(T_0)\cap Y'(T^\prime)\neq\emptyset,\ \angle(T,T^\prime)>s\}|\gtrapprox\lambda \theta^{-1}\mu_{\theta}.
$$
Since each tube from $\tubes$ (with the shading $Y'$) satisfies the two-ends condition with exponent $\epsilon_0$ and error $\alpha|\log\theta|$, and since $|Y'(T)|\geq \frac{|\log\theta|^{-1}}{100}\lambda\geq \frac{|\log\theta|^{-1}}{100}|Y(T)|$, if we define
$$
r=(200\alpha|\log\theta|^2)^{-1/\epsilon_0}\ge \theta^{\epsilon/2},
$$ 
(here we used \eqref{alphaSmall} and \eqref{logSmall}) then for every ball $B(r)$ of radius $r$ we have

$$
|B(r)\cap Y^\prime(T)|\leq \alpha|\log\theta| r^{\epsilon_0}|Y(T)|\leq\frac{1}{2}|Y^\prime(T)|.
$$

Cover $\RR^4\backslash N_{r}(T_0)$ by the $\theta$--neighborhoods of planes containing the line concentric with $T_0$; these sets are $\lesssim \theta^{-2(\epsilon/2})=\theta^{-\epsilon}$ overlapping.  For each such slab, we study the collection of $Y'(T) \setminus N_r(T_0)$ lying in the slab, with $T \in \operatorname{Hair}(T_0)$.  In this collection, there are at most $D$ tubes pointing in each direction.  Therefore the typical multiplicity of these tubes is $\lessapprox D$, by the standard $L^2$ argument.  Also, each rectangular prism of dimensions $1\times t\times \theta\times\theta$ can contain at most $Et\theta^{-1}$ tubes.   Using this information, the $L^2$ argument also shows that the typical multiplicty of these tubes is $\lessapprox E$.
$$
\left| \bigcup_{T\in\operatorname{Hair}(T_0)}Y^\prime(T) \setminus N_r(T_0) \right| \gtrapprox_{\eps_0,\alpha} \lambda_1^2 \theta^3 (\mu_{\theta}\theta^{-1})\max(E^{-1},D^{-1})=\lambda^2\theta^{2}\mu_\theta \max(E^{-1},D^{-1}).\qedhere
$$
\end{proof}
Now we bring into play our special assumption that $\operatorname{Hair}(T_0)$ lies in a union of 2-planes with small volume.  Let $T_0$ be the tube given by the last lemma and let $L_0$ be the line coaxial with $T_0$. Let $\Pi_0$ be a plane that intersects $B(0,2)$ with $\dist\big(T_0\cap B(0,2),\ \Pi_0\cap B(0,2)\big)\sim 1$. Let $\psi\colon\RR^4 \setminus L_0 \to \Pi_0$ be the projection.  Given a point $x \in \RR^4 \setminus L_0$, $\psi(x)$ is the unique intersection point between $\Pi_0$ and the plane spanned by $x$ and $L_0$.  For each point $y \in \Pi_0$, the preimage $\psi^{-1}(y)$ is the 2-plane spanned by $y$ and $L_0$.  (And for each line $l \subset \Pi_0$, the preimage $\psi^{-1}(l)$ is the 3-plane spanned by $l$ and $L_0$.)  The map $\psi$ is not Lipschitz -- as $x$ approaches $L_0$, $| \nabla \psi(x)|$ goes to infinity.  However, if we restrict $\psi$ to a map from $B(0,2) \setminus N_r(T_0)$ to $\Pi_0$, then $\psi$ has Lipschitz constant $\approx_{\eps_0,\alpha} 1$.  By assumption, $\bigcup_{T \in \operatorname{Hair}(T_0)} Y'(T)$ lies in a union of 2-planes thru $L_0$ with total volume at most $\rho$.  Therefore,

$$  \Big| \psi \Big( \bigcup_{T \in \operatorname{Hair}(T_0)} Y'(T) \setminus N_r(T_0) \Big) \Big| \lessapprox_{\eps_0,\alpha} \rho. $$

We want to focus on the region $B(0,2) \setminus N_r(T_0)$, where the geometry of $\psi$ is well-behaved, so we again refine our shading a little, replacing each $Y'(T)$ by $Y'(T) \setminus N_r(T_0)$.  By the Wolff axioms, not many tubes can have a large intersection with $N_r(T_0)$, so this has a negligible effect.  

We denote 

$$
Z_{T_0}= \bigcup_{T\in\operatorname{Hair}(T_0)}Y'(T).
$$

We have just seen that

$$ | \psi(Z_{T_0}) | \lessapprox_{\eps_0,\alpha} \rho. $$ 

But by Lemma \ref{bigHairbrush}, we know that 

\begin{equation} \label{Zlower} |Z_{T_0}| \gtrapprox_{\eps_0,\alpha} \lambda^2\theta^{2}\mu_\theta D^{-1/2} E^{-1/2}.\end{equation}

Now we have

$$ \mu_{\theta} |Z_{T_0}| \sim \int_{Z_{T_0}} \sum_{T \in \tubes} \chi_{Y'(T)}. $$

If $Y'(T) \cap Z_{T_0}$ has volume $\theta^3 l$, then $\psi (Y'(T) \cap Z_{T_0}$ has area $\sim \theta l$.  Therefore,

$$ \int_{Z_{T_0}} \sum_{T \in \tubes} \chi_{Y'(T)} \lesssim \theta^2 \int_{\psi(Z_{T_0})} \sum_{T \in \tubes} \chi_{\psi(Y'(T))}. $$

By Cauchy-Schwarz,

$$ \theta^2 \int_{\psi(Z_{T_0})} \sum_{T \in \tubes} \chi_{\psi(Y'(T))} \lesssim \theta^2 \rho^{1/2} \Big\| \sum_{T \in \tubes} \chi_{\psi(Y'(T))} \Big\|_{L^2(\Pi_0)}. $$

We pause to estimate this $L^2$ norm.  Using Cordoba's two-dimensional Kakeya argument from \cite{Cor}, we have 

$$ \Big\| \sum_{T \in \tubes} \chi_{\psi(Y'(T))} \Big\|_2^2 = \sum_{T_1, T_2 \in \tubes} | \psi(Y'(T_1)) \cap \psi(Y'(T_2)) | = $$

$$ = \sum_{T_1 \in \tubes}\ \sum_{t \textrm{ dyadic}}\ \sum_{\substack{ \psi(Y'(T_2)) \textrm{ intersects }\\ \psi(Y'(T_1)) \textrm{ in angle } \sim t }} t^{-1} \theta^2. $$

If $\psi(Y'(T_2))$ intersects $\psi(Y'(T_1))$ in an angle $\sim t$, then $T_2$ is forced to lie in a $t \times 1 \times 1 \times 1$ slab.  This slab is the $t$-neighborhood of the 3-plane spanned by $L_0$ and the central line of $L_1$.  For each $T_1$, the number of such $T_2$ is bounded by $E t \theta^{-3}$.  Plugging in this bound, we see that 

$$ \Big\| \sum_{T \in \tubes} \chi_{\psi(Y'(T))} \Big\|_2 \lessapprox \theta^{-1/2} E^{1/2} | \tubes|^{1/2}. $$

Plugging this last estimate into our reasoning above, we see that 

$$ \mu_{\theta} |Z_{T_0}| \lessapprox_{\eps_0,\alpha} \theta^{3/2} \rho^{1/2} E^{1/2} | \tubes |^{1/2}. $$

Plugging in the lower bound for $|Z_{T_0}|$ in (\ref{Zlower}) and rearranging gives 

$$ \mu_\theta^2 \lessapprox_{\eps_0,\alpha} \lambda^{-2} \theta^{-1/2} \rho^{1/2} D^{1/2} E | \tubes |^{1/2}, $$

which is equivalent to the desired bound

$$ \mu_\theta \lessapprox_{\eps_0,\alpha} \lambda^{-1} \theta^{-1} \rho^{1/4} D^{1/4} E^{1/2} ( \theta^3 |\tubes|)^{1/4}. $$

\end{proof}

\subsection{Analyzing the set at two scales}\label{scaleThetaAnalysis}
We return to the proof of Proposition \ref{badKPlainyBound}. Let $(\tubes,Y)$ be the set of tubes from the statement of the proposition.  After a refinement, we can assume that $\sum_{T \in \tubes} \chi_{Y(T)} \sim \mu \chi_W$.  We want to prove a lower bound on $|W|$.  

We will begin by replacing each tube $T\in\tubes$ with its $\theta$--neighborhood. Define $\tubes_\theta=\{N_{\theta}(T)\colon T\in\tubes\}$, and for each $T_{\theta}\in \tubes_\theta,$ let $\tubes(T_\theta)=\{T\in\tubes\colon T\subset T_{\theta}\}$. Then there exists a subset $\tubes_{\theta}\subset \hat\tubes$ so that the sets $\{\tubes(T_\theta)\colon T_\theta\in\tubes_\theta\}$ are disjoint; for any two tubes $T_\theta,\ T^\prime_\theta\in\tubes_{\theta}$, the 20--fold dilate of $T_\theta$ does not contain $T^\prime_\theta$; and  
$$
\sum_{T_\theta\in\tubes_{\theta}} |\tubes(T_{\theta})| \geq  c |\tubes|,
$$
where $c>0$ is an absolute constant.
\begin{remark}\label{reasonFor20}
The reason we insist that the 20-fold dilates of the tubes in $\tubes_\theta$ be distinct is that later we will replace each tube in $\tubes_\theta$ with its 10-fold dilate, and we want these dilated tubes to still be essentially distinct.
\end{remark}
Let $\tubes^\prime = \bigcup_{T_\theta\in\tubes_\theta}\tubes(T_\theta).$ Observe that $\tubes^\prime$ still satisfies \eqref{nonConcentrationOnPrisms} and \eqref{quantitativeTrilinearity}. We will call elements of $\tubes_{\theta}$ ``fat tubes,'' and elements of $\tubes^\prime$ ``thin tubes.''

After pigeonholing (which induces a refinement of $\tubes_{\theta}$ and $\tubes^\prime$), we can assume that there is a number $A$ with $1\leq A\leq \theta^{-3}$ so that there are $A\theta^{-3}$ fat tubes, and each fat tube contains $\approx|\tubes|/(A\theta^{-3})$ thin tubes from $\tubes^\prime$. 
\begin{lemma}
$\tubes_{\theta}$ also obeys a version of the linear Wolff axioms: for each rectangular prism $R$ of dimensions $1\times t_1\times t_2\times t_3$, with $t_1,t_2,t_3\geq\theta$, the number of tubes from $\tubes_{\theta}$ contained in $R$ is $\lessapprox A t_1t_2t_3\theta^{-3}(\delta^3|\tubes|)^{-1}$, and this property continues to hold if we refine the set of tubes.
\end{lemma}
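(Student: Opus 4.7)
The plan is to derive the bound by pushing the linear Wolff axioms from the thin tubes in $\tubes^\prime$ up to the fat tubes in $\tubes_\theta$, using the fact (guaranteed by the pigeonholing step just above the lemma) that every fat tube $T_\theta \in \tubes_\theta$ contains approximately the same number of thin tubes.

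First I would fix a rectangular prism $R$ of dimensions $1 \times t_1 \times t_2 \times t_3$ with $t_i \geq \theta$, and let $\tubes_\theta(R) = \{T_\theta \in \tubes_\theta : T_\theta \subset R\}$. The key observation is that if $T_\theta \in \tubes_\theta(R)$, then every thin tube $T \in \tubes(T_\theta)$ satisfies $T \subset T_\theta \subset R$, so the disjointness of the families $\{\tubes(T_\theta)\}_{T_\theta \in \tubes_\theta}$ gives
\begin{equation*}
\sum_{T_\theta \in \tubes_\theta(R)} |\tubes(T_\theta)| \leq |\{T \in \tubes^\prime : T \subset R\}| \leq |\{T \in \tubes : T \subset R\}|.
\end{equation*}
Since we are in the $K = 1$ setting of Proposition \ref{badKPlainyBound}, the linear Wolff axioms \eqref{nonConcentrationOnPrisms} bound the right-hand side by $t_1 t_2 t_3 \delta^{-3}$.

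Next I would invoke the pigeonholing step which asserts that $|\tubes(T_\theta)| \gtrsim |\tubes|/(A \theta^{-3})$ for each $T_\theta \in \tubes_\theta$ (up to a $|\log \delta|^{-C}$ loss implicit in $\lessapprox$). Combining this lower bound on each summand with the upper bound on the sum yields
\begin{equation*}
|\tubes_\theta(R)| \cdot \frac{|\tubes|}{A \theta^{-3}} \lessapprox t_1 t_2 t_3 \delta^{-3},
\end{equation*}
which rearranges to the desired inequality
\begin{equation*}
|\tubes_\theta(R)| \lessapprox A\, t_1 t_2 t_3\, \theta^{-3} \, (\delta^3 |\tubes|)^{-1}.
\end{equation*}

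For the stability under refinement, note that the right-hand side depends only on the original quantities $A$, $|\tubes|$, $\theta$, $\delta$, while any refinement of $\tubes_\theta$ (or of $\tubes^\prime$, which only shrinks $\tubes_\theta$ further) can only decrease the count $|\tubes_\theta(R)|$; the inequality therefore persists automatically with the same constants. I do not anticipate any real obstacle here: the argument is a one-line pigeonhole coupled with the hypothesis $K = 1$, and the only minor point to check is that the thin tubes inside a fat tube $T_\theta \subset R$ are genuinely contained in $R$ rather than in a slight dilate — which is immediate because thin tubes are subsets of their parent fat tube by the definition of $\tubes(T_\theta)$.
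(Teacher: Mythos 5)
Your argument is correct and is essentially the same as the paper's: both count thin tubes inside $R$, use the disjointness of the families $\{\tubes(T_\theta)\}$ and the lower bound $|\tubes(T_\theta)| \gtrapprox |\tubes|/(A\theta^{-3})$ from the pigeonholing step to bound the number of thin tubes below by $|\tubes_\theta(R)| \cdot |\tubes|/(A\theta^{-3})$, and then apply the linear Wolff axioms (with $K=1$ in the setting of Proposition \ref{badKPlainyBound}) to bound the number of thin tubes in $R$ above by $t_1 t_2 t_3 \delta^{-3}$. Your observation about stability under refinement — the right-hand side is fixed once and for all while the left-hand side can only shrink — is the correct reason the conclusion persists.
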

\begin{proof}
Suppose that $R$ contains $L$ fat tubes.  Each fat tube contains $\sim |\tubes| /(A \theta^{-3})$ thin tubes, and so $R$ contains $\gtrsim L |\tubes|/ ( A \theta^{-3})$ thin tubes. Since $\tubes$ satisfies the linear Wolff axioms, by \eqref{nonConcentrationOnPrisms} we have
\begin{equation*}
L|\tubes|/(A\theta^{-3})\lessapprox t_1t_2t_3\delta^{-3},
\end{equation*}
and thus
\begin{equation*}
 L\lessapprox A t_1t_2t_3\theta^{-3}(\delta^3|\tubes|)^{-1}.\qedhere
\end{equation*}
\end{proof}
\subsection{Fine scale estimates}
We will now examine a single fat tube. A key tool will be Wolff's Kakeya bound from  \cite{W}, which was mentioned in the introduction. 


\begin{theorem}[\cite{W}]\label{WolffThm}
Let $\tubes$ be a set of $\rho$--tubes in $\RR^4$ that satisfy the linear Wolff axioms (i.e. estimate \eqref{nonConcentrationOnPrisms} with $K=1$), and for each $T\in\tubes$, let $Y(T)\subset T$. Suppose $|Y(T)|\geq \lambda|T|$ for each $T\in\tubes$ and that each tube satisfies the two-ends condition with exponent $\epsilon_0$ and error $\alpha$. Then after a refinement of $(\tubes,Y)$, we have the pointwise bound
\begin{equation}\label{WolffBoundOnTubes}
\sum_{T\in\tubes}\chi_{Y(T)} \lessapprox_{\eps_0,\alpha} \lambda^{-1/2} \rho^{-1} ( \rho^{3}|\tubes|)^{1/3}.
\end{equation}
\end{theorem}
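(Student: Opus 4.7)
This is Wolff's 1995 hairbrush bound from \cite{W}, restated in the shading / two-ends framework used throughout the paper, so my plan is to reproduce Wolff's original proof while tracking the shading parameter $\lambda$ and the two-ends parameters $(\eps_0,\alpha)$ exactly as in the proof of Proposition \ref{smallHairbrushPropRobustTrans}. The first step is a standard dyadic pigeonhole refinement of $(\tubes,Y)$ so that $\sum_{T\in\tubes}\chi_{Y(T)}\sim\mu\chi_{K}$ on some set $K\subset B(0,2)$ with constant multiplicity $\mu$; the desired pointwise inequality is then equivalent to the scalar bound $\mu\lessapprox_{\eps_0,\alpha}\lambda^{-1/2}\rho^{-1}(\rho^3|\tubes|)^{1/2}$.

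Exactly as in the derivation of Proposition \ref{messyMainProp} from Proposition \ref{mainPropQuantTrans}, I would then reduce to the case where $(\tubes,Y)$ is $s$-robustly transverse with error $1/100$, for some small constant $s=s(\eps_0)>0$. Either this already holds at a typical point of $K$, in which case the argument below applies, or the tubes partition into sub-families contained in $s$-cylinders; in the latter case I rescale each cylinder to unit size to obtain an instance of the statement at scale $\rho/s$, and the exponents are arranged so that the extra factor of $s^{1/2}$ produced by the rescaling closes the induction on $\rho$.

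In the robustly transverse case I would carry out Wolff's hairbrush argument. Pigeonholing the incidence count $\sum_{T\in\tubes}|Y(T)\cap Y(T_0)|\sim\mu\lambda\rho^3$ over $T_0\in\tubes$, together with the bound $|T\cap T_0|\lesssim\rho^4$ for robustly transverse $\rho$-tubes, produces a tube $T_0$ with $|\operatorname{Hair}(T_0)|\gtrapprox_{\eps_0,\alpha}\lambda\mu\rho^{-1}$. The two-ends condition for $Y(T_0)$ then supplies a scale $r\gtrapprox_{\eps_0,\alpha}1$ and, after a further refinement of the shadings, $|Y(T)\setminus N_r(T_0)|\gtrsim|Y(T)|$ for every $T\in\operatorname{Hair}(T_0)$. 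The geometric core is an angular $L^2$ estimate on $Z_{T_0}:=\bigcup_{T\in\operatorname{Hair}(T_0)}\bigl(Y(T)\setminus N_r(T_0)\bigr)$: partition $\operatorname{Hair}(T_0)$ by the dyadic angle $\theta\in[\rho,1]$ that each tube makes with $T_0$, observe that a tube in $\operatorname{Hair}_\theta(T_0)$ is contained (away from $N_r(T_0)$) in a $1\times\theta\times\rho\times\rho$ slab around $T_0$, and apply the linear Wolff axioms together with the Cordoba-type two-dimensional $L^2$ bound \cite{Cor} exactly as in the last step of the proof of Proposition \ref{smallHairbrushPropRobustTrans}. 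Combining the resulting estimate
$$\Bigl\|\sum_{T\in\operatorname{Hair}(T_0)}\chi_{Y(T)\setminus N_r(T_0)}\Bigr\|_{L^2}^2\lessapprox_{\eps_0,\alpha}\rho^3|\operatorname{Hair}(T_0)|$$
with the incidence lower bound via Cauchy--Schwarz yields a lower bound on $|Z_{T_0}|$, and comparing this with $|Z_{T_0}|\leq|K|\sim\lambda\rho^3|\tubes|/\mu$ rearranges to the desired bound on $\mu$.

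The main obstacle is implementing the dyadic angular $L^2$ estimate sharply enough to deliver the two-ends improvement, namely the $\lambda^{-1/2}$ exponent rather than the naive $\lambda^{-1}$. This is what makes the two-ends truncation away from $N_r(T_0)$ essential: on that complement the portion of each hairbrush tube has measure $\gtrsim\lambda|T|$, which is precisely what the Cauchy--Schwarz step needs to produce the right power of $\lambda$. Everything else --- the pigeonhole to constant multiplicity, the dichotomy reducing to robust transversality, the hairbrush pigeonhole, and the angular slab decomposition --- is a direct transcription of Wolff's 1995 argument into the shading / two-ends notation already used elsewhere in the paper.
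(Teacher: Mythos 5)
The paper does not prove Theorem \ref{WolffThm}; it is cited directly to Wolff's 1995 paper \cite{W} as a known result, so there is no in-paper proof to compare against. Your sketch reconstructs Wolff's hairbrush argument in the shading/two-ends framework of the present paper, and its overall architecture (pigeonhole to constant multiplicity, the robust-transversality dichotomy with a rescaling induction, selecting a tube $T_0$ with a large hairbrush, truncating away from $N_r(T_0)$, projecting to a $2$-plane transverse to $L_0$, and applying Cordoba's $L^2$ argument) is the right one and agrees with how the same ingredients are deployed in the paper's proof of Proposition \ref{smallHairbrushPropRobustTrans}.

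However, there is a genuine quantitative gap in the step you flag as the main obstacle. With the $L^2$ bound you write, $\bigl\|\sum_{T\in\operatorname{Hair}(T_0)}\chi_{Y(T)\setminus N_r(T_0)}\bigr\|_2^2\lessapprox\rho^3|\operatorname{Hair}(T_0)|$, Cauchy--Schwarz against the mass lower bound $\sum_T|Y(T)\setminus N_r(T_0)|\gtrsim\lambda\rho^3|\operatorname{Hair}(T_0)|$ gives only $|Z_{T_0}|\gtrapprox\lambda^2\rho^3|\operatorname{Hair}(T_0)|\gtrapprox\lambda^3\mu\rho^2$, and combining this with $\mu|K|\sim\lambda\rho^3|\tubes|$ produces $\mu\lessapprox\lambda^{-1}\rho^{-1}(\rho^3|\tubes|)^{1/2}$, i.e.\ the exponent $\lambda^{-1}$, not $\lambda^{-1/2}$. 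The mass lower bound carrying a factor of $\lambda$ is not enough; one must also improve the $L^2$ estimate itself to $\bigl\|\sum_T\chi_{Y(T)\setminus N_r(T_0)}\bigr\|_2^2\lessapprox_{\eps_0,\alpha}\lambda\rho^3|\operatorname{Hair}(T_0)|$ (equivalently, the truncated hairbrush tubes have pointwise multiplicity $\lessapprox 1$, not $\lessapprox\lambda^{-1}$, on $Z_{T_0}$). That is the content of the two-ends refinement in Wolff's argument: once one is at distance $\gtrsim r\approx 1$ from $T_0$, the tubes of the hairbrush lying near a fixed $2$-plane through $L_0$ are nearly disjoint, so the off-diagonal terms $\sum_{T_1\ne T_2}|Y(T_1)\cap Y(T_2)\setminus N_r(T_0)|$ do not dominate the diagonal. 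Your sketch does not supply this, and as written it only recovers the weaker $\lambda^{-1}$ bound. Note, for contrast, that Proposition \ref{smallHairbrushPropRobustTrans} in the paper deliberately settles for the $\lambda^{-1}$ dependence because in that setting one has the structural inputs $D,E$ instead; Theorem \ref{WolffThm} as stated genuinely needs the sharper $\lambda^{-1/2}$, so the missing multiplicity estimate is not optional.
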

For each fat tube $T_{\theta}\in\tubes_{\theta}$, apply Theorem \ref{WolffThm} and the same re-scaling argument from Section \ref{rescalingArgumentsSec} to the tubes in $\tubes(T_\theta)$.  After refining $Y(T)$, we get

$$ \sum_{T \in \tubes(T_\theta)} \chi_{Y(T)} \sim \mu_{\operatorname{fine}} \chi_{B_{T_\theta}}. $$

\noindent where the multiplicity $\mu_{\operatorname{fine}}$ obeys the bound 

\begin{equation*}
\mu_{\ofine} \lessapprox_{\eps_0,\alpha} \lambda^{-1/2} (\delta/\theta)^{-1} \big( (\delta/\theta)^{3}|\tubes_\theta|\big)^{1/3}. \end{equation*}

Plugging in that $|\tubes_\theta| \sim |\tubes|/(A \theta^{-3})$ and simplifying, we get

\begin{equation}\label{pointwiseMuBdInsideTube}
\mu_{\ofine} \lessapprox_{\eps_0,\alpha} \lambda^{-1/2} \theta^{1 + \eps} \delta^{-1} A^{-1/3} ( \delta^3 |\tubes| )^{1/3}. 
\end{equation}

\begin{remark}
Instead of using Theorem \ref{WolffThm}, it would be temping to instead apply Theorem \ref{mainThm} at scale $\rho$ to obtain a seemingly stronger variant of \eqref{WolffBoundOnTubes}. The problem with this approach is that Theorem \ref{mainThm} has worse dependence on the size of $|\tubes|$. While the RHS of \eqref{WolffBoundOnTubes} contains the term $( \rho^{3}|\tubes|)^{1/3}$, if we used Theorem \ref{mainThm} then the corresponding term would be $\rho^{3}|\tubes|,$ and this would lead to inferior bounds.

\end{remark}

\subsection{Coarse scale estimates}
\subsubsection{Defining a shading on the fat tubes}
We will now define a shading $Y(T_\theta)$ on the fat tubes in $\tubes_\theta$. Let $\mathcal{Q}=\big\{[0,\theta)^4 + \theta v\colon v\in\ZZ^4 \big\}$; this is a set of disjoint $\theta$--cubes whose union is $\RR^4$. For each $T_\theta\in\tubes_\theta$, we can find a (finite) set $\mathcal{Q}_{T_\theta}\subset\mathcal{Q}$ and a number $w_{T_\theta}$ so that for each $Q\in\mathcal{Q}_{T_\theta},$
$$
\sum_{T\in\tubes(T_\theta)}|Q\cap Y^\prime(T)|\sim w_{T_\theta},
$$
and 
$$
\sum_{Q\in\mathcal{Q}_{T_\theta}}\sum_{T\in\tubes(T_\theta)}|Q\cap Y^\prime(T)|\gtrapprox \sum_{T\in\tubes(T_\theta)}|Y^\prime(T)|.
$$

For each $T_\theta\in\tubes_\theta$ and each $T\in\tubes(T_\theta)$, define 
$$
Y^{\prime\prime}(T)=Y^\prime(T)\cap\bigcup_{Q\in\mathcal{Q}_{T_\theta} }Q,
$$ 
and for each $T_\theta\in\tubes_\theta$, define
$$
\tubes^\prime(T_\theta)=\{T\in\tubes(T_\theta)\colon |Y^{\prime\prime}(T)|\geq |\log\delta|^{-C_3}\lambda|T|\}.
$$
If the constant $C_3$ is chosen sufficiently large ($C_3=100$ will certainly suffice), then 
\begin{equation}\label{mostMassKept}
\sum_{T\in\tubes^\prime(T_\theta)}|Y^{\prime\prime}(T_\theta)|\gtrapprox\sum_{T\in\tubes(T_\theta)}|Y^{\prime\prime}(T_\theta)|.
\end{equation}
In particular, $\tubes^\prime(T_\theta)$ is non-empty. Note that for each $T\in\tubes_\theta$, each tube $T\in\tubes^\prime(T_\theta)$ intersects at least $|\log\delta|^{-C_3}\lambda\theta^{-1}$ cubes from $\mathcal{Q}_{T_\theta}$. 

We will now abuse notation slightly and replace each tube $T_\theta\in\tubes_\theta$ with its 10-fold dilate. In particular, if a $\theta$--cube intersects a fat tube from $\tubes_\theta$ (before the dilation is applied), then the cube is contained in the dilated version of the fat tube. We will further abuse notation and refer to these (dilated) fat tubes as ``$\theta$ tubes'' or ``fat tubes.'' As noted in Remark \ref{reasonFor20}, the dilated fat tubes are still essentially distinct. 

For each $T_\theta\in\tubes_\theta$, define $Y(T_\theta)=\bigcup_{Q\in\mathcal{Q}_{T_\theta}}Q$. Because of our dilation, this set is contained in $T_\theta$. Since $\tubes^\prime(T_\theta)$ is non-empty and since at least $|\log\delta|^{-C_3}\lambda\theta^{-1}$ cubes intersect $Y(T)$ for each $T\in\tubes^\prime(T_\theta)$, we have 
\begin{equation}\label{lambdaBdForYTTheta}
|Y(T_\theta)|\gtrapprox \theta^4 (\lambda\theta^{-1})\gtrapprox \lambda|T_\theta|. 
\end{equation}

Next, we will show that the shading $Y(T_\theta)$ satisfies the two-ends condition with exponent $\epsilon_0$ and error $\lessapprox\alpha$. The key observation is that if $T\in\tubes^\prime(\theta)$, then the shading $Y^{\prime\prime}(T)$ satisfies the two-ends condition with exponent $\epsilon_0$ and error $\lessapprox\alpha$; this is because $Y(T)$ satisfies the two-ends condition with exponent $\epsilon_0$ and error $\alpha$, and $|Y^{\prime\prime}(T)|\gtrapprox|Y(T)|$. We will use this observation on line four of the computation below. The bound \eqref{mostMassKept} will be used on line five of the computation below. Let $T_\theta\in\tubes_\theta$ and let $B(r)$ be a ball of radius $r\geq\theta$. We have
\begin{equation*}
\begin{split}
|Y(T_\theta)\cap B(r)|&\sim \theta^4 |\{Q\in\mathcal{Q}_{T_\theta}\colon Q\cap B(r)\neq\emptyset\}|\\
&\sim \theta^4 w_{T_\theta}^{-1} \sum_{T\in\tubes(T_\theta)}|Y^{\prime\prime}(T)\cap B(r)|\\
&\leq \theta^4 w_{T_\theta}^{-1} \sum_{T\in\tubes^\prime(T_\theta)}|Y^{\prime\prime}(T)\cap B(r)|\\
&\lessapprox \alpha r^{\eps_0} \theta^4 w_{T_\theta}^{-1} \sum_{T\in\tubes^\prime(T_\theta)}|Y^{\prime\prime}(T)|\\
&\lessapprox \alpha r^{\eps_0} \theta^4 w_{T_\theta}^{-1} \sum_{T\in\tubes(T_\theta)}|Y^{\prime\prime}(T)|\\
&\sim \alpha r^{\eps_0}\theta^4 |\mathcal{Q}_{T_\theta}|\\
&\sim \alpha r^{\eps_0}|Y(T_\theta)|.
\end{split}
\end{equation*}

\subsubsection{Analyzing coarse scale behavior}
After a refinement of each shading $Y(T_\theta)$ (which induces a refinement of the shading $Y^{\prime\prime}$ of the tubes in $\tubes(T_\theta)$), we can assume that there is a set $B_{\operatorname{coarse}}$ and a number $\mu_{\mathrm{coarse}}$ so that $\sum\chi_{Y(T_\theta)}\sim\mu_{\mathrm{coarse}}\chi_{B_\mathrm{coarse}}$ pointwise. After this refinement, each tube still obeys the two-ends condition with exponent $\epsilon_0$ and error $\lessapprox\alpha$. Observe that by \eqref{pointwiseMuBdInsideTube}, we have the pointwise bound
\begin{equation}\label{fineCoarseMu}
\mu \lessapprox \mu_{\mathrm{coarse}}\ \mu_{\mathrm{fine}}.
\end{equation} 

\begin{remark}
The LHS of \eqref{fineCoarseMu} might be much smaller than the RHS. Inequality \eqref{fineCoarseMu} would be sharp if at every point at which the shadings of two $\theta$--tubes intersect, the shadings of their associated $\delta$--tubes also intersect.
\end{remark}

Note that the set $B_\mathrm{coarse}$ is a union of $\theta$--cubes from $\mathcal{Q}$. For each such cube $Q$ and each $x\in Q$, the thin tubes with $x\in Y^{\prime\prime}(T)$ lie in the $\theta$--neighborhood of a plane $\Pi_x$. Thus the fat tubes whose shadings contain $x$ are contained in a union of $\theta$--neighborhoods of planes; Since all of the tubes are contained in $B(0,2)$, we can intersect these $\theta$--neighborhoods of planes with $B(0,2)$. Each such set is contained in a rectangular prism of dimensions $4\times 4\times\theta\times\theta$. We will call sets of this form ``fat planes.''

After pigeonholing, we can refine the set $B_\mathrm{coarse}$ (which induces a refinement of the shadings $Y^{\prime\prime}(T)$ and $Y(T_\theta)$) so that there is a number $B$ with the property that for each $\theta$ cube $Q$ contained in $B_\mathrm{coarse}$, we can cover the fat tubes $T_\theta$ satisfying $Q\subset Y(T_\theta)$ with $\lessapprox\max(1,\theta\mu_{\mathrm{coarse}}B)$ essentially distinct fat planes, each of which contains $\lessapprox\theta^{-1}B^{-1}$ fat tubes.  Thus we can cover the thin tubes $\{T\in\bigcup_{T_\theta\in\tubes_\theta}\tubes(T_\theta)\colon Y^{\prime\prime}(T)\cap Q\neq\emptyset\}$ with $\lessapprox\max(1,\theta\mu_{\mathrm{coarse}}B)$ essentially distinct fat planes, each of which contains $\lessapprox\theta^{-1}B^{-1}$ fat tubes. Note that if we refine the set of fat tubes or the shadings of the tubes, the above observations remain true.

We note that the $Y(T)$ passing through a given $\delta$-cube all lie in the $\theta$-neighborhood of a single 2-plane - that is, in a single fat plane.  That fat plane contains $\lessapprox \theta^{-1} B^{-1}$ fat tubes.  Therefore, 

\begin{equation} \label{muBbound}
\mu \lessapprox \theta^{-1} B^{-1} \mu_{\ofine}
\end{equation}

Plugging in our bound for $\mu_{\ofine}$ in (\ref{pointwiseMuBdInsideTube}), we get a first estimate for $\mu$:

\begin{equation} \label{muest1}
\mu \lessapprox  \lambda^{- 1/2} \delta^{-1} A^{-1/3} B^{-1} (\delta^3 | \tubes| )^{1/3}.
\end{equation}

We will get a complementary estimate by studying $\mu_{\ocoarse}$.  Let $T_{\theta}$ be a fat tube. Then the fat tubes passing through each $\theta$--cube in $Y(T_{\theta})$ are contained in a union of fat planes. The same fat plane might be associated to several different $\theta$ cubes contained in $Y(T_{\theta})$. Refining the shadings $Y(T_\theta)$ and the set of fat tubes $\tubes_\theta$, there is a number $D$ so that for each fat plane containing $T_{\theta}$, there are $\sim D$ cubes contained in $Y(T_\theta)$ that are associated to that fat plane. In particular, this means that the tubes in the set
$$
\operatorname{Hair}(T_\theta)=\{T_\theta^\prime\in\tubes_{\theta}\colon Y(T_{\theta}^\prime)\cap Y(T_\theta)\neq\emptyset\}
$$ 
are contained in a union of fat planes of volume $\lessapprox \theta^2 \mu_{\ocoarse} D^{-1}B$, and this property is preserved under refinements. Furthermore, for each plane $\Pi$ with $T_\theta\subset N_{\theta}(\Pi)$, at most $\lesssim D$ tubes in $\tubes_{T_\theta,\Pi}=\{T_\theta^\prime\in \operatorname{Hair}(T_\theta)\colon T_\theta^\prime\subset N_{\theta}(\Pi)\}$ can point in the same direction. This is because the tubes in $\tubes_{T_\theta,\Pi}$ intersect $T_{\theta}$ in $\lesssim D$ distinct cubes, and the tubes in $\tubes_{T_\theta,\Pi}$ are essentially distinct. This means that for each $\theta$--separated direction $v$, at most 1 tube from $\tubes_{T_\theta,\Pi}$ can point in direction $v$ for each of the $\lesssim D$ distinct $\theta$--cubes where the intersections between the tubes in $\tubes_{T_\theta,\Pi}$ and $T_\theta$ occur. 

 Applying Proposition \ref{smallHairbrushProp} with 
\begin{equation*}
\rho\lesssim \theta^2 \mu_{\mathrm{coarse}} D^{-1}B,\qquad E\lessapprox A(\delta^3|\tubes|)^{-1},\qquad |\tubes_\theta|=A\theta^{-3},
\end{equation*}
we conclude that
\begin{equation*}
\begin{split}
\mu_{\mathrm{coarse}} &\lessapprox_{\eps_0,\alpha} \lambda^{-1} \theta^{-1}\qquad \ E^{1/2}\qquad\quad \phantom{.}D^{1/4}\qquad\ \ \rho^{1/4}\qquad\qquad\ \ \ \big(\theta^3|\tubes_\theta|\big)^{1/4}\\
 &\lessapprox_{\eps_0,\alpha}  \lambda^{-1} \theta^{-1}\ \big(A(\delta^3|\tubes|)^{-1}\big)^{1/2}\ D^{1/4}\ \big(\theta^2\mu_{\mathrm{coarse}}D^{-1}B\big)^{1/4}\ \big(\theta^3(A\theta^{-3})\big)^{1/4},
\end{split}
 \end{equation*}
i.e.
 \begin{equation*}
\mu_{\mathrm{coarse}}^{3/4}\lessapprox_{\eps_0,\alpha} \lambda^{-1} \theta^{-1/2}A^{3/4}B^{1/4}(\delta^3|\tubes|)^{-1/2},
 \end{equation*}
so
\begin{equation}\label{estimate2}
 \mu_{\mathrm{coarse}} \lessapprox_{\eps_0,\alpha} \lambda^{-4/3} \theta^{-2/3}AB^{1/3}(\delta^3|\tubes|)^{-2/3}.
\end{equation}

Combining the bound for $\mu_{\ofine}$ in (\ref{pointwiseMuBdInsideTube}) and this bound for $\mu_{\ocoarse}$, we get a second bound for $\mu \lessapprox \mu_{\ofine} \mu_{\ocoarse}$:

\begin{equation} \label{muest2}
\mu \lessapprox_{\eps_0,\alpha} 
\lambda^{- 11/6} \theta^{1/3} \delta^{-1} A^{2/3} B^{1/3} (\delta^3 | \tubes| )^{-1/3}.
\end{equation}

Combining (\ref{muest1}) and (\ref{muest2}), we get the following bound for $\mu$:

\begin{equation*}
\begin{split}
\mu & \lessapprox_{\eps_0,\alpha} \big(\lambda^{- 1/2} \delta^{-1} A^{-1/3} B^{-1} (\delta^3 | \tubes| )^{1/3}\big)^{2/3}\big(\lambda^{- 11/6} \theta^{1/3} \delta^{-1} A^{2/3} B^{1/3} (\delta^3 | \tubes| )^{-1/3}\big)^{1/3}\\
& = \lambda^{-17/18} \theta^{1/9} \delta^{-1} B^{-5/9}(\delta^3 | \tubes| )^{1/9}.
\end{split}
\end{equation*}

Since know $B \ge 1$, we get the simpler bound

$$ \mu \lessapprox \lambda^{-17/18} \theta^{1/9} \delta^{-1}(\delta^3 | \tubes| )^{1/9}. $$

Recall that $\sum_{T \in \tubes} \chi_{Y'(T)} \sim \mu \chi_W$.  

$$|W| \mu \approx \lambda \delta^3 | \tubes |, $$

and so we get a lower bound for $|W|$,

$$ |W| \gtrapprox_{\eps_0,\alpha} \lambda^{35/18} \theta^{-1/9} \delta ( \delta^3 | \tubes |)^{8/9}. $$

 This concludes the proof of Proposition \ref{badKPlainyBound} and hence of Theorem \ref{plainyTubesBound}.
\section{Proof of Theorem \ref{mainThm}}\label{proofOfMainThmSection}
Let $(\tubes,Y)$ satisfy the hypotheses of Proposition \ref{mainPropQuantTrans}, and let
\begin{equation*}
\theta_0=(\delta/\lambda)^{9/40}.
\end{equation*}
Observe that $\theta_0<1$, since we can assume $|\tubes|\geq 1$. Let $X_1$ be the set of points $x\in \RR^4$ for which there exists plane $\Pi$ passing through $x$ with the property that
$$
|\{T\in\tubes\colon x\in Y(T), \angle(T,\Pi)<\theta_0\}|\geq \frac{1}{100} |\{T\in\tubes\colon x\in T\}|,
$$
and let $X_2=\RR^4\backslash X_1$. Since $\sum_{T\in\tubes}|Y(T)|\geq (\lambda/2) (\delta^3|\tubes|)$, at least one of the following must hold:

\begin{align}
\sum_{T\in\tubes}|Y(T)\cap X_1|&\geq (\lambda/4)(\delta^3|\tubes|)\qquad\textrm{(the tubes are $\theta$--plainy)},\label{plainynessHolds}\\
\sum_{T\in\tubes}|Y(T)\cap X_2|&\geq (\lambda/4)(\delta^3|\tubes|)\qquad\textrm{(the tubes are $\theta$--trilinear)}.\label{plainynessFails}
\end{align}
Suppose \eqref{plainynessHolds} holds. Refine the shadings $Y(T)$ so that for each $x\in X_1$, the tubes $\{T\in\tubes\colon x\in Y(T)\}$ are contained in the $\theta_0$--neighborhood of a plane. After this refinement we have $\sum_{T\in\tubes}|Y(T)|\geq \frac{\lambda}{400}(\delta^3|\tubes|)$. Thus we can find a set $\tubes^\prime\subset\tubes$ with $|\tubes^\prime|\geq \frac{1}{400}|\tubes|$, and after further refining $Y(T)$ by a factor of at most $400$, we have $\frac{\lambda}{400}\leq|Y(T)|/|T|\leq \frac{\lambda}{200}$ for each $T\in\tubes^\prime$. In particular, this implies that the tubes in $\tubes^\prime$ satisfy the two-ends condition with exponent $\epsilon_0$ and error $400\alpha$. Applying Theorem \ref{plainyTubesBound} to $\tubes^\prime$, we conclude that
\begin{equation}\label{planynessBd}
\begin{split}
\Big|\bigcup_{T\in\tubes}Y(T)\Big|&\gtrapprox c_{\epsilon,\epsilon_0} \alpha^{-C^\prime_{\epsilon,\epsilon_0}} \lambda^{3} K^{-1} \theta_0^{-1/9}\delta^{1+\epsilon}(\delta^3|\tubes|)\\
&=c_{\epsilon,\epsilon_0} \alpha^{-C^\prime_{\epsilon,\epsilon_0}} \lambda^{3+\gain} K^{-1}\delta^{1-\gain+\epsilon}(\delta^3|\tubes|).
\end{split}
\end{equation}

Now suppose that \eqref{plainynessFails} holds. Replace the shading $Y(T)$ by $Y(T)\cap X_2$. Again, we can find a set $\tubes^\prime\subset\tubes$ with $|\tubes^\prime|\geq\frac{1}{4}|\tubes|$ and after refining $Y(T)$ by a factor of at most 8, we have $\frac{\lambda}{4}\leq |Y(T)|/|T|\leq\frac{\lambda}{2}$ for each $T\in\tubes$. Apply Corollary\ref{volumeTrilinearCor} (using the bound \eqref{alternateTrilinBound}) to $|\tubes^\prime|$. We have
\begin{equation}\label{trilinBound}
\Big|\bigcup_{T\in\tubes}Y(T)\Big|\gtrapprox \lambda^{3+1/4}K^{-1} \theta_0 \delta^{3/4}(\delta^3|\tubes|)=\lambda^{3+\gain}K^{-1} \delta^{1-\gain}(\delta^3|\tubes|).
\end{equation}

Since at least one of \eqref{planynessBd} or \eqref{trilinBound} must hold, we obtain Proposition \ref{mainPropQuantTrans}. This concludes the proof of Theorem \ref{mainThm}.

\section{Minimal conditions for Theorem \ref{mainThm}} \label{minimalConditions}
An examination of the proof of Proposition \ref{techVersion3lin} reveals that the polynomial Wolff axioms are only used in one place (Equation \eqref{boundOnN1}), and as discussed in Remark \ref{whereWolffAxiomsAreUsed}, \eqref{boundOnN1} holds provided the tubes satisfy a restricted version of the polynomial Wolff axioms. Indeed, we have the following variant of Proposition \ref{techVersion3lin}.

\begin{prop}\label{easierToSatisfyProp}
For all $\epsilon>0$, there exist constants $C_\epsilon, d(\epsilon)$ so that the following holds. Let $\tubes$ be a set of $\delta$ tubes in $\RR^4$. Suppose that for every integer $1\leq E\leq d(\epsilon),$ for every polynomial $P\in\RR[x_1,x_2,x_3,x_4]$ of degree $E$, for every ball $B(x,r)$ of radius $r$, and for every $w>0$, we have
\begin{equation}\label{fewTubesInHypersurfaces}
|\{T\in\tubes\colon T\cap B(x,r)\subset N_{10\delta}(Z)\}|\leq \tilde K_{E,w}\ r^{-1}\delta^{-2-w}.
\end{equation}
Then 
\begin{equation}\label{trilinearBoundFixedP}
\int\Big(\sum_{T_1,T_2,T_3\in\tubes}\chi_{T_1}\ \chi_{T_2}\ \chi_{T_3}\ |v_1\wedge v_2\wedge v_3|^{12/13}\Big)^{13/27} \leq C_{\epsilon}\delta^{-1/3-\epsilon}\tilde K^{1/9}(\delta^3|\tubes|)^{4/3},
\end{equation}
where in the above expression $v_i$ is the direction of the tube $T_i$, and $\tilde K=\tilde K_{\tubes, d(\epsilon)}=\sup_{1\leq E\leq d(\epsilon)}{\tilde K_E}$, where $\{\tilde K_E\}$ are the constants from \eqref{fewTubesInHypersurfaces}. 
\end{prop}

If we use Proposition \ref{easierToSatisfyProp} in place of Proposition \ref{techVersion3lin}, we obtain the following variant of Proposition \ref{mainPropQuantTrans}.

\begin{prop}
For all $\epsilon>0$, $\epsilon_0>0$ there exist constants $c_{\epsilon,\epsilon_0}>0, C^\prime_{\epsilon,\epsilon_0}$, and $d(\epsilon)$ so that the following holds. Let $\tubes$ be a set of $\delta$--tubes in $\RR^4$.  Suppose that $\tubes$ satisfies the linear Wolff axioms: for every rectangular prism $R$ of dimensions $1\times t_1\times t_2\times t_3$, we have that  $\lesssim t_1t_2t_3\delta^{-3}$ tubes from $\tubes$ can be contained in $R$. Suppose furthermore that for every integer $1\leq E\leq d(\epsilon),$ for every polynomial $P\in\RR[x_1,x_2,x_3,x_4]$ of degree $E$, for every ball $B(x,r)$ of radius $r$, and for every $w>0$, we have
$$
|\{T\in\tubes\colon T\cap B(x,r)\subset N_{10\delta}(Z)\}|\leq \tilde K_{E,w} \ r^{-1}\delta^{-2-w}.
$$

For each $T\in\tubes$, let $Y(T)\subset T$ with $\lambda\leq |Y(T)|/|T|\leq 2\lambda$. Suppose that $(\tubes,Y)$ is $s$--robustly transverse (with error $1/100$) and that each tube $T\in\tubes$ satisfies the two-ends condition with exponent $\epsilon_0$ and error $\alpha$. Then 
\begin{equation} \label{volumeBoundExplicitTrans}
\Big| \bigcup_{T\in\tubes}Y(T)\Big| \geq c_{s}c^\prime_{\epsilon,\epsilon_0}\alpha^{-C^\prime_{\epsilon,\epsilon_0}}\lambda^{3+\gain}\tilde K^{-1} \delta^{1-\gain+\epsilon}\big(\delta^3|\tubes|\big),
\end{equation}
where $\tilde K=\tilde K_{d(\epsilon),\tubes}=\sup_{1\leq E\leq d(\epsilon)}{\tilde K_E}$.
\end{prop}

Recall that Conjecture \ref{polyWolffConj} would imply a Kakeya maximal function estimate at dimension $3+\gain$. Proposition \ref{easierToSatisfyProp} says that an easier variant of Conjecture \ref{polyWolffConj} would also imply a Kakeya maximal function estimate at dimension $3+\gain$; rather than proving the full strength of Conjecture \ref{polyWolffConj}, it suffices to show that if $\tubes$ is a set of $\delta$ tubes in $\RR^4$ that point in $\delta$-separated directions, then for each integer $E\geq 1$, \eqref{fewTubesInHypersurfaces} holds for every polynomial $P\in\RR[x_1,x_2,x_3,x_4]$ of degree $E$.

In $\mathbb{F}_p^4$, the analogous statement would be that if $\mathcal{L}$ is a set of $p^3$ lines pointing in different directions, then any degree $D$ hypersurface contains $O_D(p^2)$ lines from $\mathcal{L}$. This is easy to prove: simply embed $\mathbb{F}_{p}^4$  into four-dimensional projective space and let $\hat Z$ be the corresponding hypersurface. If $\Pi$ is the hyperplane at infinity, then each line from $\mathcal{L}$ intersects $\Pi$ at a distinct point. Thus $|\mathcal{L}|\leq |\hat Z \cap \Pi|\leq Dp^2$. See \cite{T} for details. Unfortunately, it appears to be difficult to make a similar argument work in Euclidean space. We believe that this is a promising direction for future study.

\bibliographystyle{abbrv}
\bibliography{R4_Kakeya}

 \bigskip
  \footnotesize

\noindent L.~Guth, Department of Mathematics, Massachusetts Institute of Technology,
  77 Massachusetts Avenue, Cambridge, MA 02139-4307, USA.

  \medskip

\noindent  J.~Zahl, Department of Mathematics, University of British Columbia,
1984 Mathematics Road, Vancouver, BC, V6T 1Z2, Canada.

\medskip

\noindent MSC2010 classification: 42B25

\end{document}